\numberwithin{equation}{section}
\newtheorem{thrm}{Theorem}[section]
\newtheorem{prop}[thrm]{Proposition}
\newtheorem{crl}[thrm]{Corollary}
\theoremstyle{definition}
\newtheorem{defn}[thrm]{Definition}
\theoremstyle{remark}
\newcommand{\nc}{\newcommand}
\nc{\al}{\alpha}
\nc{\del}{\delta}
\nc{\eps}{\epsilon}
\nc{\ga}{\gamma}
\nc{\Ga}{\Gamma}
\nc{\ka}{\kappa}
\nc{\la}{\lambda}
\nc{\om}{\omega}
\nc{\si}{\sigma}
\nc{\Si}{\Sigma}
\nc{\bsi}{\boldsymbol\sigma}
\nc{\bSi}{\boldsymbol\Sigma}
\nc{\Ups}{\upsilon}
\nc{\vphi}{\varphi}
\nc{\btau}{\boldsymbol\tau}
\nc{\bdel}{\boldsymbol\delta}
\nc{\id}{\mathrm{id}}
\nc{\gr}{\mathrm{gr}}
\nc{\Ug}{U\mathfrak{g}}
\nc{\Ub}{U\mathfrak{b}}
\nc{\Hk}{\mathsf{H}}
\nc{\ombH}{\overline{\mathbf{H}}}
\nc{\ud}{\underline}
\nc{\tl}{\tilde}
\nc{\mbA}{\mathbf{A}}
\nc{\mbb}{\mathbf{b}}
\nc{\mbB}{\mathbf{B}}
\nc{\mbc}{\mathbf{c}}
\nc{\mbC}{\mathbf{C}}
\nc{\mbd}{\mathbf{d}}
\nc{\mbD}{\mathbf{D}}
\nc{\mbe}{\mathbf{e}}
\nc{\mbE}{\mathbf{E}}
\nc{\mbf}{\mathbf{f}}
\nc{\mbF}{\mathbf{F}}
\nc{\mbg}{\mathbf{g}}
\nc{\mbH}{\mathbf{H}}
\nc{\mbh}{\mathbf{h}}
\nc{\mbi}{\mathbf{i}}
\nc{\mbI}{\mathbf{I}}
\nc{\mbj}{\mathbf{j}}
\nc{\mbJ}{\mathbf{J}}
\nc{\mbk}{\mathbf{k}}
\nc{\mbK}{\mathbf{K}}
\nc{\mbL}{\mathbf{L}}
\nc{\mbM}{\mathbf{M}}
\nc{\mbQ}{\mathbf{Q}}
\nc{\mbq}{\mathbf{q}}
\nc{\mbr}{\mathbf{r}}
\nc{\mbT}{\mathbf{T}}
\nc{\mbu}{\mathbf{u}}
\nc{\mbU}{\mathbf{U}}
\nc{\mbv}{\mathbf{v}}
\nc{\mbV}{\mathbf{V}}
\nc{\mbw}{\mathbf{w}}
\nc{\mbW}{\mathbf{W}}
\nc{\mbX}{\mathbf{X}}
\nc{\mbY}{\mathbf{Y}}
\nc{\mbZ}{\mathbf{Z}}
\nc{\mbbA}{\mathbb{A}}
\nc{\mbbB}{\mathbb{B}}
\nc{\mbbD}{\mathbb{D}}
\nc{\mbbF}{\mathbb{F}}
\nc{\mbbV}{\mathbb{V}}
\nc{\mbbH}{\mathbb{H}}
\nc{\mbbK}{\mathbb{K}}
\nc{\mbbL}{\mathbb{L}}
\nc{\mbbP}{\mathbb{P}}
\nc{\mbbU}{\mathbb{U}}
\nc{\mcA}{\mathcal{A}}
\nc{\mcB}{\mathcal{B}}
\nc{\mcC}{\mathcal{C}}
\nc{\mcD}{\mathcal{D}}
\nc{\mcE}{\mathcal{E}}
\nc{\mcF}{\mathcal{F}}
\nc{\mcG}{\mathcal{G}}
\nc{\mcH}{\mathcal{H}}
\nc{\mcK}{\mathcal{K}}
\nc{\mcN}{\mathcal{N}}
\nc{\mcO}{\mathcal{O}}
\nc{\mcQ}{\mathcal{Q}}
\nc{\mcS}{\mathcal{S}}
\nc{\mcP}{\mathcal{P}}
\nc{\mcU}{\mathcal{U}}
\nc{\mcT}{\mathcal{T}}
\nc{\mcV}{\mathcal{V}}
\nc{\mcW}{\mathcal{W}}
\nc{\mcX}{\mathcal{X}}
\nc{\mcY}{\mathcal{Y}}
\nc{\mcZ}{\mathcal{Z}}
\nc{\mfa}{\mathfrak{a}}
\nc{\mfA}{\mathfrak{A}}
\nc{\mfb}{\mathfrak{b}}
\nc{\mfB}{\mathfrak{B}}
\nc{\mfC}{\mathfrak{C}}
\nc{\mfd}{\mathfrak{d}}
\nc{\mfD}{\mathfrak{D}}
\nc{\mfe}{\mathfrak{e}}
\nc{\mfE}{\mathfrak{E}}
\nc{\mff}{\mathfrak{f}}
\nc{\mfF}{\mathfrak{F}}
\nc{\mfg}{\mathfrak{g}}
\nc{\mfgl}{\mathfrak{g}\mathfrak{l}}
\nc{\mfh}{\mathfrak{h}}
\nc{\mfH}{\mathfrak{H}}
\nc{\mfJ}{\mathfrak{J}}
\nc{\mfk}{\mathfrak{k}}
\nc{\mfK}{\mathfrak{K}}
\nc{\mfl}{\mathfrak{l}}
\nc{\mfL}{\mathfrak{L}}
\nc{\mfM}{\mathfrak{M}}
\nc{\mfm}{\mathfrak{m}}
\nc{\mfn}{\mathfrak{n}}
\nc{\mfN}{\mathfrak{N}}
\nc{\mfo}{\mathfrak{o}}
\nc{\mfP}{\mathfrak{P}}
\nc{\mfQ}{\mathfrak{Q}}
\nc{\mfS}{\mathfrak{S}}
\nc{\mfsl}{\mathfrak{s}\mathfrak{l}}
\nc{\mfso}{\mathfrak{s}\mathfrak{o}}
\nc{\mfsp}{\mathfrak{s}\mathfrak{p}}
\nc{\mft}{\mathfrak{t}}
\nc{\mfU}{\mathfrak{U}}
\nc{\mfu}{\mathfrak{u}}
\nc{\mfUqsl}{\mathfrak{U}_q\mathfrak{sl}}
\nc{\mfUsl}{\mathfrak{Usl}}
\nc{\mfV}{\mathfrak{V}}
\nc{\mfX}{\mathfrak{X}}
\nc{\mfY}{\mathfrak{Y}}
\nc{\mfz}{\mathfrak{z}}
\nc{\mrmd}{\mathrm{d}}
\nc{\sal}{\check{\alpha}}
\nc{\cbeta}{\check{\beta}}
\nc{\cd}{\check{d}}
\nc{\cf}{\check{f}}
\nc{\cdelta}{\check{\delta}}
\nc{\ccr}{\check{r}}
\nc{\cs}{\check{s}}
\nc{\bv}{\breve{v}}
\nc{\tc}{\tilde{c}}
\nc{\tr}{\tilde{r}}
\nc{\ts}{\tilde{s}}
\nc{\tv}{\tilde{v}}
\nc{\msA}{\mathsf{A}}
\nc{\msB}{\mathsf{B}}
\nc{\msC}{\mathsf{C}}
\nc{\msc}{\mathsf{c}}
\nc{\msD}{\mathsf{D}}
\nc{\msd}{\mathsf{d}}
\nc{\mse}{\mathsf{e}}
\nc{\msw}{\mathsf{w}}
\nc{\msq}{\mathsf{q}}
\nc{\msg}{\mathsf{g}}
\nc{\msE}{\mathsf{E}}
\nc{\msf}{\mathsf{f}}
\nc{\msF}{\mathsf{F}}
\nc{\msh}{\mathsf{h}}
\nc{\msk}{\mathsf{k}}
\nc{\msH}{\mathsf{H}}
\nc{\msI}{\mathsf{I}}
\nc{\msJ}{\mathsf{J}}
\nc{\msK}{\mathsf{K}}
\nc{\msL}{\mathsf{L}}
\nc{\msP}{\mathsf{P}}
\nc{\msQ}{\mathsf{Q}}
\nc{\msR}{\mathsf{R}}
\nc{\mss}{\mathsf{s}}
\nc{\msS}{\mathsf{S}}
\nc{\msT}{\mathsf{T}}
\nc{\msU}{\mathsf{U}}
\nc{\msv}{\mathsf{v}}
\nc{\msV}{\mathsf{V}}
\nc{\msx}{\mathsf{x}}
\nc{\msX}{\mathsf{X}}
\nc{\msY}{\mathsf{Y}}
\nc{\msZ}{\mathsf{Z}}
\nc{\End}{\mathrm{End}}
\nc{\Ext}{\mathrm{Ext}}
\nc{\Hom}{\mathrm{Hom}}
\nc{\Ima}{\mathrm{Image}}
\nc{\Ind}{\mathrm{Ind}}
\nc{\Ker}{\mathrm{Ker}}
\nc{\RHom}{\mathrm{RHom}}
\nc{\Sym}{\mathrm{Sym}}
\nc{\mtc}{\mathtt{c}}
\nc{\mtD}{\mathtt{D}}
\nc{\mte}{\mathtt{e}}
\nc{\mtE}{\mathtt{E}}
\nc{\mtf}{\mathtt{f}}
\nc{\mtF}{\mathtt{F}}
\nc{\mth}{\mathtt{h}}
\nc{\mtH}{\mathtt{H}}
\nc{\mtV}{\mathtt{V}}
\nc{\mtX}{\mathtt{X}}
\nc{\mty}{\mathtt{y}}
\nc{\ddeg}{\mathtt{deg}}
\nc{\dimm}{\mathtt{dim}}
\nc{\lmod}{\mathtt{lmod}}
\nc{\opp}{\mathtt{opp}}
\nc{\rmod}{\mathtt{rmod}}
\nc{\mmod}{\mathrm{mod}}
\nc{\nbh}{\mathrm{nbh}}
\nc{\lrh}{\leftrightharpoons}
\nc{\iso}{\stackrel{\sim}{\longrightarrow}}
\nc{\liso}{\stackrel{\sim}{\longleftarrow}}
\nc{\wh}{\widehat}
\nc{\wt}{\widetilde}
\nc{\lra}{\longrightarrow}
\nc{\ra}{\rightarrow}
\nc{\into}{\hookrightarrow}
\nc{\onto}{\twoheadrightarrow}
\nc{\C}{\mathbb{C}}
\nc{\N}{\mathbb{N}}
\nc{\Z}{\mathbb{Z}}
\nc{\SW}{\mathsf{SW}}
\nc{\ot}{\otimes}
\nc{\op}{\oplus}
\nc{\lan}{\langle}
\nc{\ran}{\rangle}
\nc{\qu}{\quad}
\nc{\qq}{\qquad}
\nc{\mysum}{\textstyle\sum}
\nc{\mysuml}{\textstyle\sum\limits}
\nc{\Xp}{x_{j_1,\ldots,j_p}^{-1}}
\nc{\Xeta}{x_{\eta_1,\ldots,\eta_e}^{-1}}
\nc{\spl}[1]{\begin{align}\begin{split}#1\end{split}\end{align}}
\nc{\eqa}[1]{\begin{align}#1\end{align}}
\nc{\eqn}[1]{\begin{align*}#1\end{align*}}
\nc{\eq}[1]{\begin{equation}#1\end{equation}}
\nc{\red}{\color{red}}
\nc{\blu}{\color{blue}}
\nc{\br}{\color{Brown}}
\nc{\gre}{\color{green!50!black}}
\nc\el{\nonumber\\}
\nc\nn{\nonumber}
\nc\bb{\mathbb}
\nc\mf[1]{\mathfrak{#1}}
\nc\Tr{{\rm tr}}
\nc{\sm}[1]{\text{\tiny{\rm #1}}}
\nc\tdeg{\stackrel{\sim}{\smash{\deg}\rule{0pt}{1.1ex}}}
\renewcommand{\,}{\kern 0.12em} 
\begin{document}

\hfill DMUS-MP-16/03

\vspace{1.2cm}

\begin{center}
{\Large{\textbf{Twisted Yangians of small rank}}} 

\bigskip

Nicolas Guay$^{1a}$, Vidas Regelskis$^{2b}$, Curtis Wendlandt$^{1c}$

\end{center}

\bigskip

\begin{center} \small 
$^1$ University of Alberta, Department of Mathematics, CAB 632, Edmonton, AB T6G 2G1, Canada.\\ $^2$ University of Surrey, Department of Mathematics, Guildford, GU2 7XH,  UK. \\
\smallskip
E-mail: $^a$\,nguay@ualberta.ca $^b$\,v.regelskis@surrey.ac.uk $^c$\,cwendlan@ualberta.ca
\end{center}

\patchcmd{\abstract}{\normalsize}{}{}{}

\begin{abstract} \small 
We study quantized enveloping algebras called twisted Yangians associated with the symmetric pairs of types CI, BDI and DIII (in Cartan's classification) when the rank is small. We establish isomorphisms between these twisted Yangians and the well known Olshanskii's twisted Yangians of types AI and AII, and also with the Molev-Ragoucy reflection algebras associated with symmetric pairs of type AIII. We also construct isomorphisms with twisted Yangians in Drinfeld's original presentation. 
\end{abstract}

\makeatletter
\@setabstract
\makeatother

\medskip

\thispagestyle{empty}


\section{Introduction}

The twisted Yangians $Y^\pm(N)$ introduced by G. Olshanskii in \cite{Ol} are quantizations of the enveloping algebra of the twisted current Lie algebra $\mfgl_N[u]^\rho$ where $\rho$ is an involution of $\mfgl_N$ (see \cite{MNO} and reference therein). The cases when $N$ is small are already interesting: for instance, $Y^\pm(2)$  controls the branching multiplicities of Gelfand-Zsetlin bases for the orthogonal and symplectic Lie algebras \cite{Mo3} and it plays an important role in understanding the representation theory of $Y^\pm(N)$ \cite{Mo2}, in particular in the classification of its finite dimensional, irreducible representations. 

In a recent work by the first two authors \cite{GR}, a class of extended twisted Yangians $X(\mfg,\mcG)^{tw}$ for symmetric pairs of types B, C and D was introduced. These new extended twisted Yangians are deformations of the enveloping algebra of the twisted current Lie algebras $\mfso_N[u]^\rho$ and $\mfsp_N[u]^\rho$ (where $\rho$ is now an involution of either $\mfso_N$ or $\mfsp_N$), and can be realised as coideal subalgebras of the orthogonal and symplectic extended Yangians $X(\mfo_N)$ and $X(\mfsp_N)$ studied by Arnaudon {\it et.\@ al.} in \cite{AACFR,AMR}. 

The representation theory of the extended Yangians was investigated in \cite{AMR}; it relies on the existence of isomorphisms between the extended Yangians $X(\mfsp_2)$, $X(\mfso_3)$ and $X(\mfso_4)$ and the Yangians $Y(2)$ and $Y(2)\ot Y(2)$. These isomorphisms are naturally expected to hold because of the isomorphisms $\mfsp_2 \cong \mfsl_2 \cong \mfso_3$ and $\mfso_4 \cong \mfsl_2 \oplus \mfsl_2$. Since finite dimensional modules over $Y(2)$ have been classified for a long time \cite{Ta1,Ta2}, those isomorphisms immediately lead to a similar classification for finite dimensional, irreducible modules over the extended Yangians $X(\mfsp_2)$, $X(\mfso_3)$ and $X(\mfso_4)$. This is then used in \cite{AMR} as the base case of an induction argument to obtain a similar classification for $X(\mfso_N)$ and $X(\mfsp_N)$. 

In the present work, we investigate the existence of similar isomorphisms for the extended twisted Yangians of rank one or two that correspond to isomorphisms between symmetric pairs of small rank (see Chapter X in \cite{He}). Since finite dimensional irreducible representations of $Y^\pm(2)$ were classified in \cite{Mo1}, we immediately obtain a similar classification for $X(\mfg,\mcG)^{tw}$ with $\mfg=\mfsp_2,\mfso_3,\mfso_4$. This will play an important role in the study of the representation theory of extended Yangians of arbitrary rank in \cite{GRW} and is the main motivation behind the current paper. Another motivation for the present work is to answer a natural question: are some of the twisted Yangians introduced in \cite{GR} isomorphic to some of those that appeared earlier in the literature? We show that the answer is positive for certain classical Lie algebras of rank one or two.

The paper is organized as follows. Section 2 introduces the notation and recalls the basic facts about symmetric pairs of classical types and the reflection equation, whereas Section 3 recalls the definition of Yangians and twisted Yangians and some properties of their algebraic structure. The main section is the fourth one where isomorphisms between various twisted Yangians associated to symmetric pairs of semisimple Lie algebras of rank one or two are obtained. In the last section, these isomorphisms are made explicit with twisted Yangians defined using the generators in Drinfeld's original presentation of Yangians (see Definition \ref{JdefY2}). This presentation is commonly used in integrable field theories and we have included that section because it may be useful to theoretical physicists working in that area.

\smallskip
 
{\it Acknowledgements.} The first author acknowledges the support of the Natural Sciences and Engineering Research Council of Canada through its Discovery Grant program.  Part of this work was done during the second author's visits to the University of Alberta. V.R. thanks the University of Alberta for the hospitality, and also gratefully acknowledges the Engineering and Physical Sciences Research Council (EPSRC) of the United Kingdom for the Postdoctoral Fellowship under the grant EP/K031805/1.

\bigskip


\section{Reflection equation and symmetric pairs}


\subsection{Preliminaries} \label{Sec:2.1s}

Let $n \in\N$ and set $N=2n$ or $N=2n+1$. We will always assume that $\mfg=\mfgl_N$, $\mfg=\mfsl_N$ or $\mfg=\mfg_N$, where $\mfg_N$ is the orthogonal Lie algebra $\mfso_N$ or the symplectic Lie algebra $\mfsp_N$ (only when $N=2n$). The algebra $\mfg_N$ can be realized as a Lie subalgebra of $\mfgl_N$ as follows. We label the rows and columns of $\mfgl_N$ by the indices $\{ -n,\ldots,-1,1,\ldots,n\}$ if $N=2n$ and by $\{-n,\ldots,-1,0,1,\ldots,n \}$ if $N=2n+1$. Set $\theta_{ij}=1$ in the orthogonal case $\forall \, i,j$ and $\theta_{ij}=\mathrm{sign}(i)\cdot \mathrm{sign}(j)$ in the symplectic case for $i,j\in\{ \pm 1, \pm 2, \ldots, \pm n  \}$. Let $F_{ij}=E_{ij} - \theta_{ij} E_{-j,-i}$ where $E_{ij}$ is the usual elementary matrix of $\mfgl_N$. Then $\mfg_N = \mathrm{span}_{\C} \{ F_{ij} \, | \, -n\le i,j\le n \}$. These matrices satisfy the relations 
\eq{ \label{[F,F]}
F_{ij} + \theta_{ij}F_{-j,-i}=0, \qquad [F_{ij},F_{kl}] = \delta_{jk}F_{il} - \delta_{il}F_{kj} + \theta_{ij}\delta_{j,-l}F_{k,-i} - \theta_{ij}\delta_{i,-k}F_{-j,l}.
}
Next, we need to introduce some operators: $P \in \End(\C^N \ot\C^N)$ will denote the permutation operator on $\C^N \ot \C^N$ and $Q=P^{t_1}=P^{t_2}$ where the transpose $t$ is given by $(E_{ij})^t = \theta_{ij} E_{-j,-i}$; explicitly,
\eq{ \label{PQ}
P=\mysum_{i,j=-n}^n E_{ij} \ot E_{ji}, \qquad Q = \mysum_{i,j=-n}^n \theta_{ij} E_{ij} \ot E_{-i,-j} .
}
We will denote the transpose when $\theta_{ij}=1$ by $t_+$ and the one when $\theta_{ij} = \mathrm{sign}(i)\cdot \mathrm{sign}(j)$ by $t_-$. Let $I$ denote the identity matrix. Then $P^2=I$ and $PQ=QP=\pm Q$ and $Q^2=N Q$, which will be useful below. Here (and further in this paper) the upper sign corresponds to the orthogonal case and the lower sign to the symplectic case.


\subsection{Symmetric pairs of classical Lie algebras and isomorphisms of small rank} \label{Sec:2.2}

The symmetric pairs we are interested in are of the form $(\mfg,\mfg^{\rho})$, where $\rho$ is an involution of $\mfg$ and $\mfg^\rho$ denotes the $\rho$-fixed subalgebra of $\mfg$ given by $\mfg^\rho = \{ X \in \mfg \,|\, X = \mcG X \mcG^{-1} \}$, except that $\mfg^\rho = \{ X \in \mfg \,|\, X = -X^t  \}$ for types AI and AII. Here, $\mcG$ is a certain matrix. All this data is listed below according to the type of the symmetric pair: 
\begin{itemize} [itemsep=.25ex]

\item AI{\phantom{IIII}} : $\mfg=\mfsl_N$ and $\mfg^\rho=\mfso_N$ with $t=t_+$; 

\item AII{\phantom{III}} : $\mfg=\mfsl_N$ and $\mfg^\rho=\mfsp_N$ with $t=t_-$;

\item AIII{\phantom{II}} : $\mfg=\mfsl_N$ and $\mfg^\rho=(\mfgl_p \op \mfgl_q)\cap \mfsl_N$ with $\mcG=\mysum_{i=1}^{p} E_{ii} - \mysum_{i=p+1}^N E_{ii}$; 

\item CI{\phantom{IIII}} : $N$ is even, $\mfg=\mfsp_N$, $\mcG=\mysum_{i=1}^{\frac{N}{2}} (E_{ii} - E_{-i,-i})$ and  $\mfg^{\rho} \cong \mfgl_{\frac{N}{2}}$;

\item DIII{\phantom{II}} : $N$ is even, $\mfg=\mfso_N$, $\mcG=\mysum_{i=1}^{\frac{N}{2}} (E_{ii} - E_{-i,-i})$ and  $\mfg^{\rho} \cong \mfgl_{\frac{N}{2}}$;

\item CII{\phantom{IIII}}:  $N$, $p$ and $q$ are even and $>0$, $N=p+q$, $\mfg=\mfsp_N$, 
\[
\textstyle \mcG= -\mysum_{i=1}^{\frac{q}{2}} (E_{ii} + E_{-i,-i}) + \mysum_{i=\frac{q}{2}+1}^{\frac{N}{2}} (E_{ii} + E_{-i,-i}) 
\]
and $\mfg^{\rho}= \mfsp_p \op \mfsp_q$. More precisely, the subalgebra of $\mfg^{\rho}$ spanned by $F_{ij}$ with $-\frac{q}{2} \le i,j \le \frac{q}{2}$ is isomorphic to $\mfsp_q$ and the subalgebra of $\mfg^{\rho}$ spanned by $F_{ij}$ with $|i|,|j| > \frac{q}{2}$ is isomorphic to $\mfsp_p$;

\item BDI{\phantom{II}} : $\mfg=\mfso_N$, $\mfg^{\rho}= \mfso_p \op \mfso_q$ where $p>q>0$ if $N$ is odd, and $p\geq q>0$ if $N$ is even (if $q=1$, then $\mfso_1$ is the zero Lie algebra), and $\mcG$ is
$$
\mcG=\begin{cases}
\mysum_{i=1}^{\frac{p-q}{2}} (E_{ii} + E_{-i,-i}) + \mysum_{i=\frac{p-q}{2}+1}^{\frac{N}{2} } (E_{-i,i} + E_{i,-i}) \quad \text{ for $N$ even,} \\[.5em]
\mysum_{i=-\frac{p-q-1}{2}}^{\frac{p-q-1}{2}} E_{ii}
 + \mysum_{i=\frac{p-q+1}{2}}^{\frac{N-1}{2}} (E_{-i,i} + E_{i,-i}) \quad\text{ for $N$ odd.}
 \end{cases}
$$
\end{itemize}
In addition, we will consider trivial symmetric pairs:
\begin{itemize}
\item ABCD0: $\mcG=I_N$, $\rho=\mathrm{id}$ and $\mfg^{\rho} = \mfg$.
\end{itemize}
These can be thought of as special cases of types AIII, CII and BDI when $p=N$ and $q=0$. For ease of notation, we will further refer to types CI, DIII, CII, BDI and BCD0 collectively as types B--C--D. We will always consider type A0 as a special case of type AIII.

\smallskip

\noindent When $n=1$ or $n=2$, we have the following well-known isomorphisms between classical Lie algebras, listed with their Dynkin type:
\begin{itemize} [itemsep=.5ex]

\item A($N=1$) = D($n=1$) \hspace{1.89cm} : $\mf{gl}_1 \cong \mf{so}_2$,

\item A($N=2$) = B($n=1$) = C($n=1$)\hspace{.14cm} : $\mf{sl}_2 \cong \mf{so}_3 \cong \mf{sp}_2$, 

\item D($n=2$) = A($N=2$) $\times$ A($N=2$) : $\mf{so}_4 \cong \mf{sl}_2 \op \mf{sl}_2$.

\end{itemize}
These lead to the following isomorphisms of symmetric pairs (see \cite[Section X.6]{He}):
\begin{itemize} 

\item AI($N=2$) = AIII($p=q=1$) = BDI ($p=2,q=1$) = CI($n=1$):
\eq{ \label{iso:1}
(\mf{sl}_2,\mf{so}_2 )\cong (\mf{sl}_2,\mf{gl}_1 ) \cong (\mf{so}_3,\mf{so}_2) \cong (\mf{sp}_2,\mf{gl}_1),
}

\item BDI($p=2, q=2$) = AI($N=2$) $\times$ AI($N=2$):
\eq{ \label{iso:2}
(\mf{so}_4,\mf{so}_2\op \mf{so}_2) \cong (\mf{sl}_2,\mf{so}_2) \oplus (\mf{sl}_2,\mf{so}_2),
}

\item DIII($n=2$) = AII($N=2$) $\times$ AI($N=2$):
\eq{ \label{iso:3}
(\mf{so}_4,\mf{gl}_2) \cong (\mf{sl}_2,\mf{sp}_2) \oplus (\mf{sl}_2,\mf{gl}_1),
}

\item BDI($p=3, q=1$) = A$^{\Delta}$($N=2$):
\eq{ \label{iso:4}
(\mf{so}_4,\mf{so}_3) \cong (\mf{sl}_2 \op \mf{sl}_2 , \Delta\mf{sl}_2),
}
where by A$^{\Delta}$ we mean the diagonal symmetric pair defined as follows: the Lie algebra $\mfsl_N\op\mfsl_N$ admits the involution $\rho$ given by $\rho(X_1,X_2) \mapsto (X_2,X_1)$ and $(\mfsl_N\op\mfsl_N)^{\rho} = \{ (X_1,X_2) \in \mfsl_N\op\mfsl_N \, | \, X_1 = X_2\}$; this fixed-point subalgebra is denoted $\Delta \mfsl_N$.

\end{itemize}

The isomorphisms $\mfso_5 \cong \mfsp_4$ and $\mfso_6 \cong \mfsl_4$ also lead to isomorphisms between certain symmetric pairs. It is natural to conjecture that there are also isomorphisms between corresponding twisted Yangians, but we do not explore this question here.


\subsection{R-matrices, K-matrices and the reflection equation} \label{Sec:2.3}

The matrices $R(u)\in\End(\C^N\ot \C^N)[[u^{-1}]]$ (all endomorphisms are over $\C$, unless specified otherwise) that we will need are defined by \cite{MNO,AACFR}:
\eq{
a) \qu R(u) = I - \frac{P}{u} \qu\text{for}\qu \mfgl_N, \qq b)\qu R(u) = I - \frac{P}{u} + \frac{Q}{u-\ka} \qu\text{for}\qu \mfg_N,  \label{R(u)}
}
where $\ka=N/2\mp 1$ for $\mfg=\mfg_N$; we also set $\ka=0$ for $\mfg=\mfgl_N$. These $R$-matrices are solutions of the quantum Yang-Baxter equation with spectral parameters,
\eq{ \label{YBE}
R_{12}(u)\, R_{13}(u+v)\, R_{23}(v) = R_{23}(v)\, R_{13}(u+v)\, R_{12}(u).
} 

We introduce $K$-matrices $K(u)\in \End(\C^N)[[u^{-1}]]$ by (see \cite{GR,MNO,MR})
\begin{itemize}
\item  $K(u)=I$ for symmetric pairs of types AI, AII;
\item $K(u)=\mcG$ for symmetric pairs of types AIII, BCD0, CI, DIII and DI, CII when $p=q$;
\item $K(u) = (I-c\,u\, \mcG)(1-c\,u)^{-1}$ with $c=\tfrac{4}{p-q}$ for symmetric pairs of types BDI, CII when $p>q$. 
\end{itemize}
The matrices $K(u)$ of types AI and AII are (scalar) solutions of the twisted reflection equation 
\eq{ \label{REt}
R(u-v)\,K_1(u)\,R(-u-v)^{t}\,K_2(v) = K_2(v)\,R(-u-v)^{t}\,K_1(u)\,R(u-v) ,
}
where $R(u)$ is given by a) in \eqref{R(u)}. In all other cases, $K(u)$ is a solution of the reflection equation 
\eq{ \label{RE}
R(u-v)\,K_1(u)\,R(u+v)\,K_2(v) = K_2(v)\,R(u+v)\,K_1(u)\,R(u-v) ,
}
with $R(u)$ given by a) in \eqref{R(u)} for type AIII, or by b) in \eqref{R(u)} for all the B--C--D types.


\section{Yangians and twisted Yangians}

In this section, we briefly summarize the results presented in \cite{MNO,MR} for Yangians and twisted Yangians of type A, and in \cite{AACFR,AMR,GR} for types B, C and D.


\subsection{Yangians and Extended Yangians} \label{Sec:3.1}

\begin{defn} \label{D:X(g)}
The algebra $X(\mfg)$ is the associative $\C$-algebra with generators $t_{ij}^{(r)}$ for $-n\le i,j\le n$ and $r\in\Z_{\ge 0}$, which satisfy the following relations: 
\eq{ \label{RTT}
R(u-v)\,T_1(u)\,T_2(v) = T_2(v)\,T_1(u)\,R(u-v), 
}
where $T(u)$ are the elements of $\End(\C^N) \ot X(\mfg)[[u^{-1}]] $ with matrix entries given by the formal power series 
\eq{
t_{ij}(u) = \mysum_{r=0}^{\infty} t_{ij}^{(r)}\, u^{-r} \in X(\mfg)[[u^{-1}]], \qq t_{ij}^{(0)} = \delta_{ij}.
}
\end{defn}
\noindent If $R(u)$ is given by a) of \eqref{R(u)}, then the algebra $X(\mfg)$ is the Yangian $Y(N)$ of $\mfgl_N$ \cite{Dr1,MNO}. Otherwise, if $R(u)$ is given by b) of \eqref{R(u)}, then the algebra $X(\mfg)$ is the extended Yangian $X(\mfg_N)$ \cite{AACFR,AMR}. \smallskip

Let us collect some important facts about $X(\mfg)$ and its centre: 

\begin{itemize}

\item Let $\mfg=\mfgl_N$. Then $X(\mfgl_N)=Y(N)$ and there exists a series, called quantum determinant
\eq{
{\rm qdet}\, T(u) = 1 + \mysum_{i\ge1} d_i u^{-i}\in Y(N)[[u^{-1}]] \label{qdetglN}
}
with coefficients central in $Y(N)$. Moreover, they are algebraically independent and generate the whole centre $Z(N)$ of $Y(N)$. The quotient algebra $Y(N)/({\rm qdet}\, T(u)-1)$ is isomorphic to the special Yangian $SY(N)$ associated with the Lie algebra $\mfsl_N$. 

\smallskip

\item Let $\mfg=\mfg_N$. Then the $T$-matrix $T(u)$ of $X(\mfg_N)$ satisfies 
\eq{
T(u+\ka)^{t}\,T(u) = T(u)\,T(u+\ka)^t = z(u)\cdot I, \label{TTtz(u)}
}
where the series $z(u)=1+\mysum_{i\geq1} z_i\,u^{-i}\in X(\mfg_N)[[u^{-1}]]$ is called the quantum contraction of $T(u)$. Its coefficients $z_i$ are algebraically independent and generate the whole centre $Z(\mfg_N)$ of $X(\mfg_N)$.  The quotient algebra $X(\mfg_N)/(z(u)-1)$ is isomorphic to the usual Yangian $Y(\mfg_N)$ of $\mfg_N$.
\end{itemize}

By employing the uniform notation
\eq{
(X(\mfg), \;Z(\mfg), \; Y(\mfg)) = \begin{cases} 
(Y(N), \hspace{.26cm} Z(N), \hspace{.21cm}  SY(N) ) & \text{for } \mfg=\mfgl_N, \\
(X(\mfg_N), \; Z(\mfg_N), \; Y(\mfg_N) ) & \text{for } \mfg=\mfg_N ,
\end{cases}
}
the following isomorphism of algebras holds
\eq{
X(\mfg) \cong Z(\mfg) \ot Y(\mfg) . \label{X=Z*Y}
}
We will denote the $T$-matrix of $Y(\mfg)$ by $\mcT(u)$. Its matrix entries will be denoted by $\tau_{ij}(u)$.


\subsection{Twisted Yangians as subalgebras of Yangians and extended Yangians} \label{Sec:3.2}

\begin{defn} \label{D:X(g,G):tw}
The algebra $X(\mfg,\mcG)^{tw}$ is the subalgebra of $X(\mfg)$ generated by the coefficients of the entries $s_{ij}(u)$ of the $S$-matrix $S(u)$ given by:
\eq{ \label{S=TGT}
S(u) = \begin{cases}
T(u)\,K(u)\,T(-u)^{-1}  &  \text{for type AIII} ,\\
T(u-\ka/2)\,K(u)\,T(-u+\ka/2)^t & \text{for types AI, AII $(\ka=0)$ and B--C--D $(\ka=\tfrac{N}2\mp1)$} ,
\end{cases}
}
with $T(u)\in X(\mfg)[[u^{-1}]]$ and $K(u)$ as in Section \ref{Sec:2.3}. 
\end{defn}

\begin{defn} \label{D:Y(g,G):tw}
The algebra $Y(\mfg,\mcG)^{tw}$ is the subalgebra of $Y(\mfg)$ generated by the coefficients of the entries $\si_{ij}(u)$ of the $S$-matrix $\Si(u)$ obtained from \eqref{S=TGT} by replacing $T(u)$ with $\mcT(u)$, which is equivalent to saying that
\eq{
Y(\mfg,\mcG)^{tw} = X(\mfg,\mcG)^{tw} \cap Y(\mfg) .
}
\end{defn}

\noindent The algebra $X(\mfg,\mcG)^{tw}$ (resp.\@ $Y(\mfg,\mcG)^{tw}$) of types AI and AII is the Olshanskii's twisted Yangian $Y^\pm(N)$ (resp.\@ $SY^\pm(N)$) (see \cite{Ol,MNO,Mo5}); of type AIII it is the Molev-Ragoucy reflection algebra $\mcB(N,q)$ (resp.\@ $\mcS\mcB(N,q)$) \cite{MR}; in all the remaining cases it is the extended twisted Yangian $X(\mfg_N,\mcG)^{tw}$ (resp. the twisted Yangian $Y(\mfg_N,\mcG)^{tw}$) introduced by the first two authors in \cite{GR}. \smallskip

We now briefly recall important facts about the algebras $X(\mfg,\mcG)^{tw}$ and $Y(\mfg,\mcG)^{tw}$ and their centres:

\begin{itemize}

\item Let $X(\mfg_N,\mcG)^{tw}$ be of types B--C--D. Then the $S$-matrix $S(u)$ satisfies the reflection equation \eqref{RE} with $R(u)$ given by b) in \eqref{R(u)} and $K(u)$ replaced with $S(u)$. It also satisfies the so-called symmetry relation
\eq{
S(u)^t = (\pm)\,S(\ka-u) \pm \frac{S(u)-S(\ka-u)}{2u-\ka} + \frac{\Tr(K(u))\,S(\ka-u) - \Tr(S(u))\cdot I }{2u-2\ka} \,. \label{RES}
}
The sign in $(\pm)$ is always `+' except it is `$-$' for types CI and DIII. The unitarity relation defines an even series $w(u)$, namely
\eq{
S(u)\, S(-u) = w(u)\cdot I\qu \Longrightarrow \qu w(u) = 1 + \mysum_{i\ge1} w_i u^{-i}\in X(\mfg_N,\mcG)^{tw}[[u^{-1}]] \label{Suwu}
}
with even coefficients algebraically independent and generating the centre $Z(\mfg_N,\mcG)^{tw}$ of $X(\mfg_N,\mcG)^{tw}$. The quotient algebra $X(\mfg_N,\mcG)^{tw}/(w(u)-1)$ is isomorphic to the twisted Yangian $Y(\mfg_N,\mcG)^{tw}$. 

\smallskip

\item Let $X(\mfg,\mcG)^{tw}$ be of type AI or AII, namely the algebra $Y^\pm(N)$. Then the $S$-matrix satisfies the twisted reflection equation \eqref{REt} with $R(u)$ given by a) in \eqref{R(u)} and $K(u)$ replaced with $S(u)$. It also satisfies the symmetry relation \eqref{RES} with $\ka=0$ and only the first two terms on the right-hand side. Moreover, there exists a series called the Sklyanin determinant
\eq{
{\rm sdet}\,S(u) = 1 + \mysum_{i\ge 1} c_i u^{-1} \in Y^\pm(N)[[u^{-1}]] , \label{sdetAIAII}
}
the even coefficients of which are algebraically independent and generate the centre $Z^\pm(N)$ of $Y^\pm(N)$. Introduce a rational function $\ga_N(u)$ by 
\eq{
\ga_N(u) = 1 \qu\text{for}\qu Y^+(N) \qu\text{and}\qu  \ga_N(u)=\frac{2u+1}{2u-N+1} \qu\text{for}\qu Y^-(N). \label{gamma(u)}
}
Then the quotient algebra 
\eq{ 
Y^\pm(N)/({\rm sdet}\,S(u)-\ga_N(u)) \label{quotAIAII}
}
is isomorphic to the special twisted Yangian $SY^\pm(N)$. (See \cite[Corollary 2.9.3]{Mo5}.)

\smallskip

\item Let $X(\mfg,\mcG)^{tw}$ be of type AIII, namely the algebra $\mcB(N,q)$. Then the $S$-matrix satisfies the reflection equation \eqref{RE} with $R(u)$ given by a) in \eqref{R(u)} and $K(u)$ replaced with $S(u)$, and the unitarity relation $S(u)\,S(-u)=I$. The Sklyanin determinant is given by the series
\eq{
{\rm sdet}\,S(u)=(-1)^q + \mysum_{i\ge 1} c_i u^{-i} \in \mcB(N,q)[[u^{-1}]] , \label{sdetAIII}
}
the odd coefficients of which are algebraically independent and generate the centre $\mcZ(N,q)$ of $\mcB(N,q)$. Introduce the series
\eq{
\theta(u) = (-1)^q \prod_{1\le i \le q} 2(u-N+i) \prod_{1\le i \le N-q} 2(u-N+i) \prod\limits_{1\le i\le N} \frac{1}{2u-2N+i+1}. \label{theta(u)}
} 
Then the quotient algebra 
\eq{ 
\mcB(N,q)/({\rm sdet}\,S(u) - \theta(u)) \label{quotAIII}
} 
is isomorphic to the special reflection algebra $\mcS\mcB(N,q)$.

\end{itemize}

By employing the uniform notation
\eq{
(X(\mfg,\mcG)^{tw}, \, Z(\mfg,\mcG)^{tw}, \, Y(\mfg,\mcG)^{tw} ) = 
\begin{cases} 
(Y^\pm(N), \hspace{2em} Z^\pm(N), \hspace{1.85em} SY^\pm(N) ) & \text{for types AI and AII} , \\
(\mcB(N,q), \hspace{1.78em} \mcZ(N,q), \hspace{1.65em}  \mcS\mcB(N,q) ) & \text{for type AIII} , \\
(X(\mfg_N,\mcG)^{tw}, \, Z(\mfg_N,\mcG)^{tw}, \, Y(\mfg_N,\mcG)^{tw} )\!\! & \text{for types B--C--D} ,
\end{cases} \hspace{-.5em}
}
the following isomorphism of algebras holds
\eq{
X(\mfg,\mcG)^{tw} \cong Z(\mfg,\mcG)^{tw} \ot Y(\mfg,\mcG)^{tw} . \label{TY=TZ*TSY}
}
Moreover, the algebra $Y(\mfg,\mcG)^{tw}$ has no non-trivial central elements.


\subsection{Twisted Yangians as quotients of extended reflection algebras} \label{Sec:3.3}

\begin{defn}
The extended reflection algebra $\wt{X}(\mfg,\mcG)^{tw}$ is the unital associative $\C$-algebra generated by elements $\wt s_{ij}^{(r)}$ for $-n\le i,j\le n$ and $r\in\Z_{\ge 0}$ satisfying the twisted reflection equation 
\eq{ \label{XtRE}
R(u-v)\,\wt S_1(u)\,R(-u-v)^{t}\,\wt S_2(v) = \wt S_2(v)\,R(-u-v)^{t}\,\wt S_1(u)\,R(u-v) ,
}
with $R(u)$ given by a) of \eqref{R(u)} for types AI and AII, and the reflection equation
\eq{ \label{XRE}
R(u-v)\,\wt S_1(u)\,R(u+v)\,\wt S_2(v) = \wt S_2(v)\,R(u+v)\,\wt S_1(u)\,R(u-v) ,
}
with $R(u)$ given by a) of \eqref{R(u)} for type AIII, and by b) of \eqref{R(u)} for types B--C--D. The $S$-matrix $\wt S(u)$ is defined in the usual way: $\wt S(u) = \mysum_{i,j=-n}^n \mysum_{r=0}^{\infty} E_{ij} \ot \wt s_{ij}^{(r)} u^{-r}$ with $\wt s_{ij}^{(0)} = \del_{ij}$ for types AI, AII and $\wt s_{ij}^{(0)} = g_{ij}$ for all the remaining types.
\end{defn}

\noindent The algebra $\wt{X}(\mfg,\mcG)^{tw}$ of types AI and AII is the Olshanskii's extended twisted Yangian $\wt Y^\pm(N)$ (see \cite[Section 6]{MNO} and \cite[Section 2.13]{Mo5}), of type AIII it is the Molev-Ragoucy extended reflection algebra $\wt\mcB(N,q)$ \cite{MR}, in all the remaining cases it is the extended reflection algebra $\mcX\mcB(\mcG)$ introduced in \cite{GR}. 

\smallskip

Following the same strategy as we used in the sections above, we recall some important facts about the extended reflection algebra $\wt{X}(\mfg,\mcG)^{tw}$ and its central elements:

\begin{itemize}

\item Let $\wt{X}(\mfg,\mcG)^{tw}$ be of types B--C--D, namely the algebra $\mcX\mcB(\mcG)$. Then there exists a formal power series $\msc(u) = 1 + \sum_{i\ge 1} \msc_i u^{-i}\in\mcX\mcB(\mcG)[[u^{-1}]]$ such that 
\eq{ \label{p(u)c(u)}
Q \,\wt S_1(u)\, R(2u-\ka)\, \wt S_2(\ka-u)^{-1} = \wt S_2(\ka-u)^{-1}R(2u-\ka)\, \wt S_1(u)\, Q = p(u)\, \msc(u)\, Q 
}
with the $R$-matrix given by b) in \eqref{R(u)} and
\eq{ \label{p(u)}
p(u) = (\pm)1 \mp \frac{1}{2u-\ka} + \frac{{\rm tr}(K(u))}{2u-2\ka} .
}
It can be checked that $p(u)\, p(\kappa-u) = 1- (2u-\kappa)^{-2}$. The coefficients $\msc_1, \msc_3, \ldots$ of the series $\msc(u)$ are algebraically independent and central in $\mcX\mcB(\mcG)$. Moreover, the constraint $\msc(u)=1$ is equivalent to the symmetry relation \eqref{RES} with $S(u)$ replaced by $\wt S(u)$. The quotient algebra $\mcX\mcB(\mcG)/(\msc(u)-1)$ is thus isomorphic to the twisted Yangian $X(\mfg_N,\mcG)^{tw}$: see \cite{GR}.

\smallskip

\item Let $\wt{X}(\mfg,\mcG)^{tw}$ be of type AI or AII, namely the algebra $\wt Y^\pm(N)$. Then all that was said in the paragraph above is also true, except that the $R$-matrix is given by a) in \eqref{R(u)}, $p(u)$ given by \eqref{p(u)} with $\ka=0$ and the last term set to zero, and the same for \eqref{RES}. The quotient algebra $\wt Y^\pm(N)/(\msc(u)-1)$ is isomorphic to the twisted Yangian $Y^\pm(N)$: see \cite{MNO,Mo5}.

\smallskip

\item Let $\wt{X}(\mfg,\mcG)^{tw}$ be of type AIII, namely the algebra $\wt \mcB(N,q)$. Then the unitarity relation gives 
\eq{
\wt S(u)\,\wt S(-u)=f(u)\cdot I, \label{SS=fI}
}
where $f(u) = 1 + \sum_{i\ge 1} f_i u^{-i}\in\wt \mcB(N,q)[[u^{-1}]]$ is an even series with coefficients algebraically independent and central in $\wt \mcB(N,q)$. The quotient algebra $\wt \mcB(N,q)/(f(u)-1)$ is isomorphic to the reflection algebra $\mcB(N,q)$: see \cite{MR}.

\end{itemize}

Let $\wt{Z}(\mfg,\mcG)^{tw}$ denote the commutative subalgebra of $\wt{X}(\mfg,\mcG)^{tw}$ generated by the coefficients of the even series $f(u)$ for type AIII, or by coefficients of the series $\msc(u)$ for all the remaining cases. Then the following isomorphism holds:
\eq{
\wt{X}(\mfg,\mcG)^{tw} \cong \wt Z(\mfg,\mcG)^{tw} \ot X(\mfg,\mcG)^{tw}  \cong \wt Z(\mfg,\mcG)^{tw} \ot Z(\mfg,\mcG)^{tw} \ot Y(\mfg,\mcG)^{tw} . \label{TX=TZX*TZ*TSY}
}
In particular, $\wt Z(\mfg,\mcG)^{tw} \ot Z(\mfg,\mcG)^{tw}$ is the whole centre of $\wt{X}(\mfg,\mcG)^{tw}$. The first isomorphism was not shown for types AI and AII in \cite{MNO} and for type AIII in \cite{MR}, but can be proven in the same way as in \cite{GR} for types B--C--D and as in the proof of Theorem 3.1 in \cite{AMR}. The second isomorphism follows from the first one and \eqref{TY=TZ*TSY}.


\section{Isomorphisms for small rank cases} \label{Sec:6}


\subsection{Yangians and extended Yangians of small rank} \label{Sec:4.1}

Isomorphisms between the Yangians of type A and those of types B, C, D of small rank, when $n=1$ and $n=2$, were found in \cite[Section 4]{AMR}. We briefly recall the main properties of these isomorphisms. 

\smallskip

Let $T(u)$ (resp.\@ $\mcT(u)$) denote the $T$-matrix of the extended Yangian $X(\mfg_N)$ (resp.\@ $Y(\mfg_N)$). Likewise, let $T^\circ(u)$ (resp.\ $\mcT^\circ(u)$) denote the $T$-matrix of the Yangian $Y(N)$ (resp.\ $SY(N)$). Then the maps
\eqa{
& \psi_1 \;:\; X(\mfsp_2) \to Y(2),\quad T(u) \mapsto T^{\circ}(u/2) , \label{X->Y:1} \\  
& \psi_2 \;:\; X(\mfso_3) \to Y(2),\quad T(u) \mapsto \tfrac{1}{2}R^\circ(-1)\, T^{\circ}_1(2u)\,T^{\circ}_2(2u+1) , \label{X->Y:2}
}
where $R^\circ(-1)=(I+P)$, define isomorphisms of the algebras. (See \cite[Section 4.2]{AMR} for the meaning of $T^{\circ}_1(2u)\,T^{\circ}_2(2u+1)$.) The map $\psi_1$ restricts to an isomorphism between $Y(\mfsp_2)$ and $SY(2)$ when these are viewed as subalgebras of $X(\mfsp_2)$ and $Y(2)$: see Corollary 4.2 in \textit{loc.~cit.} It also descends to an isomorphism between $Y(\mfsp_2)$ and $SY(2)$ when $Y(\mfsp_2)$ (resp.~$SY(2)$) is viewed as the quotient of $X(\mfsp_2)$ (resp.~$Y(2)$) by the ideal generated by the non-constant coefficients of the central series $z(u)$ - see \eqref{TTtz(u)} (resp.~of the quantum determinant ${\rm qdet}\, T^{\circ}(u)$ - see \eqref{qdetglN}). Indeed, starting from \eqref{TTtz(u)} and applying $\psi_1$ to it, we obtain (using \cite[Proposition 2.7]{MNO}): 
\spl{ 
\psi_1(z(u)\cdot I) = {} & T^{\circ}(u/2) \, T^{\circ }(u/2+1)^t \\ = {} & (t^{\circ}_{-1,-1}(u/2) \, t^{\circ}_{11}(u/2+1) - t^{\circ}_{-1,1}(u/2) \, t^{\circ}_{1,-1}(u/2+1) )\cdot I = {\rm qdet} \, T^{\circ}(u/2+1)\cdot  I, \label{psi1:z(u)} 
}
which implies that $\psi_1$ restricts to an isomorphism between the ideal of $X(\mfsp_2)$ generated by the non-constant coefficients of $z(u)$ and the ideal of $Y(2)$ generated by the non-constant coefficients of ${\rm qdet}\, T^{\circ}(u)$. Therefore, $\psi_1$ descends to an isomorphism between $Y(\mfsp_2)$ ($\cong X(\mfsp_2)/(z(u)-1)$) and $SY(2)$ ($\cong Y(2)/({\rm qdet}\, T^{\circ}(u)-1)$). 

It will also be useful to know what is $\psi_1(w(u))$ (see \eqref{Suwu}) in the case of $X(\mfsp_2)$. The series $w(u)$ is related to $z(u)$ via $w(u) = z(-u-1) \, z(u-1) $ (see \cite{GR}), hence 
\spl{
\psi_1(w(u)) &= \psi_1(z(-u-1)) \, \psi_1(z(u-1)) \\
&= {\rm qdet}\, T^{\circ}(-u/2+1/2) \, {\rm qdet}\, T^{\circ}(u/2+1/2) \qu\text{ by } \eqref{psi1:z(u)} \\
&= \left\lbrace \begin{array}{c} {\rm sdet}\, S^{\circ}(u/2+1/2) \text{ in } Y^+(2) \\[.25em] 
\dfrac{u}{u+2} \, {\rm sdet}\, S^{\circ}(u/2+1/2) \text{ in } Y^-(2) \end{array} \right. \hspace{.5cm}\text{ by } \eqref{Ols:qdet->sdet}. \label{psi1wu}
}

Let $\wh{\psi}_2$ be the composite of $\psi_2$ with the shift automorphism of $Y(2)$ given by $T^{\circ}(u) \mapsto T^{\circ}(u-1/2)$. Let us perform similar computations for $\wh{\psi}_2(z(u))$ and $\wh{\psi}_2(w(u))$. We need to know what is $\psi_2(T(u)^{t_+})$: it can be seen from \cite{AMR} that 
\eq{ 
\psi_2(T(u)^{t_+}) = \tfrac{1}{2} R^{\circ}(-1)\, T_1^{\circ}(2u)^{t_-} T^{\circ}_2(2u+1)^{t_-}. \label{psi2Tt} 
}   
That equals also $\frac{1}{2} R^{\circ}(-1)\,T^{\circ}_1(2u+1)^{t_-} T^{\circ}_2(2u)^{t_-}$. In the computations below, we will use the identities
$$
\left( \tfrac{1}{2} R^{\circ}(-1)\right)^2 = \tfrac{1}{2} R^{\circ}(-1), \qu \tfrac{1}{2} R^{\circ}(-1)\, R^{\circ}(2)^{t_-} = \tfrac{1}{2} R^{\circ}(-1)
$$ 
and the following RTT-relations
\eqn{
& \tfrac{1}{2} R^{\circ}(-1) \,  T^{\circ}_1(2u+1/2) \,  T^{\circ}_2(2u-1/2) = \tfrac{1}{2} R^{\circ}(-1) \,  T^{\circ}_2(2u+1/2) \,  T^{\circ}_1(2u-1/2) \\
& \qq =  T^{\circ}_2(2u+1/2) \,  T^{\circ}_1(2u-1/2) \, \tfrac{1}{2} R^{\circ}(-1) = T^{\circ}_1(2u+1/2) \,  T^{\circ}_2(2u-1/2) \, \tfrac{1}{2} R^{\circ}(-1). 
} 
The second and fourth equality follow from $\tfrac{1}{2} R^{\circ}(-1)  P = \tfrac{1}{2} R^{\circ}(-1)$. We then have 
\begin{align*}
\wh{\psi}_2(z(u)\cdot I) = {} & \wh{\psi}_2(T(u)) \, \wh{\psi}_2(T(u+ 1/2)^{t_+}) \\
 = {} & \tfrac{1}{2} R^{\circ}(-1) \, T^{\circ}_1(2u-1/2)  \, T^{\circ}_2(2u+1/2)  \, \tfrac{1}{2} R^{\circ}(-1)  \,  T^{\circ }_1(2u+3/2)^{t_-} \,  T^{\circ }_2(2u+1/2)^{t_-} \\
= {} & \tfrac{1}{2} R^{\circ}(-1) \,  T^{\circ}_1(2u+1/2) \,  T^{\circ}_2(2u-1/2)  \, \tfrac{1}{2} R^{\circ}(-1)   \, T^{\circ}_1(2u+3/2)^{t_-} \,  T^{\circ }_2(2u+1/2)^{t_-} \\
= {} & \tfrac{1}{2} R^{\circ}(-1)  \, T^{\circ}_1(2u+1/2)  \, T^{\circ}_2(2u-1/2)  \, \tfrac{1}{2} R^{\circ}(-1)  \, R^{\circ }(2)^{t_-}  \,  T^{\circ }_1(2u+3/2)^{t_-} \,  T^{\circ }_2(2u+1/2)^{t_-} \\
 = {} & \tfrac{1}{2} R^{\circ}(-1)  \, T^{\circ}_1(2u+1/2)  \, T^{\circ}_2(2u-1/2)  \, R^{\circ }(2)^{t_-}  \,  T^{\circ }_1(2u+3/2)^{t_-} \,  T^{\circ }_2(2u+1/2)^{t_-} \\
= {} & \tfrac{1}{2} R^{\circ}(-1) \, T^{\circ}_1(2u+1/2) \,  T^{\circ }_1(2u+3/2)^{t_-} \,  R^{\circ }(2)^{t_-} \,  T^{\circ}_2(2u-1/2) \,  T^{\circ }_2(2u+1/2)^{t_-} \\
= {} & \tfrac{1}{2} R^{\circ}(-1) \,  {\rm qdet}\, T^{\circ}(2u+3/2) \,  {\rm qdet} \, T^{\circ}(2u+1/2) \qu\text{ by } \eqref{psi1:z(u)}. 
\end{align*}
Since $w(u) = z(-u-1/4)\, z(u-1/4)$ in $X(\mfso_3)$, we obtain: 
\spl{
\wh{\psi}_2(w(u)) & = \wh{\psi}_2(z(-u-1/4))\, \wh{\psi}_2(z(u-1/4)) \\
& = {\rm qdet}\,T^{\circ}(-2u) \, {\rm qdet}\, T^{\circ}(-2u+1) \,  {\rm qdet}\, T^{\circ}(2u) \, {\rm qdet}\, T^{\circ}(2u+1)  \\
&  =  \left\lbrace \begin{array}{c} {\rm sdet}\,S^{\circ}(2u) \,  {\rm sdet}\,S^{\circ}(2u+1) \text{ in } Y^+(2) \\[.25em]
\dfrac{4u-1}{4u+3}\, {\rm sdet}\, S^{\circ}(2u) \, {\rm sdet}\,S^{\circ}(2u+1) \text{ in } Y^-(2) \end{array} \right. \qu \text{ by } \eqref{Ols:qdet->sdet}.  \label{psi2wu}
}

For $X(\mfso_4)$, it is known that the map
\eqa{
& \psi_3 \;:\; X(\mfso_4) \hookrightarrow Y(2)\ot Y(2),\quad\; T(u) \mapsto T^{\circ}(u)\,T^{\bullet}(u) , \label{X->Y:3}
}
is an embedding which, after composing with $Y(2) \onto SY(2)$ on either copies of $Y(2)$, leads to isomorphisms $X(\mfso_4) \iso SY(2)\ot Y(2)$ and $X(\mfso_4) \iso Y(2)\ot SY(2)$. (See Proposition 4.8 and Corollary 4.10 in \cite{AMR}.) Here $T^\circ(u)$  and $T^\bullet(u)$ denote, respectively, the $T$-matrix of the first and the second copy of the  algebra $Y(2)$ in $Y(2) \otimes Y(2)$. (The notation $T^{\circ}(u)$ and  $T^{\circ\prime}(u)$ is used instead in Section 4.3 in [AMR].) Moreover, by replacing the $T$-matrices $T(u)$ with $\mcT(u)$ (and the same for $T^\circ(u)$ and $T^\bullet(u)$) one obtains the isomorphisms \eq{ \psi_{1} : Y(\mfsp_2) \iso SY(2),\qu \psi_{2} : Y(\mfso_3) \iso SY(2)\qu \text{and} \qu \psi_{3} : Y(\mfso_4) \iso SY(2)\otimes SY(2). \label{psi2SY}} 

In the following subsections we will demonstrate analogues of the maps (\ref{X->Y:1}-\ref{X->Y:3}) for the extended twisted Yangians that correspond to the isomorphisms (\ref{iso:1}-\ref{iso:3}) between symmetric pairs.  The analogue of the isomorphism \eqref{iso:4} for twisted Yangians relates $X(\mfso_4,\mfso_3)^{tw}$ to the so-called achiral twisted Yangian; such algebras in Drinfeld original presentation were introduced by one of the authors in \cite{MaRe}. Their RTT analogues will be considered elsewhere.


\subsection{Olshanskii's twisted Yangians and Molev-Ragoucy reflection algebras of small rank} \label{Sec:4.2}

It was shown in \cite{MR} that Olshanskii's twisted Yangian $Y^\pm(2)$ (and its extensions/specializations) is isomorphic to the Molev-Ragoucy reflection algebra $\mcB(2,q)$ (and its extensions/specializations, respectively) with $q=0$ for $Y^-(2)$ and $q=1$ for $Y^+(2)$. Let us briefly recall a few facts about these algebras (see \cite{MNO} and \cite{MR}) and the isomorphism between them (see Section 4.2 in \cite{MR} for complete details). 

\smallskip

Let $S(u)$ denote the $S$-matrix of the Olshanskii's twisted Yangian $Y^\pm(2)$ and let $s_{ij}(u)$ be its matrix entries with coefficients $s^{(r)}_{ij}$ with ${i,j=\pm1}$, $r\in \Z_\geq 0$ such that $s_{ij}^{(0)}=\del_{ij}$. The $S$-matrix satisfies the twisted reflection equation \eqref{REt}, which is
\eq{
 R(u-v)\,S_1(u)\,R(-u-v)^{t}\, S_2(v) = S_2(v)\,R(-u-v)^{t}\, S_1(u)\,R(u-v), \label{Ols:RE}
}
and the symmetry relation ({\it cf.} \eqref{RES})
\eq{
S(-u)^{t} = S(u)\pm \frac{S(u)-S(-u)}{2u}\label{Ols:symm}.
}
Here $R(u) =1-u^{-1}P$ and $R^{t} =1-u^{-1}Q$. The Sklyanin determinant ${\rm sdet}\,S(u)=1 + c_1 u^{-1} + c_2 u^{-2} + \ldots$ with $c_i\in Y^\pm(2)$ can be given explicitly by the formulas 
\spl{
 {\rm sdet}\,S(u)&=\frac{2u+1}{2u\pm1}\left(s_{-1,-1}(u-1)s_{-1,-1}(-u)\mp s_{-1,1}(u-1)s_{1,-1}(-u) \right) \\
                    &=\frac{2u+1}{2u\pm1}\left(s_{11}(-u)s_{11}(u-1)\mp s_{1,-1}(-u)s_{-1,1}(u-1)\right) \label{Ols:sdet}.
}
Moreover, it satisfies the relation
\eq{
{\rm sdet}\,S(u)=
\begin{cases}
{\rm qdet}\, T(u) \, {\rm qdet}\, T(1-u) &\text{ for } Y^+(2) ,\\[.25em]
\dfrac{2u+1}{2u-1}\,{\rm qdet}\, T(u) \, {\rm qdet}\, T(1-u) &\text{ for } Y^-(2). 
\end{cases}\label{Ols:qdet->sdet}
}
The special twisted Yangian $SY^\pm(2) = Y^\pm(2) \cap SY(2)$ is isomorphic to the quotient $Y^+(2)/({\rm sdet}\, S(u)-1)$ or $Y^-(2)/({\rm sdet}\, S(u)-\frac{2u+1}{2u-1})$. Likewise, $Y^\pm(2)$ is isomorphic to the quotient $\wt Y^\pm(2)/(\msc(u)-1)$ of the extended twisted Yangian $\wt Y^\pm(2)$, where $\msc(u)$ is the formal series defined by the relation ({\it cf.}~(\ref{p(u)c(u)}-\ref{p(u)}))
\eq{
Q\,{\wt S_1}(u)\,R(2u)\,{\wt S_2}(-u)^{-1} =  \left(1\mp \frac{1}{2u}\right)\msc(u)\,Q,
}
and satisfying $\msc(u)\,\msc(-u)=1$. Here $\wt S(u)\in\wt Y^\pm(2)[[u^{-1}]]$.  

\smallskip

Let $B^\circ(u)$ denote the $S$-matrix of the Molev-Ragoucy reflection algebra $\mcB(2,q)$ and let $b^\circ_{ij}(u)$ be its matrix entries with coefficients $b_{ij}^{\circ(r)}$ with $i,j=\pm1$ and $r\ge1$ such that $b_{ij}^{\circ(0)}=\delta_{ij}\epsilon_i$ with $\epsilon_{-1}=1$ and $\epsilon_1 = \pm 1$ ($+1$ if $q=0$ and $-1$ if $q=1$). The defining relations are given by the reflection equation \eqref{RE}
\eq{
R^\circ(u-v)\,B^\circ_1(u)\,R^{\circ}(u+v)\,B^\circ_2(v) = B^\circ_2(v)\,R^{\circ}(u+v)\,B^\circ_1(u)\,R(u-v) , \label{MoRa:RE}
}
and the unitarity constraint 
\eq{
B^{\circ}(u)\,B^{\circ}(-u) = I . \label{BB=I}
}
Let $\wt B^\circ(u)$ denote the $S$-matrix of the extended reflection algebra $\wt \mcB(2,q)$. Then $\wt B^\circ(u)\,\wt B^\circ(-u)=f^\circ(u)\cdot I$; here $f^\circ(u)$ is an even series with coefficients $f^\circ_{2r}$ algebraically independent and central in $\wt \mcB(2,q)$.

An ascending filtration on $\mcB(2,q)$ can be introduced by setting $\deg b_{ij}^{\circ(r)}  = r-1$. Then the corresponding graded algebra $\gr\,\mcB(2,q)$  (resp.\ $\gr\,\mcS\mcB(2,q)$) is isomorphic to the twisted current algebra $U(\mfgl_2[x]^\rho)$ (resp.~$U(\mfsl_2[x]^\rho)$). ($\rho$ is the automorphism of $\mfgl_2$ obtained by conjugation by the diagonal matrix which is the identity ${\rm diag}(1,1)$ if $q=0$ and is $\rm{diag}(1,-1)$ if $q=1$.) The isomorphism is given by the map 
\eq{
(\epsilon_i + (-1)^{r-1}\epsilon_j)\,E_{ij}\, x^{r-1}  \mapsto  \gr\,b_{ij}^{\circ(r)} \label{Y->U}
}
and this induces $\gr\,c^\circ_{2r-1} \mapsto 2\,(-1)^q (E_{-1,-1}+E_{11})\, x^{2(r-1)}$. 

By Proposition 4.3 in \cite{MR} the mappings
\eqa{
& \phi_0 \;:\; \wt Y^-(2) \to \wt \mcB(2,0) , \qquad \wt S^\circ(u) \mapsto \wt B^\circ(u+1/2) , \label{Y->B:1} \\
& \phi_1 \;:\; \wt Y^+(2) \to \wt \mcB(2,1) , \qquad \wt S^\circ(u) \mapsto -\wt B^\circ(u+1/2)\,K , \label{Y->B:2}
}
where $K= E_{11} - E_{-1,-1}$, are algebra isomorphisms. We have similar isomorphisms between $SY^\pm(2)$ and $\mcS\mcB(2,q)$, which can be seen as follows. Let $\Si^\circ(u)$ denote the $S$-matrix of $SY^\pm(2)$ and $B^\circ(u)$ be the $S$-matrix of $\mcS\mcB(2,q)$. We have $\Si^\circ(u) = \mcT^\circ(u)\,\mcT^\circ(-u)^{t_\pm}$. We also have 
$$
\mcT^\circ(u)^{t_-} = \mcT^\circ(u-1)^{-1} , \qu  \mcT^\circ(u)^{t_+} = K\,\mcT^\circ(u-1)^{-1} K,
$$
which follow from the restriction of the isomorphism \eqref{X->Y:1} to the subalgebras $Y(\mfsp_2)$ and $SY(2)$, and the symmetry property \eqref{TTtz(u)} giving $\mcT(2u+2)^{t_-} = \mcT(2u)^{-1}$ for $\mcT(u)\in Y(\mfsp_2)[[u^{-1}]]$. Thus, the isomorphism $SY^\pm(2) \to \mcS\mcB(2,q)$ is simply given by the shift automorphism $\mcT^\circ(u) \mapsto \mcT^\circ(u+1/2)$ yielding 
\spl{
\Si^\circ(u) = \mcT^\circ(u)\,\mcT^\circ(-u)^{t_-} & \mapsto \mcT^\circ(u+1/2)\,\mcT^\circ(-u+1/2)^{t_-} \\
& = \mcT^\circ(u+1/2)\,\mcT^\circ(-u-1/2)^{-1} = B^\circ(u+1/2) \label{SYSB0}
}
if $q=0$, and 
\spl{
\Si^\circ(u) = \mcT^\circ(u)\,\mcT^\circ(-u)^{t_+} & \mapsto \mcT^\circ(u+1/2)\,\mcT^\circ(-u+1/2)^{t_-} \\
&  = \mcT^\circ(u+1/2)\,K\,\mcT^\circ(-u-1/2)^{-1}K = - B^\circ(u+1/2)\,K \label{SYSB1}
}
if $q=1$, as required.
Moreover, as remarked at the end of Section 4.2 in [MoRa], this isomorphism can then be extended to an isomorphism between $Y^\pm(2)$ and $\mcB(2,q)$ because of the tensor product decomposition~ \eqref{TY=TZ*TSY}.


\subsection{Twisted Yangians for the symmetric pairs $(\mfsp_2,\mfsp_2)$ and $(\mfsp_2,\mfgl_1)$}

The twisted Yangians for the symmetric pairs $(\mfsp_2,\mfsp_2)$ and $(\mfsp_2,\mfgl_1)$ are those of types C0 and CI with $\mcG = E_{-1,-1} + E_{11}$ and $\mcG = E_{11} - E_{-1,-1}$, respectively. We have $N=2$ and $\ka=2$. Let us set $Q^+=Q$ for the orthogonal case and $Q^-=Q$ for the symplectic case. Observe that, in the $N=2$ symplectic case, the operators $P$ and $Q^-$ satisfy the identity $P+Q^-=I \otimes I$. This implies the following relation:
\eq{ \label{sp-gl}
R(u) = \frac{u-1}{u-2}\left(I-\frac{2P}{u} \right) = \frac{u-1}{u-2}\,R^\circ(u/2) = \frac{u-1}{u}\,R^{\circ }(1-u/2)^{t_-},
}
where $R(u)$ is given by b) (for $\mfsp_2$) and $R^\circ(u)$ is given by a) in \eqref{R(u)}. We also have the identity $P + K_1 Q^+ K_1 = P + K_2\, Q^+ K_2 = I \ot I$ (where $K = E_{11} - E_{-1,-1}$) which follows from  $K_1 Q^+ K_1= Q^- = K_2 Q^+ K_2$. This implies that
\eq{ \label{so-gl}
 K_1 R(u)\,K_1 = K_2 R(u)\,K_2 = \frac{u-1}{u}\,R^{\circ}(1-u/2)^{t_+} \qu \text{and} \qu K_1 K_2 R^\circ(u) = R^\circ(u) K_1 K_2.
}

\begin{prop}
The mappings
\eqa{
& \varphi_0 \;:\; \wt X(\mfsp_2,\mfsp_2)^{tw} \to \wt \mcB(2,0) , \qquad \wt S(u) \mapsto \wt B^\circ(u/2) , \label{X->B:0} \\
& \varphi_1 \;:\; \wt X(\mfsp_2,\mfgl_1)^{tw} \,\to  \wt \mcB(2,1), \qquad \wt S(u) \mapsto -\wt B^\circ(u/2) , \label{X->B:1}
}
and
\eqa{
& \varphi'_0 \;:\; \wt X(\mfsp_2,\mfsp_2)^{tw}\to \wt Y^-(2) , \qquad \wt S(u) \mapsto \wt S^\circ(u/2-1/2) , \label{X->S:0} \\
& \varphi'_1 \;:\; \wt X(\mfsp_2,\mfgl_1)^{tw}\,\to  \wt Y^+(2), \qquad \wt S(u) \mapsto \wt S^\circ(u/2-1/2)\,K , \label{X->S:1}
}
are algebra isomorphisms.
\end{prop}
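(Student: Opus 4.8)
The plan is to use that each of the four source and target algebras is presented by generators (the coefficients $\wt s_{ij}^{(r)}$, resp.\ $\wt b_{ij}^{\circ(r)}$, $\wt s_{ij}^{\circ(r)}$) subject \emph{only} to a single matrix (twisted) reflection equation together with the prescribed value of the $r=0$ coefficients. Consequently a matrix substitution such as $\wt S(u)\mapsto\wt B^\circ(u/2)$ defines an algebra homomorphism exactly when the substituted matrix (i) satisfies the defining equation of the source and (ii) has the correct leading coefficient. Since the reparametrizations $u\mapsto u/2$ and $u\mapsto u/2-1/2$ are invertible, running each computation backwards will produce a two-sided inverse, so checking well-definedness in both directions yields the isomorphisms. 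The engine of every computation is the pair of identities \eqref{sp-gl} and \eqref{so-gl}, which rewrite the $\mfsp_2$ $R$-matrix of \eqref{R(u)}b) in terms of the $\mfgl_2$ $R$-matrix $R^\circ$.

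For $\varphi_0$ and $\varphi_1$ I would substitute $\wt S(u)=\pm\wt B^\circ(u/2)$ into the reflection equation \eqref{XRE} (with $R$ the $\mfsp_2$ $R$-matrix) defining $\wt X(\mfsp_2,\mfsp_2)^{tw}$ and $\wt X(\mfsp_2,\mfgl_1)^{tw}$. The two overall signs multiply to $+1$, since each side of \eqref{XRE} is quadratic in the $S$-matrix, so the sign is irrelevant to well-definedness. Using $R(w)=\tfrac{w-1}{w-2}R^\circ(w/2)$ from \eqref{sp-gl} to replace both $R(u-v)$ and $R(u+v)$, the scalar prefactors agree on the two sides and cancel; after the rescaling $u\mapsto u/2$, $v\mapsto v/2$ the equation becomes precisely the type-AIII reflection equation \eqref{XRE} for $\wt B^\circ$. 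Comparing the $r=0$ coefficients, $\mcG=E_{-1,-1}+E_{11}=I$ matches $\wt B^{\circ(0)}=\mathrm{diag}(\epsilon_{-1},\epsilon_1)=I$ for $q=0$, while for $q=1$ one has $-\mathrm{diag}(\epsilon_{-1},\epsilon_1)=E_{11}-E_{-1,-1}=\mcG$, as required.

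For $\varphi'_0$ and $\varphi'_1$ the subtlety is that the source obeys the untwisted reflection equation \eqref{XRE} whereas the targets $\wt Y^\mp(2)$ obey the twisted reflection equation \eqref{XtRE}. Here I would use both forms in \eqref{sp-gl}: the ``difference'' matrix $R(u-v)$ is rewritten through $R(w)=\tfrac{w-1}{w-2}R^\circ(w/2)$, and the ``sum'' matrix $R(u+v)$ through $R(w)=\tfrac{w-1}{w}R^\circ(1-w/2)^{t_-}$, which supplies the transpose demanded by \eqref{XtRE}. For $\varphi'_0$, after cancelling the scalar prefactors and applying $u\mapsto u/2-1/2$, one lands exactly on \eqref{XtRE} with $t=t_-$, the defining relation of $\wt Y^-(2)$; the leading coefficient $I$ matches $\wt S^{\circ(0)}=I$. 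For $\varphi'_1$ the extra factor $K$ in $\wt S(u)\mapsto\wt S^\circ(u/2-1/2)\,K$ must be carried through: inserting $K_i^2=I$ to form $K_iR(u+v)K_i$ and invoking \eqref{so-gl} turns the sum-type $R$-matrix into $R^\circ(1-w/2)^{t_+}$ (now with $t_+$, as needed for $\wt Y^+(2)$), while the commutativity $K_1K_2R^\circ=R^\circ K_1K_2$ of \eqref{so-gl} lets the surviving $K_1K_2$ collect on the right of both sides, where it cancels. The leading coefficient is $\mcG\mapsto K=E_{11}-E_{-1,-1}$, matching the $r=0$ datum of $\wt Y^+(2)$.

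Finally I would record that each substitution is invertible — for instance $\varphi_0^{-1}\colon\wt B^\circ(u)\mapsto\wt S(2u)$ and $(\varphi'_1)^{-1}\colon\wt S^\circ(u)\mapsto\wt S(2u+1)\,K$ — and that reading the identities \eqref{sp-gl}, \eqref{so-gl} in the opposite direction shows these inverse substitutions are again algebra homomorphisms; since the two composites act as the identity on generators, each $\varphi$ is an isomorphism. I expect the computation for $\varphi'_1$ to be the main obstacle: it is the only case combining a change of transpose ($t_-\to t_+$) with the bookkeeping of the two one-sided matrices $K_1$ and $K_2$, and it is essential there that \eqref{so-gl} simultaneously conjugates $R$ into the $t_+$-transpose and guarantees that $K_1K_2$ commutes past $R^\circ$, so that it can be eliminated uniformly from both sides.
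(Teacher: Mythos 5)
Your proposal is correct and follows essentially the same route as the paper's (much terser) proof: both rest on rewriting the $\mfsp_2$ $R$-matrix via \eqref{sp-gl} and \eqref{so-gl} so that the defining (twisted) reflection equations of source and target transform into one another, fixing the sign in $\varphi_1$ by comparing constant terms, and inverting the substitutions. Your write-up merely fills in the details the paper leaves implicit (scalar cancellation, the $K_1K_2$ bookkeeping for $\varphi'_1$, and the explicit inverse maps).
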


\begin{proof}
Use the first equality in \eqref{sp-gl} and compare \eqref{XRE} with \eqref{MoRa:RE}; this gives \eqref{X->B:0}. The isomorphism $\varphi_1$ can be deduced in the same way, except that a sign is needed here, which can be seen by comparing the constant terms. Using instead the second equality in \eqref{sp-gl} and comparing \eqref{XRE} with \eqref{Ols:RE}, we deduce \eqref{X->S:0}. In a similar way, the equality \eqref{so-gl} leads to \eqref{X->S:1}. One can also use the isomorphisms $\phi_0^{-1}: \wt{B}^\circ(u)\mapsto \wt{S}^\circ(u-1/2)$ from \eqref{Y->B:1} and $\phi_1^{-1}: \wt{B}^\circ(u)\mapsto -\wt{S}^\circ(u-1/2)\,K$ from \eqref{Y->B:2} to obtain \eqref{X->S:0} and \eqref{X->S:1} from \eqref{X->B:0} and \eqref{X->B:1}, respectively. 
\end{proof}

\begin{crl}
The maps $\varphi'_0$ and  $\varphi'_1$ descend to the algebras $X(\mfsp_2,\mfsp_2)^{tw}$, $X(\mfsp_2,\mfgl_1)^{tw}$ and induce isomorphisms
\eqa{
X(\mfsp_2,\mfsp_2)^{tw} &\cong Y^-(2), \hspace{.9cm} X(\mfsp_2,\mfgl_1)^{tw} \cong Y^+(2), \label{iso-sp-1} \\
Y(\mfsp_2,\mfsp_2)^{tw} &\cong SY^-(2), \qq Y(\mfsp_2,\mfgl_1)^{tw} \cong SY^+(2), \label{iso-sp-2}
}
respectively. We also have isomorphisms 
\eqa{
X(\mfsp_2,\mfsp_2)^{tw} & \cong \mcB(2,0), \hspace{.9cm} X(\mfsp_2,\mfgl_1)^{tw}  \cong \mcB(2,1) , \label{iso-sp-3} \\
Y(\mfsp_2,\mfsp_2)^{tw} & \cong \mcS\mcB(2,0), \qq Y(\mfsp_2,\mfgl_1)^{tw} \cong \mcS\mcB(2,1),\label{iso-sp-4}
}

\end{crl}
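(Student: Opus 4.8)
The plan is to obtain all four displayed isomorphisms from the single extended-Yangian isomorphism $\psi_1\colon X(\mfsp_2)\iso Y(2)$ of \eqref{X->Y:1}, by checking that it restricts appropriately to the twisted-Yangian subalgebras. Since $X(\mfsp_2,\mfsp_2)^{tw}$ and $X(\mfsp_2,\mfgl_1)^{tw}$ are by definition subalgebras of $X(\mfsp_2)$, this is legitimate, and the restriction will coincide with the map induced by $\varphi'_0$ (resp.\ $\varphi'_1$) on the quotient of $\wt X(\mfsp_2,\mfsp_2)^{tw}$ (resp.\ $\wt X(\mfsp_2,\mfgl_1)^{tw}$) by $(\msc(u)-1)$: both send the $S$-matrix to the same reparametrised $S$-matrix, which simultaneously shows that $\varphi'_0,\varphi'_1$ descend. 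The core computation is to evaluate $\psi_1$ on the $S$-matrix \eqref{S=TGT}. For type C0 we have $\ka=2$ and $K(u)=I$, so $S(u)=T(u-1)\,T(1-u)^{t_-}$; since the transpose acts only on matrix indices it commutes with $\psi_1$, and I would compute $\psi_1(S(u))=T^\circ(u/2-1/2)\,T^\circ(1/2-u/2)^{t_-}=S^\circ(u/2-1/2)$, the $S$-matrix of $Y^-(2)$. Hence $\psi_1$ restricts to $X(\mfsp_2,\mfsp_2)^{tw}\cong Y^-(2)$, giving the first half of \eqref{iso-sp-1}.

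For type CI the $S$-matrix is $S(u)=T(u-1)\,K\,T(1-u)^{t_-}$ with $K=E_{11}-E_{-1,-1}$. I would use the purely numerical identity $A^{t_+}=K\,A^{t_-}K$ (the two transposes differ only by the signs $\mathrm{sign}(i)\,\mathrm{sign}(j)$ recorded by $K$) to write $T^\circ(1/2-u/2)^{t_+}=K\,T^\circ(1/2-u/2)^{t_-}K$; together with $K^2=I$ this yields $\psi_1(S(u))=S^\circ(u/2-1/2)\,K$, where now $S^\circ$ is the $S$-matrix of $Y^+(2)$ (built from $t_+$). Since $K$ is an invertible scalar matrix, this identifies $X(\mfsp_2,\mfgl_1)^{tw}$ with the subalgebra $Y^+(2)$, completing \eqref{iso-sp-1} and matching the formula for $\varphi'_1$.

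The special-Yangian isomorphisms \eqref{iso-sp-2} then follow with no further computation: using $Y(\mfg,\mcG)^{tw}=X(\mfg,\mcG)^{tw}\cap Y(\mfg)$ (Definition \ref{D:Y(g,G):tw}), $SY^\pm(2)=Y^\pm(2)\cap SY(2)$, and the fact that $\psi_1$ is a bijection restricting to $Y(\mfsp_2)\cong SY(2)$, I would simply intersect the two images, e.g.\ $\psi_1(Y(\mfsp_2,\mfsp_2)^{tw})=Y^-(2)\cap SY(2)=SY^-(2)$, and similarly in the CI case. Finally, \eqref{iso-sp-3} and \eqref{iso-sp-4} are obtained by composing the isomorphisms just produced with $Y^-(2)\cong\mcB(2,0)$, $Y^+(2)\cong\mcB(2,1)$ and their special counterparts $SY^\pm(2)\cong\mcS\mcB(2,q)$ recalled in Section \ref{Sec:4.2}, taking $q=0$ for $(\mfsp_2,\mfsp_2)$ and $q=1$ for $(\mfsp_2,\mfgl_1)$.

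The main obstacle is not conceptual but lies in getting the spectral bookkeeping exactly right: tracking the shift $u\mapsto u/2-1/2$ through the definition of the $S$-matrix, and, in the CI case, handling the interplay between the two transposes $t_\pm$ and the matrix $K$. Once $\psi_1(S(u))$ has been computed in closed form, every isomorphism in the statement follows either by restriction or by composition with results established earlier in the paper.
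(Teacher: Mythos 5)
Your proposal is correct, but it follows a genuinely different route from the paper's main argument --- in fact it is precisely the alternative that the paper itself only sketches in the last paragraph of its proof (``another way to obtain that $X(\mfsp_2,\mfsp_2)^{tw}$ and $Y^-(2)$ are isomorphic would be to \dots restrict \dots the isomorphism $\psi_1$''). The paper's primary proof stays at the level of the \emph{extended} algebras: it applies $\varphi'_0,\varphi'_1$ to the identity \eqref{p(u)c(u)} defining the central series $\msc(u)$, shows (using \eqref{sp-gl}, \eqref{so-gl} and the identities $Q^-=K_2Q^+K_2$, $Q^-=-K_2Q^+K_1$) that $p(u)(\msc(u)-1)\,Q$ is sent to a unit multiple of $(\msc^\circ(u/2-1/2)-1)\,Q^\mp$, hence that the ideals $(\msc_i,\,i\ge1)$ correspond, and concludes \eqref{iso-sp-1} by passing to quotients; for \eqref{iso-sp-2} it identifies $Y(\mfsp_2,\mcG)^{tw}$ and $SY^\pm(2)$ as fixed-point subalgebras of the automorphisms $\nu_g$ and checks the equivariance $\varphi'_0\circ\nu_g=\nu_{\check g}\circ\varphi'_0$ (with a second argument via $w(u)$ and the Sklyanin determinant). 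Your route instead computes $\psi_1(S(u))$ in closed form ($S^\circ(u/2-1/2)$ for C0, $S^\circ(u/2-1/2)K$ for CI --- both computations are right, including the transpose identity $A^{t_+}=KA^{t_-}K$), deduces descent of $\varphi'_0,\varphi'_1$ from the resulting commutative square with the two quotient maps $\wt S(u)\mapsto S(u)$, $\wt S^\circ(u)\mapsto S^\circ(u)$, and obtains \eqref{iso-sp-2} by intersecting images, using $Y(\mfg,\mcG)^{tw}=X(\mfg,\mcG)^{tw}\cap Y(\mfg)$, $SY^\pm(2)=Y^\pm(2)\cap SY(2)$ and $\psi_1(Y(\mfsp_2))=SY(2)$; the final compositions for \eqref{iso-sp-3}--\eqref{iso-sp-4} coincide with the paper's. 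What the paper's approach buys is an explicit, relation-level verification of how the central series transform, which is what generalizes to the rank-two cases later in the paper; what yours buys is brevity and reuse of $\psi_1$, at the cost of two small points you should make explicit: (i) surjectivity onto $Y^\mp(2)$ requires the (standard, triangular) observation that the coefficients of $S^\circ(u/2-1/2)$, resp.\ of $S^\circ(u/2-1/2)K$, generate the same subalgebra as those of $S^\circ(u)$, since the affine reparametrization and right multiplication by a constant invertible matrix mix coefficients triangularly; and (ii) $K=E_{11}-E_{-1,-1}$ is a constant invertible matrix but not a \emph{scalar} matrix, as you call it --- the argument is unaffected, but the wording should be fixed.
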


\begin{proof}
The isomorphism $X(\mfsp_2,\mfsp_2) \cong Y^-(2)$ follows from the observation that the image of the identity \eqref{p(u)c(u)} under the map $\varphi'_0$ is equivalent to the identity \eqref{Ols:symm} for $Y^-(2)$:
\eqa{
& \varphi'_0 \left( Q\,\wt{S}_1(u)\,R(2u-2)\,\wt{S}_2(2-u)^{-1} - \left(1+({2u-2})^{-1}+2({2u-4})^{-1}\right)Q \right) \el
& \qquad = \frac{2u-3}{2u-4}\left( Q^-\,S^\circ_1(u/2-1/2)\,R^\circ(u-1)\,(S^\circ_2(1/2-u/2))^{-1} - \left(1+({u-1})^{-1}\right)Q^- \right) . \nn
}
Here we used $\ka=2$ and $\Tr(K(u))=2$; the operator $Q$ on the left-hand side is the one used for $X(\mfsp_2)$, hence it equals the operator $Q^-$ in the notation used for $Y^-(2)$. We also had to use \eqref{sp-gl}. The previous equality shows that $\varphi_0'$ sends $p(u) (\msc(u)-1)\,Q$ to  $( 1 + ({u-1})^{-1}) (\msc^\circ(u/2-1/2)-1)\, Q^-$, hence it establishes an isomorphism between the ideal of $\wt X(\mfsp_2,\mfsp_2)^{tw}$ generated by $\msc_i, i\ge 1$, and the ideal of $\wt{Y}^-(2)$ generated by the non-constant coefficients of $\msc^\circ(u)$. Since $Y^-(2)$ is isomorphic to the quotient of  $\wt{Y}^-(2)$ by this ideal, it follows that  $Y^-(2)$ is isomorphic to $\wt X(\mfsp_2,\mfsp_2)^{tw}/(\msc_i, i\ge 1)$, hence to $X(\mfsp_2,\mfsp_2)^{tw}$.

Similarly, for $\wt{X}(\mfsp_2,\mfgl_1)^{tw}$, we have $\Tr(K(u))=0$ giving
\eqa{
& \varphi'_1 \left( Q\,S_1(u)\,R(2u-2)\,S_2(2-u)^{-1} - \left(-1+({2u-2})^{-1}\right)Q \right) \el
& \qquad = \frac{2u-3}{2u-4}\left( Q^-\,S^\circ_1(u/2-1/2)\,K_1\,R^\circ(u-1)\,K_2\,S^\circ_2(1/2-u/2)^{-1} + \left(1-({u-1})^{-1}\right) Q^- \right)  \el
& \qquad = \frac{2u-3}{2u-4}\,K_2\left( Q^+\,S^\circ_1(u/2-1/2)\,R^\circ(u-1)\,S^\circ_2(1/2-u/2)^{-1} - \left(1-({u-1})^{-1}\right) Q^+ \right)K_1 , \nn
}
where in the second equality we have used the identities $Q^- = K_2 Q^+ K_2$ and $Q^- = - K_2 Q^+ K_1$. This shows that $\varphi_1'$ establishes an isomorphism between the ideal of $\wt X(\mfsp_2,\mfgl_1)^{tw}$ generated by $\msc_i, i\ge 1$, and the ideal of $\wt{Y}^+(2)$ generated by the non-constant coefficients of $\msc^{\circ}(u)$; consequently, $X(\mfsp_2,\mfgl_1)^{tw}$ is isomorphic to $Y^+(2)$. The isomorphisms in \eqref{iso-sp-1} have thus been established.

Let $g(u)$ be an even power series with constant coefficient $1$ and set $\check{g}(u) = g(2u)$. $Y(\mfsp_2,\mfsp_2)^{tw}$ is isomorphic to the subalgebra of $X(\mfsp_2,\mfsp_2)^{tw}$ fixed by all the automorphisms $\nu_g$ (see \cite{GR}, Section 3.2) and $SY^-(2)$ is isomorphic to the subalgebra of $Y^-(2)$ fixed by the similar automorphisms $S^{\circ}(u) \mapsto g(u)S^{\circ}(u)$ (or with $g(u)$ replaced by $\check{g}(u)$), which we also denote by $\nu_g$. In $X(\mfsp_2,\mfsp_2)^{tw}$, $\nu_g(S(u)) = g(u-1)S(u)$, so it follows from \eqref{X->S:0} that $\varphi'_0 \circ \nu_g = \nu_{\check{g}} \circ \varphi'_0$. This implies that $\varphi'_0$ restricts to an isomorphisms between $Y(\mfsp_2,\mfsp_2)^{tw}$ and $SY^-(2)$.  The same argument works for $\varphi'_1$.  

Furthermore, $\varphi'_1(w(u)) = {\rm sdet}\, S^{\circ}(u/2 + 1/2)$ in $Y^+(2)$ by \eqref{Ols:sdet}, so $\varphi'_1$ descends to an isomorphism between $X(\mfsp_2,\mfgl_1)^{tw}/(w(u)-1)$ ($\cong Y(\mfsp_2,\mfgl_1)^{tw}$) and $Y^+(2)/({\rm sdet}\, S^{\circ}(u)-1)$ ($\cong Y^+(2)$): the ideal generated by the non-constant coefficients of ${\rm sdet}\, S^{\circ}(u)$ is the same as the ideal generated by the non-constant coefficients of ${\rm sdet}\, S^{\circ}(u/2+1/2)$.  The same is true for $\varphi'_0$ except that one needs to take into account the extra factor $\gamma_2(u)$: see \eqref{gamma(u)}. 

The remaining isomorphisms $X(\mfsp_2,\mfsp_2)^{tw} \cong \mcB(2,0)$, $X(\mfsp_2,\mfgl_1)^{tw} \cong \mcB(2,1)$ and $Y(\mfsp_2,\mfsp_2)^{tw} \cong \mcS\mcB(2,0)$, $Y(\mfsp_2,\mfgl_1)^{tw} \cong \mcS\mcB(2,1)$ follow from the isomorphisms \eqref{SYSB0} and \eqref{SYSB1}.

Another way to obtain that $X(\mfsp_2,\mfsp_2)^{tw}$ and $Y^-(2)$ are isomorphic would be to view both of these as subalgebras of $X(\mfsp_2)$ and $Y(2)$ and restrict to those twisted Yangians the isomorphism $\psi_1$ of \eqref{X->Y:1}. Using the isomorphism between $Y(\mfsp_2)$ and $SY(2)$ and viewing $Y(\mfsp_2,\mfsp_2)^{tw}$ and $\mcS\mcB(2,0)$ as subalgebras of those Yangians, it is possible to see that $Y(\mfsp_2,\mfsp_2)^{tw}$ and $\mcS\mcB(2,0)$ are isomorphic. It follows from the tensor product decomposition \ref{TY=TZ*TSY} that $X(\mfsp_2,\mfsp_2)^{tw}$ and $\mcB(2,0)$ are also isomorphic. The same works for $Y(\mfsp_2,\mfgl_1)^{tw}$ and $\mcS\mcB(2,1)$.
\end{proof}


\subsection{Twisted Yangians for the symmetric pairs $(\mfso_3,\mfso_3)$ and $(\mfso_3,\mfso_2)$}

We recall the presentation of $X(\mfso_3)$ and the notation given in Section 4.2 in \cite{AMR}. Let the standard basis for the vector space $\C^2$ be given by vectors $e_{-1}$ and $e_1$. Set $V$ to be the three--dimensional subspace of $\C^2\ot\C^2$ spanned by vectors $v_{-1}=e_{-1}\ot e_{-1}$, $v_{0}=\tfrac{1}{\sqrt{2}} (e_{-1}\ot e_1+e_1\ot e_{-1})$, $v_{1}=-e_1\ot e_1$. Introduce operators $P_V, Q_V\in \End(V\ot V)$ in the usual way, as in \eqref{PQ}. Then the defining relation \eqref{RTT} for $X(\mfso_3)$ may be written as
$$
R_V (u-v)\,T_{V,1}(u)\,T_{V,2}(v) =  T_{V,2}(v)\,T_{V,1}(u)\,R_V (u-v) , 
$$
with $T_{V,1}(u),\,T_{V,2}(u)\in \End\,V\ot \End\,V\ot X(\mfso_3)[[u^{-1}]]$ and where (see \cite[Lemma 4.5]{AMR}) 
$$
R_V(u) = \dfrac{2u-1}{2u+1}\left(I_V-\frac 1u\,{P_V}+\frac{1}{u-1/2}\,{Q_V} \right) .
$$
$R_V(u)$ is, up to a power series, the $R$-matrix used previously to define $X(\mfso_3)$. Moreover, the matrix $\tfrac{1}{2}R^\circ(-1)=\tfrac{1}{2}(I+P) \in \End(\C^2\ot\C^2)$ is a projector of $\C^2\ot\C^2$ to the subspace $V$.

\smallskip

Let $A=${\small$\begin{pmatrix} \frac{1}{2} & \frac{i}{\sqrt{2}} & \frac{1}{2} \\ -\frac{i}{\sqrt{2}} & 0 & \frac{i}{\sqrt{2}} \\ \frac{1}{2} & -\frac{i}{\sqrt{2}} & \frac{1}{2} \end{pmatrix}$} and set  $\mcG'= A${\small$\begin{pmatrix} 0 & 0 & 1 \\ 0 & 1 & 0 \\ 1 & 0 & 0 \end{pmatrix}$}$ A^t = ${\small$\begin{pmatrix} 1 & 0 & 0 \\ 0 & -1 & 0 \\ 0 & 0 & 1 \end{pmatrix}$}. By \cite[Remark 3.2]{GR}, the matrix $\mcG'$ can be used instead of $\mcG=${\small$\begin{pmatrix} 0 & 0 & 1 \\ 0 & 1 & 0 \\ 1 & 0 & 0 \end{pmatrix}$} to define $\wt X(\mfso_3,\mfso_2)^{tw}$, which is what we will do for the remainder of this section. Moreover, we have that $K'(u) = \dfrac{I - 4u\, \mcG'}{1-4u} = ${\small$ \begin{pmatrix} 1 & 0 & 0 \\ 0 & \dfrac{1+4u}{1-4u} & 0 \\ 0 & 0 & 1 \end{pmatrix}$}. 

\smallskip

Let $( v_i\, |\, v_j ) = \delta_{ij}$ denote the invariant bilinear form on $V$. For any $A(u) \in \End\, V[[u^{-1}]]$, we have $A_{ij}(u) = ( v_i \,|\, A(u)\, v_j )$, where $v_i$ are basis vectors of $V$.  Moreover, let $\langle e_a\ot e_b\,|\,e_c\ot e_d\rangle=\delta_{ac}\delta_{bd}$ be the standard bilinear form on $\C^2\ot\C^2$. Then let $A^{\circ}(u) \in \End(\C^2\ot\C^2)[[u^{-1}]]$ be such that  $A_{ij}(u) = \langle{ v_i \, |\, A^\circ(u) \, v_j \rangle}$, where now $v_{-1}= e_{-1}\ot e_{-1}$, $v_{0}= \frac{1}{\sqrt{2}}(e_{-1}\ot e_1+e_1\ot e_{-1})$, $v_{1} = -e_1 \ot e_1$. For the matrix $K'(u)$ above, $K^{\prime\circ}(u)$ can be given by  
\eq{
K^{\prime\circ}(u)=\text{\small$\begin{pmatrix} 1 & 0 & 0 & 0 \\ 0 & \frac{1+4u}{1-4u} & 0 & 0 \\ 0 & 0 & \frac{1+4u}{1-4u} & 0 \\ 0 & 0 & 0 & 1 \end{pmatrix}$}. \label{K-image}
} 
If we view $K^{\prime\circ}(u)$ as an element of $\End(\C^2)^{\ot2}[[u^{-1}]]$, $K^{\prime\circ}(u)$ as in $\End\, V[[u^{-1}]]$ and $\tfrac12 R^\circ(-1)$ as an element of $\Hom(\C^2\ot\C^2,V)$, then $K'(u)$ equals the restriction of $\tfrac12 R^\circ(-1) K^{\prime\circ}(u)$ to the subspace $V$.

\pagebreak
  
\begin{prop} \label{P:so3:1}
Let $q=0$ if $\pm =-$ and $q=1$ if $\pm=+$. We then have the following: 
\begin{enumerate} [itemsep=0.5ex]

\item \label{psi21} The restriction of $\wh{\psi}_2$ to $X(\mfso_3,\mfso_{3-q})^{tw}$ yields an  isomorphism with $Y^{\pm}(2)$ given by  
$$
\wh{\psi}_2(S(u)) = \tfrac12 R^\circ(-1)\, S_1^\circ(2u-1)\, R^{\circ}(-4u+1)^{t_\pm} S_2^\circ(2u)\, (K_1 K_2)^q.
$$

\item \label{psi22} The restriction of $\wh{\psi}_2$ to $Y(\mfso_3,\mfso_{3-q})^{tw}$, viewed as the quotient of $X(\mfso_3,\mfso_{3-q})^{tw}$ by the ideal generated by the coefficients of the unitarity relation, descends to an isomorphism with $SY^\pm(2)$. 

\item \label{psi23} The restriction of $\wh{\psi}_2$ to $Y(\mfso_3,\mfso_{3-q})^{tw}$, viewed this time as a subalgebra of $X(\mfso_3,\mfso_{3-q})^{tw}$, also yields an isomorphism with $SY^\pm(2)$.  

\end{enumerate}
\end{prop}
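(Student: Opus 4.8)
The plan is to establish (\ref{psi21}) by a direct computation of $\wh{\psi}_2(S(u))$, and then to obtain (\ref{psi22}) and (\ref{psi23}) from it using the facts about $\wh{\psi}_2$ and the centres recalled in Sections \ref{Sec:3.2} and \ref{Sec:4.1}. Since $\ka=\tfrac12$ for $\mfso_3$, the $S$-matrix is $S(u)=T(u-\tfrac14)\,K(u)\,T(-u+\tfrac14)^{t_+}$ by \eqref{S=TGT}. First I would insert the image of each factor under $\wh{\psi}_2$: by \eqref{X->Y:2} and the defining shift, $T(u-\tfrac14)$ maps to $\tfrac12 R^\circ(-1)\,T_1^\circ(2u-1)\,T_2^\circ(2u)$, while by \eqref{psi2Tt} the factor $T(-u+\tfrac14)^{t_+}$ maps to $\tfrac12 R^\circ(-1)\,T_1^\circ(-2u)^{t_-}\,T_2^\circ(-2u+1)^{t_-}$. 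For $q=0$ we have $K(u)=I$; for $q=1$ the scalar matrix $K'(u)$ is replaced, on $V$, by the restriction of $\tfrac12 R^\circ(-1)\,K^{\prime\circ}(u)$ with $K^{\prime\circ}(u)$ as in \eqref{K-image}.

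The heart of the argument is to simplify this product, viewed in $\End(\C^2\ot\C^2)\ot Y(2)$ and restricted to $V$, into the claimed fused form $\tfrac12 R^\circ(-1)\,S_1^\circ(2u-1)\,R^\circ(-4u+1)^{t_\pm}\,S_2^\circ(2u)\,(K_1K_2)^q$, where $S^\circ(u)=T^\circ(u)\,T^\circ(-u)^{t_\pm}$ is the $S$-matrix of $Y^\pm(2)$. I would carry this out with the projector identities $(\tfrac12 R^\circ(-1))^2=\tfrac12 R^\circ(-1)$, $\tfrac12 R^\circ(-1)\,P=\tfrac12 R^\circ(-1)$ and $\tfrac12 R^\circ(-1)\,R^\circ(2)^{t_-}=\tfrac12 R^\circ(-1)$ from Section \ref{Sec:4.1}, together with the RTT-relations for $Y(2)$, in order to push the central projector through the $T^\circ$-factors, to regroup the space-$1$ and space-$2$ factors into $S_1^\circ(2u-1)$ and $S_2^\circ(2u)$, and to generate the intertwiner $R^\circ(-4u+1)$ from the exchange of the spectral parameters $2u$ and $-2u+1$. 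In the case $q=1$ two further identities are required: on $V$ one checks directly that $K^{\prime\circ}(u)=R^\circ(-4u+1)^{t_+}(K_1K_2)$ (which accounts at once for the change $t_-\!\to t_+$ in the intertwiner and for the trailing $(K_1K_2)$), and the conjugation identity $X^{t_-}=K\,X^{t_+}K$, which rewrites the $t_-$-transposes produced by \eqref{psi2Tt} as the $t_+$-transposes occurring in $S^\circ(u)=T^\circ(u)\,T^\circ(-u)^{t_+}$.

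Once the formula is in place, the inclusion $\wh{\psi}_2(X(\mfso_3,\mfso_{3-q})^{tw})\subseteq Y^\pm(2)$ is immediate, since the right-hand side is assembled from entries of $S^\circ(u)$ and constant matrices only; and injectivity is automatic because $\wh{\psi}_2$ is an isomorphism of $X(\mfso_3)$ onto $Y(2)$. For surjectivity I would compare Hilbert series: with $\deg s_{ij}^{(r)}=r-1$ the map $\wh{\psi}_2$ is filtered, and the associated graded algebras of $X(\mfso_3,\mfso_{3-q})^{tw}$ and $Y^\pm(2)$ have the same Hilbert series (from the PBW descriptions of their associated gradeds together with the isomorphisms $\mfso_3\cong\mfsl_2\cong\mfsp_2$, the extra central unitarity series on the left matching the central Sklyanin series on the right), so an injective filtered map between them is necessarily onto. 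This proves (\ref{psi21}).

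For (\ref{psi23}) the argument is formal: $Y(\mfso_3,\mfso_{3-q})^{tw}=X(\mfso_3,\mfso_{3-q})^{tw}\cap Y(\mfso_3)$, and since $\wh{\psi}_2$ is a global isomorphism restricting to $Y(\mfso_3)\iso SY(2)$ by \eqref{psi2SY} and to $X(\mfso_3,\mfso_{3-q})^{tw}\iso Y^\pm(2)$ by (\ref{psi21}), it carries the intersection onto $Y^\pm(2)\cap SY(2)=SY^\pm(2)$. For (\ref{psi22}) I would use \eqref{psi2wu}, which expresses $\wh{\psi}_2(w(u))$ as a scalar multiple of ${\rm sdet}\,S^\circ(2u)\,{\rm sdet}\,S^\circ(2u+1)$: since $\wh{\psi}_2$ maps the centre of $X(\mfso_3,\mfso_{3-q})^{tw}$ isomorphically onto the centre of $Y^\pm(2)$, the ideal generated by the non-constant coefficients of $w(u)$ corresponds to the one generated by the non-constant coefficients of ${\rm sdet}\,S^\circ(u)$ (the factor $\ga_2(u)$ in the $Y^-$ case being absorbed exactly as in the proof of the Corollary of the previous subsection), so by \eqref{TY=TZ*TSY} and \eqref{quotAIAII} the isomorphism descends to $Y(\mfso_3,\mfso_{3-q})^{tw}\cong SY^\pm(2)$. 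The main obstacle is the fusion computation of (\ref{psi21}): the delicate point is the simultaneous bookkeeping of the spectral-parameter shifts and, for $q=1$, of the $t_-\!/t_+$ conversion and the resulting placement of the $K$-matrices.
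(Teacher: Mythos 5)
Your treatment of parts (\ref{psi21}) (the fusion formula), (\ref{psi22}), and your overall architecture track the paper's own proof closely: the same insertion of \eqref{psi2Tt} after the shift, the same projector and RTT manipulations, and for $q=1$ the same two key identities (your relation $\tfrac12 R^\circ(-1)\,R^\circ(-4u+1)^{t_+}K_1K_2=\tfrac12 R^\circ(-1)\,K^{\prime\circ}(u)$ restricted to $V$, and $K\,T^\circ(u)^{t_-}K=T^\circ(u)^{t_+}$, are exactly the first and third identities in \eqref{3ids}); part (\ref{psi22}) is argued from \eqref{psi2wu} just as in the paper. Your part (\ref{psi23}) is a genuinely different and legitimate route: the paper characterizes $Y(\mfso_3,\mfso_{3-q})^{tw}$ and $SY^\pm(2)$ as fixed-point subalgebras of the automorphisms $\nu_f$, $\nu_{h^\circ}$ and checks $\wh{\psi}_2\circ\nu_f=\nu_{h^\circ}\circ\wh{\psi}_2$ via a factorization $f(u-1/4)=h^\circ(2u)\,h^\circ(2u-1)$, whereas you simply intersect, using injectivity of $\wh{\psi}_2$, Definition \ref{D:Y(g,G):tw}, $SY^\pm(2)=Y^\pm(2)\cap SY(2)$, and \eqref{psi2SY}; this is cleaner, and valid provided \eqref{psi2SY} is read as a statement about the restriction of $\psi_2$ to the subalgebra $Y(\mfso_3)\subset X(\mfso_3)$ (which is how the paper uses the analogous fact for $\psi_1$) and one notes that the shift automorphism of $Y(2)$ preserves $SY(2)$.

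The genuine gap is your surjectivity argument in part (\ref{psi21}). With the filtration $\deg s_{ij}^{(r)}=r-1$, the filtered and graded components are \emph{infinite dimensional}: the degree-zero piece of $Y^\pm(2)$ already contains the whole unital subalgebra generated by the $s_{ij}^{\circ(1)}$ (its associated graded is an enveloping algebra of a twisted current Lie algebra), so ``same Hilbert series'' is not meaningful for this filtration, and the principle ``an injective filtered map between algebras of equal Hilbert series is onto'' is false without finite-dimensional components — for instance $x\mapsto x^2$ on $\C[x]$, with everything placed in degree $0$, is injective, filtered, and not surjective. The idea can be repaired by switching to the other standard filtration $\deg s_{ij}^{(r)}=r$: by the PBW-type results of \cite{GR} and \cite{MNO} together with the decompositions \eqref{TY=TZ*TSY}, both associated graded algebras are then polynomial rings with finitely many generators in each positive degree, and the generator counts per degree agree ($3,1,3,1,\ldots$ for $X(\mfso_3,\mfso_3)^{tw}$ against $Y^-(2)$, and $1,3,1,3,\ldots$ for $X(\mfso_3,\mfso_2)^{tw}$ against $Y^+(2)$), so the filtered components are finite dimensional of equal dimension and injectivity does force surjectivity. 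Alternatively, argue as the paper does (this is what its proof of Proposition \ref{P:so3:1} delegates to the surjectivity portion of the proof of Proposition \ref{P:so3so3so2}): extract coefficients of the images of suitable entries, observe the triangular form $2^{-r+1}s_{ij}^{\circ(r)}+(\text{polynomial in generators of order}<r)$, and induct on $r$.
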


\begin{proof} 
We first provide a proof for $X(\mfso_3,\mfso_3)^{tw}$. We will explain afterwards the modifications necessary for $X(\mfso_3,\mfso_2)^{tw}$. \smallskip

\noindent Proof of \eqref{psi21}: 
We view $X(\mfso_3,\mfso_3)^{tw}$ as a subalgebra of $X(\mfso_3)$ defined by $S(u) = T(u-1/4)\,T(-u+1/4)^{t_+}$. This allows us to compute $\wh{\psi}_2(S(u))$. It can be checked directly that $\tfrac12 R^\circ(-1)\, R^{\circ}(-4u+1)^{t_-} = \tfrac12 R^\circ(-1) $, which we need to obtain the third equality below:
\begin{align}
\wh{\psi}_2(S(u)) = {} & \wh{\psi}_2(T(u-1/4)\, T(-u+1/4)^{t_+}) = \wh{\psi}_2(T(u-1/4)) \,  \wh{\psi}_2(T(-u+1/4)^{t_+}) \notag \\
= {} &  \tfrac12 R^\circ(-1)\, T^\circ_1(2u-1)\, T^\circ_2(2u)\, \tfrac12 R^\circ(-1) \, T^{\circ}_1(-2u+1)^{t_-} T^{\circ}_2(-2u)^{t_-} \qu \text{ by } \eqref{psi2Tt}; \notag \\
= {} &  \tfrac12 R^\circ(-1)\, T^\circ_1(2u-1)\, T^\circ_2(2u)\, \tfrac12 R^\circ(-1) \,R^{\circ}(-4u+1)^{t_-} T^{\circ}_1(-2u+1)^{t_-}  T^{\circ}_2(-2u)^{t_-}   \notag \\ 
= {} &  \tfrac12 R^\circ(-1)\, T^\circ_1(2u-1)  \, T^{\circ}_1(-2u+1)^{t_-} R^{\circ}(-4u+1)^{t_-} T^\circ_2(2u) \, T^{\circ}_2(-2u)^{t_-}  \notag  \\ 
= {} & \tfrac12 R^\circ(-1)\, S^{\circ}_1(2u-1) \, R^{\circ}(-4u+1)^{t_-}  S^{\circ}_2(2u). \label{psi2Su1}
\end{align}
This shows that the image of the restriction of $\wh{\psi}_2$ to $X(\mfso_3,\mfso_3)^{tw}$ is contained in $Y^-(2)$. Surjectivity onto $Y^-(2)$ can be checked as in the proof of Proposition \ref{P:so3so3so2} given below. Since $\psi_2$ is an isomorphism, this restriction of $\wh{\psi}_2$ is also an isomorphism. 

Similar computations work for the restriction of $\wh{\psi}_2$ to $X(\mfso_3,\mfso_2)^{tw}$, except that the following changes must be taken into consideration. We now have $S(u) = T(u-1/4)\, K'(u)\, T(-u+1/4)^{t_+}$, where $K'(u) = (I - 4u\mcG')(1-4u)^{-1}$ with $\mcG'$ given above. We will also make use of the following identities: 
\spl{
& \tfrac12 R^\circ(-1) \, R^\circ(-4u+1)^{t_+}\,  K_1\,  K_2 = \tfrac12 R^\circ(-1)\,  K^{\prime\circ}(u) = K^{\prime\circ}(u)\,  \tfrac12 R^\circ(-1) , \\
& K_1 \, R(-4u+1)^{t_+} K_1 = \tfrac{4u}{4u-1}R(4u) ,\\
& K \, T^\circ(u)^{t_-}  K = T^\circ(u)^{t_+} , \label{3ids}
}
Similar computations to those in \eqref{psi2Su1} can now be performed that in addition require the following consequence of the RTT-relation: 
\[ 
T_2^\circ(2u) R^\circ(-4u+1)^{t_+}  T_1^\circ(-2u+1)^{t_+} =  T_1^\circ(-2u+1)^{t_+}  R^{\circ}(-4u+1)^{t_+} T_2^\circ(2u). 
\] 
Then the final result is that 
\eq{
\wh{\psi}_2(S(u)) =  \tfrac12 R^\circ(-1) \,  S_1^\circ(2u-1) \,  R^{\circ}(-4u+1)^{t_+}  S_2^\circ(2u)\, K_1 K_2. \label{psi2Su2}
}

\noindent Proof of \eqref{psi22}:  As follows from \eqref{psi2wu}, $\wh{\psi}_2$ maps the ideal of $X(\mfso_3,\mfso_3)^{tw}$ generated by the non-constant coefficients of $w(u)$ bijectively to the ideal of $Y^-(2)$ generated by the non-constant coefficients of $\gamma_2(u)^{-1}{\rm sdet}\,S^{\circ}(u)$. Therefore, $\wh{\psi}_2$ descends to an isomorphism between the quotient of $X(\mfso_3,\mfso_3)^{tw}$ by the first ideal (which is $Y(\mfso_3,\mfso_3)^{tw}$) and the quotient of $Y^-(2)$ by the second ideal (which is $SY^-(2)$). Exactly the same argument can be applied to $Y(\mfso_3,\mfso_2)^{tw}$ and $Y^+(2)$. \smallskip

\noindent Proof of \eqref{psi23}: Let $f(u)$ be an even power series in $u^{-1}$ with constant coefficient equal to $1$ and $\nu_f$ the automorphism of $X(\mfso_3,\mfso_3)^{tw}$ given by $S(u) \mapsto f(u-1/4) S(u)$. The series $f(u)$ can be factored as $f(u-1/4) = g(u-1/4)\,g(-u+1/4)$ and $g(u-1/4)$ can further be factored as $g(u-1/4) = h(2(u-1/4)+1/2)\, h(2(u-1/4)-1/2)$. (Both $g(u)$ and $h(u)$ can be taken to have constant term equal to $1$.) Set $h^{\circ}(u) = h(u)\,h(-u)$, so $f(u-1/4) = h^{\circ}(2u)\, h^{\circ}(2u-1)$. Then $\wh{\psi}_2 \circ \nu_f = \nu_{h^{\circ}} \circ \wh{\psi}_2$. This show that $\wh{\psi}_2$ establishes an isomorphism between the subalgebra of $X(\mfso_3,\mfso_3)^{tw}$ fixed by all the automorphisms $\nu_f$ (which is $Y(\mfso_3,\mfso_3)^{tw}$) and the subalgebra of $Y^-(2)$ fixed by all the automorphisms $\nu_{h^{\circ}(u)}$ with $h^{\circ}(u)$ even (which is $SY^-(2)$). Exactly the same argument works for $Y(\mfso_3,\mfso_2)^{tw}$ and $Y^+(2)$.
\end{proof}

Another way to check that the restriction of $\wh{\psi}_2$ to $X(\mfso_3,\mfso_3)^{tw}$ descends to an isomorphism between $Y(\mfso_3,\mfso_3)^{tw}$ and $SY^-(2)$ is to verify directly that $\Si^{\circ}(u)=\tfrac12 R^\circ(-1)\, \Sigma^{\circ}_1(2u-1) \, R^{\circ}(-4u+1)^{t_-}  \, \Sigma^{\circ}_2(2u)$ satisfies the unitarity condition $\Si^\circ(u)\,\Si^\circ(-u)=I$. Indeed, we have that $\Si^\circ(u)\,\Si^\circ(-u)$ is equal to 
\begin{align*}
 & {} \tfrac12 R^\circ(-1)\, \Sigma^{\circ}_1(2u-1) \, R^{\circ}(-4u+1)^{t_-} \Sigma^{\circ}_2(2u)\,  \tfrac12 R^\circ(-1)\, \Sigma^{\circ}_1(-2u-1) \, R^{\circ}(1+4u)^{t_-}  \Sigma^{\circ}_2(-2u) \\
& {} = \tfrac12 R^\circ(-1)\, \Sigma^{\circ}_1(2u-1) \, R^{\circ}(-4u+1)^{t_-} \Sigma^{\circ}_2(2u) \, \tfrac12 R^\circ(-1)\, \Sigma^{\circ}_1(-2u-1) \, R^{\circ}(1+4u)^{t_-}  \Sigma^{\circ}_2(-2u) \, \tfrac12 R^\circ(-1) \\
& {} \qq \text{ by the reflection equation and } ( \tfrac12 R^\circ(-1))^2 =  \tfrac12 R^\circ(-1); \\
& {} = \tfrac12 R^\circ(-1)\, \Sigma^{\circ}_1(2u-1) \, R^{\circ}(-4u+1)^{t_-} \Sigma^{\circ}_2(2u) \,\tfrac12 R^\circ(-1)  \Sigma^{\circ}_2(-2u-1) \, R^{\circ}(1+4u)^{t_-}  \Sigma^{\circ}_1(-2u) \, \tfrac12 R^\circ(-1) \\
& {} = \tfrac12 R^\circ(-1)\, \Sigma^{\circ}_1(2u-1) \, R^{\circ}(-4u+1)^{t_-} \Sigma^{\circ}_2(2u) \,  \Sigma^{\circ}_2(-2u-1) \, R^{\circ}(1+4u)^{t_-}  \Sigma^{\circ}_1(-2u) \, \tfrac12 R^\circ(-1) \\
& {} \qq \text{ after using twice the reflection equation;} \\
& {} = \tfrac12 R^\circ(-1)\, \Sigma^{\circ}_1(2u-1) \, R^{\circ}(-4u+1)^{t_-}  R^{\circ}(4u+1)^{t_-}  \Si^{\circ}_1(-2u) \, \tfrac12 R^\circ(-1) \\
& {}  \qq \text{ because } \Si^{\circ}_2(2u)\, \Sigma^{\circ}_2(-2u-1) = I, \text{ a consequence of \eqref{SYSB0} and \eqref{BB=I} }; \\
& {} = \tfrac12 R^\circ(-1)\, \Sigma^{\circ}_1(2u-1) \, \Sigma^{\circ}_1(-2u) \, \tfrac12 R^\circ(-1) \\
& {} \qq \text{ since } R^{\circ}(-4u+1)^{t_-} R^{\circ}(4u+1)^{t_-} =I; \\
& {} = \tfrac12 R^\circ(-1).
\end{align*}
Very similar computations work for $Y(\mfso_3,\mfso_2)^{tw}$. \smallskip

It is also possible to establish isomorphisms between the twisted Yangians $Y(\mfso_3,\mfso_{3-q})^{tw}$ and reflection algebras $\mcS\mcB(2,q)$.

\begin{prop}\label{Yso3so2pB2p}
We have isomorphisms $Y(\mfso_3,\mfso_{3-q})^{tw} \cong \mcS\mcB(2,q)$. Moreover, these can be extended to isomorphisms $X(\mfso_3,\mfso_{3-q})^{tw} \cong \mcB(2,q)$.
\end{prop}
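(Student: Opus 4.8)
The plan is to prove both isomorphisms purely by composing maps already at our disposal. For the first claim, I would take the isomorphism $Y(\mfso_3,\mfso_{3-q})^{tw}\iso SY^\pm(2)$ supplied by Proposition~\ref{P:so3:1}\eqref{psi23} (with the convention $q=0$ for $\pm=-$ and $q=1$ for $\pm=+$) and compose it with the isomorphism $SY^\pm(2)\iso\mcS\mcB(2,q)$ recalled in Section~\ref{Sec:4.2}. The latter is the shift map $\mcT^\circ(u)\mapsto\mcT^\circ(u+1/2)$, whose effect on the $S$-matrix is recorded in \eqref{SYSB0} when $q=0$ and in \eqref{SYSB1} when $q=1$. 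A composite of isomorphisms being an isomorphism, this at once yields $Y(\mfso_3,\mfso_{3-q})^{tw}\cong\mcS\mcB(2,q)$.

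For the extended statement the most direct route is to compose the isomorphism $\wh\psi_2:X(\mfso_3,\mfso_{3-q})^{tw}\iso Y^\pm(2)$ of Proposition~\ref{P:so3:1}\eqref{psi21} with the isomorphism $Y^\pm(2)\iso\mcB(2,q)$ recalled at the end of Section~\ref{Sec:4.2}, which lifts the special isomorphism used above through the tensor product decomposition \eqref{TY=TZ*TSY}. This gives $X(\mfso_3,\mfso_{3-q})^{tw}\cong\mcB(2,q)$ at once. The word ``extended'' in the statement is exactly this passage along \eqref{TY=TZ*TSY}: writing $X(\mfso_3,\mfso_{3-q})^{tw}\cong Z(\mfso_3,\mfso_{3-q})^{tw}\ot Y(\mfso_3,\mfso_{3-q})^{tw}$ and $\mcB(2,q)\cong\mcZ(2,q)\ot\mcS\mcB(2,q)$, the identification of the $Y$-factors from the first paragraph, together with the induced isomorphism of the central factors, assembles into the full isomorphism of the extended algebras.

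Should an explicit formula be wanted, one simply transports the formula of Proposition~\ref{P:so3:1}\eqref{psi21} along $S^\circ(u)\mapsto B^\circ(u+1/2)$ when $q=0$, or along $S^\circ(u)\mapsto -B^\circ(u+1/2)\,K$ when $q=1$; in the latter case the two sign factors cancel and the accumulated $K$-matrices simplify using $K^2=I$ and $K_1 K_2 = K_2 K_1$. Since every ingredient is already known to be an isomorphism, no genuine obstacle remains. The only point demanding care is the bookkeeping---keeping track of the spectral-parameter shifts and, for $q=1$, of the sign and of the $K$-matrix factors---so as to confirm that the special (unitarity-quotient) structure on each side is matched, which is precisely what guarantees that the first isomorphism lifts compatibly to the second.
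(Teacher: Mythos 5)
Your proof is correct, but it takes a genuinely different route from the paper's. The paper does not compose Proposition \ref{P:so3:1}\eqref{psi23} with the shift isomorphisms \eqref{SYSB0}--\eqref{SYSB1}; instead it restricts the map $\psi_2 : Y(\mfso_3) \iso SY(2)$ of \eqref{psi2SY} to $Y(\mfso_3,\mfso_{3-q})^{tw}$ viewed inside $Y(\mfso_3)$ and redoes the computation of \eqref{psi2Su1}, with $\mcT(-u-1/4)^{-1}$ in place of the transpose $\mcT(-u+1/4)^{t_+}$ (legitimate in $Y(\mfso_3)$, where $\mcT(u)^{t_+}=\mcT(u-1/2)^{-1}$), arriving at the explicit formula \eqref{psi2:SB} for $\psi_2(\Si(u))$ in terms of the $B^\circ$-matrix of $\mcS\mcB(2,q)$; the extension to $X(\mfso_3,\mfso_{3-q})^{tw}\cong\mcB(2,q)$ is then obtained from \eqref{TY=TZ*TSY} together with the fact that the centres of both algebras are polynomial rings in infinitely many variables. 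Your composition argument buys economy: every arrow is already known to be an isomorphism, so nothing new has to be computed, and the word ``extended'' comes for free, since \eqref{psi23} is the restriction of \eqref{psi21} and the isomorphism $Y^\pm(2)\cong\mcB(2,q)$ of \cite{MR} restricts to $SY^\pm(2)\cong\mcS\mcB(2,q)$. What the paper's direct computation buys is the closed formula \eqref{psi2:SB}, which explains the origin of the factor $\tfrac{4u}{4u-1}$ and is precisely the formula that reappears, lifted to the extended reflection algebras, in the corollary following Proposition \ref{P:so3lift}. One small caveat about your optional remark on explicit formulas: for $q=1$, transporting the formula of Proposition \ref{P:so3:1}\eqref{psi21} along $S^\circ(u)\mapsto -B^\circ(u+1/2)\,K$ does not simplify using only $K^2=I$ and commutativity of $K_1,K_2$; you also need the middle identity of \eqref{3ids}, namely $K_1\,R^\circ(-4u+1)^{t_+}K_1=\tfrac{4u}{4u-1}\,R^\circ(4u)$, to move $K_1$ past the transposed $R$-matrix, and it is exactly this step that produces the prefactor $\tfrac{4u}{4u-1}$ and converts $R^\circ(-4u+1)^{t_+}$ into $R^\circ(4u)$. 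Since that paragraph is an aside, this omission does not affect the validity of your proof.
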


\begin{proof}
The map $\psi_2$ gives an isomorphism between $Y(\mfso_3)$ and $SY(2)$ (see \eqref{psi2SY}). It can be restricted to the subalgebra $Y(\mfso_3,\mfso_{3-q})^{tw}$ of $Y(\mfso_3)$ and computations similar to those in \eqref{psi2Su1} can be performed. The difference is that $\mcT(-u-1/4)^{-1}$ is used instead of the transpose $\mcT(-u+1/4)^{t_+}$ giving
\eq{
\psi_2(\Si(u)) =  \tfrac{4u}{4u-1} \,\tfrac12 R^\circ(-1)\, B^\circ_1(2u-\tfrac12)\, R^\circ(4u)\, B^\circ_2(2u+\tfrac12), \label{psi2:SB}
}
where $B^\circ(u) \in \mcS\mcB(2,q)[[u^{-1}]]$. Here, we used the relations $\tfrac12 R^\circ(4u) R^\circ(-1)  = \tfrac{4u-1}{4u}\tfrac12 R^\circ(-1)$, \eqref{3ids} and the fact that, in $Y(\mfso_3)$, $\mcT(u)^{t_+} = \mcT(u-1/2)^{-1}$ and the inverse of $\psi_2(\mcT(u))$ is $\tfrac12 R^\circ(-1)\,\mcT^\circ_2(2u+1)^{-1} \mcT^\circ_1(2u)^{-1}$.  One difference between this and \eqref{psi2Su1} if $q=0$ or \eqref{psi2Su2} if $q=1$ is the factor $\frac{4u}{4u-1}$: this is explained by the fact that $R^{\circ}(-4u+1)^{t_-} = \frac{4u}{4u-1}\,  R^{\circ}(4u)$ or the middle identity in \eqref{3ids}, respectively. Thus it follows that $Y(\mfso_3,\mfso_{3-q})^{tw}$ is isomorphic to $\mcS\mcB(2,q)$. This isomorphism can be extended to an isomorphism between $X(\mfso_3,\mfso_{3-q})^{tw}$ and $\mcB(2,q)$ using the tensor product decompositions \eqref{TY=TZ*TSY} and the fact that the centres of both algebras are polynomial rings in infinitely many variables. 
\end{proof}


It is a natural question to ask if the isomorphisms in the previous two propositions also hold at the level of extended twisted Yangians. This is indeed the case.

\begin{prop}\label{P:so3lift}
The map
\eq{
\wt\psi_2 \;:\; \wt X(\mfso_3,\mfso_{3-q})^{tw} \to \wt Y^\pm(2), \qu \wt S(u) \mapsto \tfrac12 R^\circ(-1) \wt S^\circ_1(2u-1) R^\circ(-4u+1)^{t_\pm} \wt S^\circ_2(2u) (K_1 K_2)^q , \label{psi2lift}
}
viewed as an extension of the map  in Proposition \ref{P:so3:1} \eqref{psi21}, is an isomorphism of algebras.
\end{prop}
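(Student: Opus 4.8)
The plan is to first verify that \eqref{psi2lift} yields a well-defined algebra homomorphism, and then to promote it to an isomorphism by passing to the quotients by the central series $\msc(u)$, where it reduces to the isomorphism $\wh{\psi}_2$ of Proposition \ref{P:so3:1}\eqref{psi21}. Throughout, the right-hand side of \eqref{psi2lift} is read as an element of $\End V \ot \wt Y^\pm(2)[[u^{-1}]]$ by identifying $V$ with the image of the projector $\tfrac12 R^\circ(-1)$, exactly as in the discussion preceding Proposition \ref{P:so3:1}.

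First I would check that \eqref{psi2lift} respects the defining relations. Since $\wt X(\mfso_3,\mfso_{3-q})^{tw}$ is presented solely by the reflection equation \eqref{XRE} (with the $\mfso_3$ $R$-matrix, equivalently $R_V$ in the $V$-presentation) together with the initial condition $\wt s_{ij}^{(0)}=g_{ij}$, it suffices to show that the fused matrix $\wt\psi_2(\wt S(u))$ solves \eqref{XRE} and has the correct constant term (which fixes the placement of $K_1K_2$ and the normalization, as in the sign comparison used for $\varphi_1$ earlier). This is a fusion computation: starting from the twisted reflection equation \eqref{Ols:RE} satisfied by $\wt S^\circ(u)$ in $\wt Y^\pm(2)$, together with the Yang--Baxter equation \eqref{YBE} for $R^\circ$ and the projector identities $(\tfrac12 R^\circ(-1))^2=\tfrac12 R^\circ(-1)$ and $\tfrac12 R^\circ(-1)P=\tfrac12 R^\circ(-1)$, one moves the two copies of $\wt S^\circ$ past each other and collapses the result onto $V$; in the case $q=1$ the factors $K_1K_2$ are transported across by means of \eqref{3ids}. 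These manipulations are the unconstrained analogues of the computations \eqref{psi2Su1}--\eqref{psi2Su2} and of the unitarity verification following Proposition \ref{P:so3:1}, the essential difference being that in the extended setting $\wt Y^\pm(2)$ satisfies no symmetry or unitarity relation, so only \eqref{Ols:RE}, \eqref{YBE} and \eqref{3ids} may be invoked. This step — in particular, correctly converting the twisted reflection equation on $\C^2$ into the untwisted reflection equation on $V$ while tracking the transposes $t_\pm$ — is the main obstacle, though it is routine fusion once the bookkeeping is in place.

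Next I would identify the image of the central series. Applying $\wt\psi_2$ to the defining relation \eqref{p(u)c(u)} for $\msc(u)$ (with $\ka=\tfrac12$ and $\Tr(K(u))$ as dictated by $q$) and invoking the analogous relation defining $\msc^\circ(u)$ in $\wt Y^\pm(2)$, one obtains $\wt\psi_2(\msc(u))$ as a product $\msc^\circ(\,\cdot\,)\,\msc^\circ(\,\cdot\,)$ of the Olshanskii series at shifted arguments, in complete parallel with the computation \eqref{psi2wu} of $\wh{\psi}_2(w(u))$. In particular $\wt\psi_2$ carries the central polynomial subalgebra $\wt Z(\mfso_3,\mfso_{3-q})^{tw}$, generated by the coefficients of $\msc(u)$, isomorphically onto the central subalgebra $\wt Z^\circ \subset \wt Y^\pm(2)$ generated by the coefficients of $\msc^\circ(u)$, and it sends the ideal $(\msc(u)-1)$ onto $(\msc^\circ(u)-1)$. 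Consequently $\wt\psi_2$ descends modulo these ideals to a homomorphism $X(\mfso_3,\mfso_{3-q})^{tw}\to Y^\pm(2)$ which, by construction, is exactly $\wh{\psi}_2$, and which is an isomorphism by Proposition \ref{P:so3:1}\eqref{psi21}.

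Finally, bijectivity follows from the first isomorphism in \eqref{TX=TZX*TZ*TSY}: writing $\wt X(\mfso_3,\mfso_{3-q})^{tw}\cong \wt Z(\mfso_3,\mfso_{3-q})^{tw}\ot X(\mfso_3,\mfso_{3-q})^{tw}$ and $\wt Y^\pm(2)\cong \wt Z^\circ\ot Y^\pm(2)$, the map $\wt\psi_2$ sends the central factor isomorphically onto the central factor and induces the isomorphism $\wh{\psi}_2$ on the quotient factor, whence $\wt\psi_2$ is an isomorphism of algebras. An alternative, equally clean route to bijectivity is to equip both algebras with the ascending filtration of \cite{GR} (whose associated graded algebras are enveloping algebras of the relevant twisted current Lie algebras), to observe that the rescaling $u\mapsto 2u$ in \eqref{psi2lift} makes $\wt\psi_2$ filtration-preserving, and to check that $\gr\wt\psi_2$ is, up to nonzero scalars, the isomorphism of enveloping algebras induced by the symmetric-pair isomorphism \eqref{iso:1}; an isomorphism on associated graded algebras then lifts to the filtered algebras, giving the result.
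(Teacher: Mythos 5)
Your overall strategy is sound and in fact stitches together the two arguments that the paper itself uses: your first step (homomorphism via fusion of the twisted reflection equation, with the projector identities and \eqref{3ids}) is how the paper proves Proposition \ref{P:so3so3so2} for the equivalent map $\varphi'_q$, while your final step (bijectivity via the decomposition \eqref{TX=TZX*TZ*TSY}, an isomorphism on the central factor plus Proposition \ref{P:so3:1} on the complementary factor) is precisely the paper's own proof of Proposition \ref{P:so3lift}. Note, however, that the paper's proof of \ref{P:so3lift} avoids the fusion computation altogether: writing $\wt S^\circ(u)=\msv^\circ(u)\,S^\circ(u)$ with $\msc^\circ(u)=\msv^\circ(u)^2$ central, the image $\wt\psi_2(\wt S(u))$ factors as $\msv^\circ(2u-1)\,\msv^\circ(2u)$ times $\wh\psi_2(S(u))$, which satisfies the reflection equation by Proposition \ref{P:so3:1}; since the reflection equation is homogeneous quadratic in $\wt S$, multiplication by a central series preserves it, and the homomorphism property is immediate. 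Your fusion route is legitimate but much heavier on bookkeeping (the paper only carries it out for the untwisted target $\wt\mcB(2,q)$ and transports the result through $\phi_q$), and fusion by itself only yields a homomorphism (plus, with extra work, surjectivity), so you cannot dispense with the decomposition argument in any case.

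The genuine gap is in your second step. From $\wt\psi_2(\msc(u))=\msc^\circ(2u-1)\,\msc^\circ(2u)$ it does not follow ``in particular'' that $\wt\psi_2$ carries $\wt Z(\mfso_3,\mfso_{3-q})^{tw}$ isomorphically onto $\wt Z^\circ$, nor that the ideal $(\msc(u)-1)$ is mapped onto $(\msc^\circ(u)-1)$: the image of $\msc(u)$ is a product of two \emph{shifted} copies of $\msc^\circ$, not $\msc^\circ$ itself, so surjectivity onto $\C[\msc^\circ_1,\msc^\circ_3,\ldots]$ and the equality of ideals are exactly what must be proved. This is where the paper does its real work: both central subalgebras are polynomial algebras on the odd coefficients (by \cite{GR} and its analogue for $\wt Y^\pm(2)$), and expanding the product gives the triangular relations $\wt\psi_2(\msc_{2i+1})=2^{-2i}\msc^\circ_{2i+1}+A(\msc^\circ_1,\ldots,\msc^\circ_{2i-1})$, whose invertible leading coefficients yield the isomorphism of central subalgebras (and the correspondence of ideals). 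Without this computation your argument is incomplete at its crux. Finally, your alternative filtration route starts from an inaccurate premise: the associated graded algebras of the \emph{extended} algebras $\wt X(\mfso_3,\mfso_{3-q})^{tw}$ and $\wt Y^\pm(2)$ are not enveloping algebras of twisted current algebras --- that description applies to $Y(\mfso_3,\mfso_{3-q})^{tw}$, $SY^\pm(2)$, $\mcS\mcB(2,q)$, etc.; the extended versions carry the additional polynomial central factor $\wt Z$, so that route would require essentially the same analysis of the central part anyway.
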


\begin{proof}
We use  \eqref{TX=TZX*TZ*TSY}: here, $X(\mfso_3,\mfso_{3-q})^{tw}$ is identified with the subalgebra of $\wt X(\mfso_3,\mfso_{3-q})^{tw}$ generated by the coefficients of $\wt{S}(u) \msv(u)^{-1}$ where the central power series $\msv(u)$ is such that $\msc(u) = \msv(u)^2$ and the constant term of $\msv(u)$ is one (see \cite{GR}, Theorem 5.3). We also need that decomposition for $\wt{Y}^{\pm}(2)$ and corresponding power series $\msc^{\circ}(u), \msv^{\circ}(u)$ and $\wt{S}^{\circ}(u) \msv^{\circ}(u)^{-1}$. Thus we have that
$$
\wt\psi_2(\wt S(u)) = \wt\psi_2(\msv(u))\,\wt\psi_2(S(u)) = \msv^\circ(2u-1)\, \msv^\circ(2u)\, \tfrac12 R^\circ(-1) S^\circ_1(2u-1) R^\circ(-4u+1)^{t_\mp} S^\circ_2(2u) (K_1 K_2)^q.
$$
Since $\wt\psi_2(S(u))$ satisfies the reflection equation by Proposition \ref{P:so3:1} and $\wt\psi_2(\msv(u))$ is central, it follows that $\wt\psi_2(\wt S(u))$ also satisfies the reflection equation, hence $\wt\psi_2$ is an algebra homomorphism. From the above formula, we deduce that $\wt\psi_2(\msv(u)) = \msv^\circ(2u-1)\, \msv^\circ(2u)$ and it follows that $\wt\psi_2(\msc(u)) = \msc^{\circ}(2u-1) \,\msc^{\circ}(2u)$.  

To complete the proof that $\wt\psi_2$ is an isomorphism, due to the decomposition in \eqref{TX=TZX*TZ*TSY} and Proposition \ref{P:so3:1}, it is enough to see that it is an isomorphism when restricted to the subalgebra of $\wt X(\mfso_3,\mfso_{3-q})^{tw}$ generated by the coefficients of $\msc(u)$. It was proved in \cite{GR} that this subalgebra is a polynomial algebra generated by the odd coefficients of $\msc(u)$ and the same is true for $\msc^{\circ}(u)$ and $\wt{Y}^{\pm}(2)$. We thus have to see that the map given by $\wt\psi_2(\msc(u)) = \msc^{\circ}(2u-1)\, \msc^{\circ}(2u)$ is an isomorphism between $\C[\msc_1,\msc_3, \ldots]$ and $\C[\msc^{\circ}_1,\msc^{\circ}_3, \ldots]$. This is indeed the case because $\wt\psi_2(\msc_{2i+1}) = 2^{-2i} \msc_{2i+1}^{\circ} + A(\msc^\circ_{1},\ldots,\msc^\circ_{2i-1})$ for all $i\ge0$, where $A(\msc^\circ_{1},\ldots,\msc^\circ_{2i-1})$ is a polynomial expression.
\end{proof}

\begin{crl}
The map
\eq{
\wt X(\mfso_3,\mfso_{3-q})^{tw} \to \wt \mcB(2,q), \qu \wt S(u) \mapsto \tfrac{4u}{4u-1}\tfrac12 R^\circ(-1)\, \wt B^\circ_1(2u-\tfrac12)\, R^\circ(4u)\, \wt B^\circ_2(2u+\tfrac12) ,
}
is an isomorphism of algebras.
\end{crl}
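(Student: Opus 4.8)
The plan is to realise the stated map as the composite $\phi_q\circ\wt\psi_2$ of two isomorphisms that are already available. Proposition \ref{P:so3lift} supplies the isomorphism $\wt\psi_2:\wt X(\mfso_3,\mfso_{3-q})^{tw}\to\wt Y^\pm(2)$, and \eqref{Y->B:1}, \eqref{Y->B:2} (Proposition 4.3 of \cite{MR}) supply the isomorphisms $\phi_0:\wt Y^-(2)\to\wt\mcB(2,0)$ and $\phi_1:\wt Y^+(2)\to\wt\mcB(2,1)$. As each of these is an algebra isomorphism, the composite is automatically one, so there is no reflection equation left to verify: it suffices to evaluate $\phi_q\circ\wt\psi_2$ on the generating matrix $\wt S(u)$ and check that the outcome agrees with the assignment in the statement. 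The whole argument is thus a substitution into \eqref{psi2lift} followed by the simplification of a product of $R^\circ$- and $K$-matrices.

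For $q=0$ (so $\pm=-$), the map $\phi_0$ sends $\wt S^\circ(u)\mapsto\wt B^\circ(u+\tfrac12)$ and leaves the scalar $R^\circ$-matrices untouched. Applying it to \eqref{psi2lift} merely shifts the arguments $2u-1$ and $2u$ to $2u-\tfrac12$ and $2u+\tfrac12$, giving
\[
\tfrac12 R^\circ(-1)\,\wt B^\circ_1(2u-\tfrac12)\,R^\circ(-4u+1)^{t_-}\,\wt B^\circ_2(2u+\tfrac12).
\]
Rewriting the central factor by $R^\circ(-4u+1)^{t_-}=\tfrac{4u}{4u-1}R^\circ(4u)$, the identity already invoked in the proof of Proposition \ref{Yso3so2pB2p}, extracts the scalar $\tfrac{4u}{4u-1}$ and reproduces the claimed formula.

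For $q=1$ (so $\pm=+$) the map $\phi_1$ sends $\wt S^\circ(u)\mapsto-\wt B^\circ(u+\tfrac12)\,K$, while the image $\wt\psi_2(\wt S(u))$ from \eqref{psi2lift} carries an additional trailing factor $K_1K_2$. After substitution the two minus signs cancel and one is left with
\[
\tfrac12 R^\circ(-1)\,\wt B^\circ_1(2u-\tfrac12)\,K_1\,R^\circ(-4u+1)^{t_+}\,\wt B^\circ_2(2u+\tfrac12)\,K_2K_1K_2.
\]
Here lies the only delicate point, the bookkeeping of the $K$-matrices: using $K^2=I$ and the fact that $K_1$ and $K_2$ act on different tensor factors, the tail $K_2K_1K_2$ collapses to $K_1$, which may then be commuted leftward past $\wt B^\circ_2(2u+\tfrac12)$ so as to flank $R^\circ(-4u+1)^{t_+}$. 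The middle identity of \eqref{3ids}, namely $K_1R^\circ(-4u+1)^{t_+}K_1=\tfrac{4u}{4u-1}R^\circ(4u)$, then turns the expression into exactly the formula of the statement, carrying the same prefactor $\tfrac{4u}{4u-1}$ as in the case $q=0$. I expect this $K$-bookkeeping to be the main (and essentially only) obstacle; the remainder of the proof is forced once the map is recognised as a composite of known isomorphisms.
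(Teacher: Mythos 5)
Your proposal is correct and follows exactly the paper's own (one-line) proof: compose the isomorphism $\wt\psi_2$ of Proposition \ref{P:so3lift} with $\phi_q$ from (\ref{Y->B:1})--(\ref{Y->B:2}). The only difference is that you additionally carry out the substitution and the $K$-matrix/scalar bookkeeping (via $R^\circ(-4u+1)^{t_-}=\tfrac{4u}{4u-1}R^\circ(4u)$ and the middle identity of \eqref{3ids}) that the paper leaves implicit, and these computations are accurate.
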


\begin{proof}
This follows by composing the isomorphism \eqref{psi2lift} with the isomorphism given in (\ref{Y->B:1}-\ref{Y->B:2}).
\end{proof}


The isomorphism between the extended reflection algebras $\wt X(\mfso_3,\mfso_{3-q})^{tw}$ and $\wt\mcB(2,q)$ (and consequently between  $\wt X(\mfso_3,\mfso_{3-q})^{tw}$ and $\wt Y^\pm(2)$) can also be established in a direct way. For this we need some additional identities. 
Recall that the matrix $\tfrac{1}{2}R^\circ(-1)$ is a projector of $\C^2\ot\C^2$ to the subspace $V$. Hence the matrix $R_V(u)$ is the restriction to $V \otimes V$ of the element of $\End(\C^2)^{\otimes 4}$ given by (see proof of \mbox{\cite[Proposition 4.4]{AMR}})
\eqa{
R_V(u) &=  \tfrac{1}{4} R^\circ_{12}(-1)\,R^\circ_{34}(-1)\,R^\circ_{14}(2u-1)\,R^\circ_{13}(2u)\,R^\circ_{24}(2u)\,R^\circ_{23}(2u+1)  \label{RV:0} \\
& =  \tfrac{1}{4} R^\circ_{23}(2u+1)\,R^\circ_{24}(2u)\,R^\circ_{13}(2u)\,R^\circ_{14}(2u-1)\,R^\circ_{12}(-1)\,R^\circ_{34}(-1). \label{RV:1}
}
Here the second line follows by application of the Yang-Baxter equation \eqref{YBE}. Moreover, we have the identity
\eq{ \label{RRR=R}
\tfrac{1}{4} R_V(u)\,R^\circ_{12}(-1)\,R^\circ_{34}(-1) = R_V(u) =\tfrac{1}{4} R_{12}^{\circ}(-1) R_{34}^{\circ}(-1) R_V(u).
}
By inserting the identity matrix $I^\circ=P^\circ_{34}P^\circ_{34}$ and using \eqref{RV:1} we can rewrite $R_V(u)$ as
\eqa{ \label{RV:2}
R_V(u) &= \tfrac{1}{16} R^\circ_{12}(-1)\,R^\circ_{34}(-1)\,P^\circ_{34}\,P^\circ_{34}\,R^\circ_{14}(2u-1)\,R^\circ_{13}(2u)\,R^\circ_{24}(2u)\,R^\circ_{23}(2u+1)\,P^\circ_{34}\,P^\circ_{34}\,R^\circ_{12}(-1)\,R^\circ_{34}(-1) \el
& = \tfrac{1}{16} R^\circ_{12}(-1)\,R^\circ_{34}(-1)\,P^\circ_{34}\,R^\circ_{13}(2u-1)\,R^\circ_{14}(2u)\,R^\circ_{23}(2u)\,R^\circ_{24}(2u+1)\,P^\circ_{34}\,R^\circ_{12}(-1)\,R^\circ_{34}(-1) \el
& = \tfrac{1}{4} R^\circ_{12}(-1)\,R^\circ_{34}(-1)\,R^\circ_{13}(2u-1)\,R^\circ_{14}(2u)\,R^\circ_{23}(2u)\,R^\circ_{24}(2u+1) ,
}
where in the last step we have used the relation $R^\circ_{34}(-1)\,P^\circ_{34} = R^\circ_{34}(-1)$ and the Yang-Baxter equation. In a similar way, using $I^\circ={P^\circ_{12}P^\circ_{12}}$, we find another form of $R_V(u)$, namely
\spl{ \label{RV:3}
R_V(u) &= \tfrac{1}{4} R^\circ_{12}(-1)\,R^\circ_{34}(-1)\, R^\circ_{23}(2u-1)\,R^\circ_{24}(2u)\,R^\circ_{13}(2u)\,R^\circ_{14}(2u+1) ,
}
which will be used in proving Proposition \ref{P:so3so3so2} below.

\begin{prop}\label{P:so3so3so2} 
We have algebra isomorphisms
\eqa{
& \varphi_q \;:\; \wt X(\mfso_3,\mfso_{3-q})^{tw} \to \wt \mcB(2,q) , \quad \wt S(u) \mapsto \tfrac{4u}{4u-1} \tfrac{1}{2} R^\circ_{12}(-1)\,\wt B^\circ_1(2u-1/2)\,R^\circ_{12}(4u)\, \wt B^\circ_2(2u+1/2) , \label{P62:1} \\
& \varphi'_q \;:\; \wt X(\mfso_3,\mfso_{3-q})^{tw} \to \wt Y^{\pm}(2) , \quad \wt S(u) \mapsto \tfrac{1}{2} R^\circ_{12}(-1)\, \wt S^\circ_1(2u-1)\,R^\circ_{12}(-4u+1)^{t_{\pm}}\, \wt S^\circ_2(2u) (K_1 \, K_2)^q. \label{P62:3} 
}
\end{prop}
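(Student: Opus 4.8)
The plan is to verify by hand that the stated images of $\wt S(u)$ satisfy the defining reflection equation \eqref{XRE} of $\wt X(\mfso_3,\mfso_{3-q})^{tw}$, thereby re-deriving the isomorphisms of Proposition \ref{P:so3lift} and its Corollary \emph{directly}, without routing through the tensor-product decomposition \eqref{TX=TZX*TZ*TSY}. Since $\varphi'_q$ is given by the same formula as the map $\wt\psi_2$ of \eqref{psi2lift} and $\varphi_q$ by the same formula as the composite in the Corollary to Proposition \ref{P:so3lift}, the bijectivity is already available; the genuinely new content is the direct computation, on which I would concentrate, afterwards noting that the two maps are intertwined by the isomorphism $\phi_q$ of \eqref{Y->B:1}--\eqref{Y->B:2}, so that it is enough to treat one of them.

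I would treat $\varphi_q$ first, since both the target relation \eqref{MoRa:RE} and the fused presentations \eqref{RV:0}--\eqref{RV:3} of $R_V$ are written in terms of the untwisted $R$-matrix $R^\circ$. The argument is a fusion of reflection equations. Work in $(\C^2)^{\ot 4}$, placing the first copy of $V$ in the tensor slots labelled $1,2$ and the second in the slots $3,4$; then $\varphi_q(\wt S_1(u))$ is assembled from $\wt B^\circ$ acting in the slots $1,2$, $\varphi_q(\wt S_2(v))$ from $\wt B^\circ$ in the slots $3,4$, and each factor $R_V(u\pm v)$ expands, through one of the forms \eqref{RV:0}--\eqref{RV:3}, into a product of mixed factors $R^\circ_{ab}(\cdot)$ with $a\in\{1,2\}$, $b\in\{3,4\}$ together with the two projectors $\tfrac12 R^\circ_{12}(-1)$ and $\tfrac12 R^\circ_{34}(-1)$. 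Substituting these expansions turns each side of the reflection equation \eqref{XRE} for $\varphi_q(\wt S(u))$ into a long monomial in $R^\circ$'s and $\wt B^\circ$'s in $\End((\C^2)^{\ot 4})\ot\wt\mcB(2,q)$, and the task is to carry the left-hand side to the right-hand side. The engine of the computation is the $A$-type reflection equation \eqref{MoRa:RE} for $\wt B^\circ$, applied in turn to each of the four mixed pairs of slots $(1,3),(1,4),(2,3),(2,4)$; the shifts $\pm\tfrac12$ in the arguments of $\wt B^\circ$ and the shifts $\pm1$ built into \eqref{RV:0}--\eqref{RV:3} are arranged precisely so that the difference and the sum of the two $\wt B^\circ$ arguments equal the arguments of the factors $R^\circ_{ab}$ supplied by $R_V(u-v)$ and $R_V(u+v)$. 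Because no single form of $R_V$ places the correct $R^\circ_{ab}$ next to all four pairs simultaneously, one moves between the forms \eqref{RV:0}--\eqref{RV:3} using the Yang--Baxter equation \eqref{YBE}, bringing the needed factor adjacent to its pair before each application of \eqref{MoRa:RE}, and cleans up with the projector identities $(\tfrac12 R^\circ(-1))^2=\tfrac12 R^\circ(-1)$, $R^\circ_{ab}(-1)\,P^\circ_{ab}=R^\circ_{ab}(-1)$ and \eqref{RRR=R}. This rearrangement is the main obstacle: it is mechanical but long, and the entire difficulty is the bookkeeping of which factor sits in which pair of slots at which shifted parameter, so that the correct instance of \eqref{MoRa:RE} or \eqref{YBE} is invoked at each step; the closest model is the direct verification of the unitarity relation displayed just after Proposition \ref{P:so3:1}.

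The case $q=1$ proceeds along the same lines once the trailing factor $K_1K_2$ is commuted to where it is needed, using the $K$-conjugation identities in \eqref{3ids} (equivalently \eqref{so-gl}); these relate $K\,R^\circ(\cdot)^{t_-}K$ to $R^\circ(\cdot)^{t_+}$ and supply the sign that distinguishes $\wt\mcB(2,1)$ from $\wt\mcB(2,0)$. With $\varphi_q$ established as an algebra homomorphism, the statement for $\varphi'_q$ follows by composing with $\phi_q^{-1}$ from \eqref{Y->B:1}--\eqref{Y->B:2}: the identity $R^\circ(-4u+1)^{t_-}=\tfrac{4u}{4u-1}R^\circ(4u)$ (and its $t_+$ counterpart via the middle relation of \eqref{3ids}) converts the formula \eqref{P62:1} into \eqref{P62:3}, so that $\varphi'_q=\phi_q^{-1}\circ\varphi_q$ and is therefore a homomorphism as well. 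For bijectivity, I would observe that $\varphi_q$ and $\varphi'_q$ are given by exactly the same formulas as the maps already shown to be isomorphisms — the Corollary to Proposition \ref{P:so3lift} and the map \eqref{psi2lift} of Proposition \ref{P:so3lift}, respectively — so bijectivity is inherited; alternatively, one argues self-containedly by equipping both algebras with their natural ascending filtrations, checking that $\varphi_q$ is filtered, and verifying via the classical limit \eqref{Y->U} that the induced map on associated graded algebras is an isomorphism of enveloping algebras of the twisted current Lie algebras matched by \eqref{iso:1}.
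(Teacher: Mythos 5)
Your verification of the homomorphism property is precisely the paper's own argument: the paper also works in $(\C^2)^{\ot4}$, expands the two occurrences of $R_V$ via \eqref{RV:1}, \eqref{RV:2}, \eqref{RV:3}, absorbs the projectors using \eqref{RRR=R} and the identities $\tfrac12\wt B^\circ_{1'}(2u)\,R^\circ_{12}(-1)=\wt B^\circ_{1'}(2u)$ of \eqref{P52:2} (themselves consequences of \eqref{MoRa:RE}), and then performs the rearrangement by repeated use of \eqref{YBE} and \eqref{MoRa:RE} on the mixed pairs of slots, modelled on the fused reflection equation of \cite{BaRe}; it also obtains $\varphi'_q$ from $\varphi_q$ exactly as you do, via $\varphi_q=\phi_q\circ\varphi'_q$ and the identity $R^\circ(-4u+1)^{t_-}=\tfrac{4u}{4u-1}R^\circ(4u)$, respectively the middle identity of \eqref{3ids}. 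Where you diverge is bijectivity. The paper does \emph{not} inherit surjectivity: it computes the images of the entries $\wt s_{00}(u),\wt s_{01}(u),\wt s_{11}(u),\dots$ explicitly via the bilinear-form recipe and shows by induction on $r$ that $\wt s^{(r)}_{11}\mapsto 2^{-r+1}\,\tl{b}^{\circ(r)}_{11}+A_{r-1}(\cdots)$ with $A_{r-1}$ a polynomial in lower coefficients, so that all generators of $\wt\mcB(2,q)$ lie in the image; only injectivity is quoted from Proposition \ref{P:so3lift}. Your wholesale inheritance of bijectivity is logically sound, since \eqref{P62:3} is verbatim \eqref{psi2lift} and \eqref{P62:1} is verbatim the map of its corollary, but it reduces the proposition to a restatement of those earlier results; the paper's stated purpose here, and the reason it reproves surjectivity by hand, is to give a direct verification.

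Two further soft spots. First, your remark that the $q=1$ case of $\varphi_q$ requires commuting a trailing $K_1K_2$ is spurious: formula \eqref{P62:1} contains no $K$'s for either value of $q$, and the reflection-equation check is identical for $q=0$ and $q=1$ (only the constant terms $\epsilon_i$ of $\wt B^\circ(u)$ differ); the $K$-identities of \eqref{3ids} enter only when passing to \eqref{P62:3} through $\phi_1$, where you indeed use them correctly. Second, your self-contained alternative for bijectivity does not work as stated: the associated graded algebras of the \emph{extended} algebras $\wt X(\mfso_3,\mfso_{3-q})^{tw}$ and $\wt\mcB(2,q)$ are not enveloping algebras of the twisted current Lie algebras matched by \eqref{iso:1}, because by \eqref{TX=TZX*TZ*TSY} both algebras contain large central polynomial subalgebras (generated by the coefficients of $\msc(u)$, $w(u)$, resp.\ of $f^\circ(u)$ and the Sklyanin determinant), and \eqref{Y->U} describes $\gr\,\mcB(2,q)$, not $\gr\,\wt\mcB(2,q)$. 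Making that classical-limit argument precise forces you to track the images of the central series, which is exactly the computation $\wt\psi_2(\msc(u))=\msc^\circ(2u-1)\,\msc^\circ(2u)$ carried out in Proposition \ref{P:so3lift}; so the filtration route does not in fact bypass the decomposition you were hoping to avoid.
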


\begin{proof}
We first prove that $\varphi_q$ is a surjective algebra homomorphism; it will follow from \eqref{Y->B:1} and \eqref{Y->B:2} that $\varphi'_q$ is as well. The proof of injectivity for $\varphi'_q$ is identical to the one given in Proposition \ref{P:so3lift} and injectivity of $\varphi_q$ then follows from $\varphi_q = \phi_q \circ \varphi'_q$. 

Let us start by verifying that the map \eqref{P62:1} is an algebra homomorphism. We need to check that the reflection equation
\eq{ \label{RE:V}
R_V(u-v)\,\wt S_{1'}(u)\,R_V(u+v)\,\wt S_{2'}(v) = \wt S_{2'}(v)\,R_V(u+v)\,\wt S_{1'}(u)\,R_V(u-v) 
}
remains valid when $\wt S_{1'}(u)$ and $\wt S_{2'}(v)$ are replaced by their images under $\varphi_0$ and $\varphi_1$. Here the primed indices denote the copies of the space $V$ in the tensor product $V\ot V$. We use \eqref{RV:1} for $R_V(u-v)$ in the left-hand side, \eqref{RV:3} for $R_V(u-v)$ in the right-hand side, and \eqref{RV:2} for both $R_V(u+v)$. In such a way we obtain an equation which is of essentially the same form as the fused projected reflection equation given in Theorem 4.2 in \cite{BaRe}. The explicit form of the left-hand side of \eqref{RE:V} is $\tfrac{4u}{4u-1}\tfrac{4v}{4v-1}$ times
\spl{ \label{P52:1}
& \tfrac{1}{64} R^\circ_{12}(-1)\,R^\circ_{34}(-1)\,R^\circ_{14}(2u-2v-1)\,R^\circ_{13}(2u-2v)\,R^\circ_{24}(2u-2v)\,R^\circ_{23}(2u-2v+1) \\
& \times R^\circ_{12}(-1)\, \wt B^\circ_1(2u-1/2)\,R^\circ_{12}(4u)\, \wt B^\circ_2(2u+1/2) \\
& \times R^\circ_{12}(-1)\,R^\circ_{34}(-1)\,R^\circ_{13}(2u+2v-1)\,R^\circ_{14}(2u+2v)\,R^\circ_{23}(2u+2v)\,R^\circ_{24}(2u+2v+1) \\
& \times R^\circ_{34}(-1)\, \wt B^\circ_3(2v-1/2)\,R^\circ_{34}(4v)\, \wt B^\circ_4(2v+1/2) .
}
By \eqref{MoRa:RE} we have
\eq{ \label{P52:2}
\tfrac{1}{2} \wt B^\circ_{1'}(2u)\, R^\circ_{12}(-1) = \wt B^\circ_{1'}(2u) \qquad\text{and}\qquad \tfrac{1}{2} \wt B^\circ_{2'}(2v)\, R^\circ_{34}(-1) = \wt B^\circ_{2'}(2v) ,
}
where $\wt B^\circ_{1'}(2u)$ (resp.\ $\wt B^\circ_{2'}(2v)$) denotes the image of $\wt S_{1'}(2u)$ (resp.\ $\wt S_{2'}(2v)$) under the map $\varphi_i$. Indeed,
\eqn{
\tfrac{1}{4} \wt B^\circ_{1'}(2u)\, R^\circ_{12}(-1) &= \tfrac{4u}{4u-1}\tfrac{1}{4} R^\circ_{12}(-1)\,\wt  B^\circ_1(2u-1/2)\,R^\circ_{12}(4u)\,\wt  B^\circ_2(2u+1/2)\, R^\circ_{12}(-1) \\
&= \tfrac{4u}{4u-1} \tfrac{1}{4} \wt B^\circ_2(2u+1/2)\,R^\circ_{12}(4u)\,\wt  B^\circ_1(2u-1/2)\, R^\circ_{12}(-1)\, R^\circ_{12}(-1) = \wt B^\circ_{1'}(2u) , 
}
where in the second equality we used \eqref{MoRa:RE} and in the last equality we used the idempotence property $(\tfrac{1}{2} R^\circ_{12}(-1))^2=\tfrac{1}{2} R^\circ_{12}(-1)$ and \eqref{MoRa:RE} once again. The second relation in \eqref{P52:2} follows in a similar way. Then, using \eqref{RRR=R} and the identities above, we can shift all the projectors in \eqref{P52:1} leftwards, thus obtaining the projector $\tfrac{1}{4} R^\circ_{12}(-1)\,R^\circ_{34}(-1)$ times $\tfrac{4u}{4u-1}\tfrac{4v}{4v-1}$ and times
\eqn{
& R^\circ_{14}(2u-2v-1)\,R^\circ_{13}(2u-2v)\,R^\circ_{24}(2u-2v)\,R^\circ_{23}(2u-2v+1) \\
& \times \wt B^\circ_1(2u-1/2)\,R^\circ_{12}(4u)\, \wt B^\circ_2(2u+1/2) \\
& \times R^\circ_{13}(2u+2v-1)\,R^\circ_{14}(2u+2v)\,R^\circ_{23}(2u+2v)\,R^\circ_{24}(2u+2v+1) \el
& \times \wt B^\circ_3(2v-1/2)\,R^\circ_{34}(4v)\, \wt B^\circ_4(2v+1/2) , 
}
which is equivalent to the left-hand side of the fused reflection equation given in Theorem 4.1 in \cite{BaRe}. Now in the same way as in the proof of Theorem 4.1 in \cite{BaRe} (i.e.\ using the Yang-Baxter and the reflection equations multiple times) we obtain
\eqn{
& \wt B^\circ_3(2v-1/2)\,R^\circ_{34}(4v)\,\wt  B^\circ_4(2v+1/2) \\
& \times R^\circ_{13}(2u+2v-1)\,R^\circ_{14}(2u+2v)\,R^\circ_{23}(2u+2v)\,R^\circ_{24}(2u+2v+1) \el
& \times \wt B^\circ_1(2u-1/2)\,R^\circ_{12}(4u)\,\wt  B^\circ_2(2u+1/2) \\
& \times R^\circ_{14}(2u-2v-1) R^\circ_{24}(2u-2v) R^\circ_{13}(2u-2v) R^\circ_{23}(2u-2v+1). 
}
Then, using \eqref{RRR=R} and \eqref{P52:2}, we put the projectors into the required places and use \eqref{RV:0}, \eqref{RV:1} to obtain $\tfrac{4u}{4u-1}\tfrac{4v}{4v-1}$ times
\eqn{
& \tfrac{1}{64} R^\circ_{34}(-1)\, \wt B^\circ_3(2v-1/2)\,R^\circ_{34}(4v)\, \wt B^\circ_4(2v+1/2) \\
& \times R^\circ_{12}(-1)\,R^\circ_{34}(-1)\,R^\circ_{13}(2u+2v-1)\,R^\circ_{14}(2u+2v)\,R^\circ_{23}(2u+2v)\,R^\circ_{24}(2u+2v+1) \\
& \times R^\circ_{12}(-1)\, \wt B^\circ_1(2u-1/2)\,R^\circ_{12}(4u)\,\wt  B^\circ_2(2u+1/2) \el
& \times R^\circ_{12}(-1)\,R^\circ_{34}(-1)\,R^\circ_{23}(2u-2v-1)\,R^\circ_{24}(2u-2v)\,R^\circ_{13}(2u-2v)\,R^\circ_{14}(2u-2v+1) , 
}
which is exactly the image of the right-hand side of \eqref{RE:V}. 

We now show that the homomorphism $\varphi_q$ is surjective. The images of the matrix elements $\wt s_{ij}(u)$ of $\wt S(u)$ are found as using the method explained above \eqref{K-image}, that is  $ \wt s^\circ_{ij}(u) =\langle{ v_i \, |\, \wt S^\circ(u)\, v_j \rangle}$. In such a way, we find for instance:
\eqn{
\wt s_{00}(u) & \mapsto \tfrac{1}{2(4u-1)} \big((4 u-1)(\tl{b}^\circ_{1,-1}(2 u-1/2)\,\tl{b}^\circ_{-1,1}(2 u+1/2)+\tl{b}^\circ_{-1,1}(2 u-1/2)\,\tl{b}^\circ_{1,-1}(2 u+1/2) ) \\
& \qq\qq\qq + \tl{b}^\circ_{11}(2 u-1/2)(4 u\,\tl{b}^\circ_{-1,-1}(2 u+1/2)-\tl{b}^\circ_{11}(2 u+1/2)) \\
& \qq\qq\qq - \tl{b}^\circ_{-1,-1}(2 u-1/2)(\tl{b}^\circ_{-1,-1}(2 u+1/2)-4 u\,\tl{b}^\circ_{11}(2 u+1/2)) \big), \allowdisplaybreaks\\
\wt s_{01}(u) & \mapsto \tfrac{1}{\sqrt{2}(4u-1)} \big((\tl{b}^\circ_{-1,-1}(2 u-1/2)-4 u\,\tl{b}^\circ_{11}(2 u-1/2))\,\tl{b}^\circ_{-1,1}(2 u+1/2)\el
& \qq\qq\qq +(1-4 u)\,\tl{b}^\circ_{-1,1}(2 u-1/2)\,\tl{b}^\circ_{11}(2 u+1/2) \big), \allowdisplaybreaks\\
\wt s_{11}(u) & \mapsto \tfrac{1}{4u-1} \big((4 u-1)\,\tl{b}^\circ_{1,1}(2 u-1/2)\,\tl{b}^\circ_{11}(2 u+1/2) - \tl{b}^\circ_{1,-1}(2 u-1/2)\,\tl{b}^\circ_{-1,1}(2 u+1/2) \big) . \nn
}
We have $\tl b_{ij}^{\circ}(u) =  \delta_{ij} \epsilon_i  + \mysum_{r\ge 1} \tl b_{ij}^{\circ(r)} u^{-r}$.
Then, by taking coefficients at $u^{-r}$ on both sides of the map, we find
$$
\wt s^{(r)}_{11} \mapsto 2^{-r+1}\, \tl b^{\circ(r)}_{11} + A_{r-1}\big(\tl b^{\circ(s_1)}_{11} , \tl b^{\circ(s_2)}_{1,-1} , \tl b^{\circ(s_3)}_{-1,1}  \big)_{s=1}^{r-1}
$$
for the case $q=0$ and for any $r\geq1$; here $A_{r-1}(\ldots)$ denotes a polynomial in the  generators $\tl b^{\circ(s_1)}_{11}, \tl b^{\circ(s_2)}_{1,-1}, \tl b^{\circ(s_3)}_{-1,1}$ with $1\leq s_1,s_2,s_3\leq r-1$. This shows by induction on $r$ that each generator $\tl b^{(r)\circ}_{11}$ with $r\geq1$ belongs to the image of the homomorphism. In a similar way, by considering the images of $\wt s^{(r)}_{-1,0}$, $\wt s^{(r)}_{0,-1}$ and $\wt s^{(r)}_{-1,-1}$, we obtain equivalent properties for the generators $\tl b^{\circ(r)}_{-1,1}$, $\tl b^{\circ(r)}_{1,-1}$ and $\tl b^{\circ(r)}_{-1,-1}$, respectively, thus proving surjectivity in the $q=0$ case. The surjectivity in the $q=1$ case is obtained using the same approach and considering the images of the generators $\wt s^{(r)}_{11}$, $\wt s^{(r)}_{-1,0}$, $\wt s^{(r)}_{0,-1}$ and $\wt s^{(r)}_{-1,-1}$, respectively.
\end{proof}

\smallskip


In the following three sections, we turn our attention to those twisted Yangians associated to symmetric pairs of the form $(\mfso_4,\mfso_4^\rho)$. It will be convenient to 
 denote $\wt{u}=u-\ka/2=u-1/2$. Furthermore, we will continue to make use of the matrix $K=E_{11}-E_{-1,-1}$ throughout these sections.


\subsection{Twisted Yangians for the symmetric pair  \texorpdfstring{$(\mf{so}_4,\mf{gl}_2)$}{X(so4,gl2)}}

Our investigation will make use of the tensor products $Y(2)\otimes Y(2)$ and 
$Y^+(2)\otimes Y^-(2)$.  In order to distinguish the two copies of $Y(2)$ in the tensor product $Y(2)\otimes Y(2)$, we will follow the same convention as used in \eqref{X->Y:3}. That is, we will denote the generator series corresponding to the first copy of $Y(2)$ by $t^\circ_{ij}(u)$, and the generator series 
corresponding to the second copy by $t^\bullet_{ij}(u)$, where $i,j\in \{-1,1\}$. As usual, the formal series $t^\circ_{ij}(u)$ are arranged into the matrix $T^\circ(u)$, and the formal series $t^\bullet_{ij}(u)$ are arranged into the matrix $T^\bullet(u)$. Similarly, we denote the generator series corresponding to the orthogonal twisted Yangian in $Y^+(2)\otimes Y^-(2)$
by $s^\circ_{ij}(u)$, and those corresponding to the symplectic twisted Yangian by $s^\bullet_{ij}(u)$, where $i,j\in \{-1,1\}$. These are then arranged into the matrices $S^\circ(u)$ and $S^\bullet(u)$, respectively. 

\smallskip 

Let $\{e_{-1},e_{1}\}$ be the standard basis of $\C^2$ and set $V=\C^2\otimes \C^2$, with the ordered basis $\{v_{-2},v_{-1},v_{1},v_{2}\}$ given as follows: 
$$
 v_{-2}=e_{-1}\otimes e_{-1}, \enspace v_{-1}=e_{-1}\otimes e_{1}, \enspace v_1=e_1\otimes e_{-1},\enspace v_2=-e_{1}\otimes e_1.
$$
Then we may consider $T^\circ(u)\,T^\bullet(u)$ as an element of $\End V \otimes \left( Y(2)\otimes Y(2)\right)[[u^{-1}]]$ and, in particular, it is the image 
of the $T$-matrix $T(u)$ of $X(\mfso_4)$ under the embedding $\psi_3$ given in \eqref{X->Y:3}.

\smallskip

Restricting $\psi_3$ to the subalgebra $X(\mfso_4,\mfgl_2)^{tw}$ of $X(\mfso_4)$ leads to the following result:
  
\begin{prop}\label{DIII:Prop.embedding}
The assignment
\begin{equation}
 S(u)\mapsto S^\circ(u-1/2)\,K_1 \,S^\bullet(u-1/2) \label{DIII:embedding.1}
\end{equation}
defines an embedding $\varphi_0:X(\mathfrak{so}_4,\mathfrak{gl}_2)^{tw}\hookrightarrow Y^+(2)\otimes Y^-(2)$. 
\end{prop}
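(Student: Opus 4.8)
The plan is to realize $\varphi_0$ as the restriction of the embedding $\psi_3\colon X(\mfso_4)\hookrightarrow Y(2)\ot Y(2)$ of \eqref{X->Y:3} to the subalgebra $X(\mfso_4,\mfgl_2)^{tw}$. Since $\psi_3$ is an injective algebra homomorphism, its restriction is automatically an injective algebra homomorphism, so the only thing to verify is that $\psi_3(S(u))$ equals the right-hand side of \eqref{DIII:embedding.1}; this simultaneously shows that the stated assignment is realized by $\psi_3$ and that its image lies in $Y^+(2)\ot Y^-(2)$, because every coefficient of $S^\circ(u-1/2)$ lies in $Y^+(2)$ and every coefficient of $S^\bullet(u-1/2)$ lies in $Y^-(2)$. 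Two preliminary identifications drive the computation. First, with $N=4$ and $\ka=1$, the type-DIII $S$-matrix is $S(u)=T(u-1/2)\,\mcG\,T(-u+1/2)^{t_+}$ by \eqref{S=TGT}. Second, in the ordered basis $\{v_{-2},v_{-1},v_1,v_2\}$ of $V=\C^2\ot\C^2$ the matrix $\mcG=\mathrm{diag}(-1,-1,1,1)$ acts on $v_a$ with eigenvalue $\mathrm{sign}(a)$, which is exactly how $K_1=K\ot I$ (with $K=E_{11}-E_{-1,-1}$) acts; hence $\mcG=K_1$ as operators on $V$. This is the origin of the matrix $K_1$ in \eqref{DIII:embedding.1}.

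The key lemma is a factorization of the orthogonal transpose. Because the invariant symmetric form on $V=\C^2\ot\C^2$ is the product of the symplectic forms on the two factors, the $\mfso_4$-transpose $t_+$ on $\End V$ agrees, as an anti-automorphism, with $t_-\ot t_-$; this is checked on the matrix units $E_{ab}^V$ by bookkeeping the signs $\epsilon_a$ attached to the basis vectors (with $v_2=-e_1\ot e_1$ carrying the crucial sign $-1$), in the same spirit as the $\mfso_3$ computation \eqref{psi2Tt}. Since $\psi_3$ acts on the algebra coefficients while $t_+$ acts on the matrix indices, it follows that
$$
\psi_3\big(T(v)^{t_+}\big)=\big(\psi_3 T(v)\big)^{t_+}=T^\circ(v)^{t_-}\,T^\bullet(v)^{t_-},
$$
where $T^\circ(v)^{t_-}$ denotes the symplectic transpose on the first factor, $T^\bullet(v)^{t_-}$ the symplectic transpose on the second factor, and the two commute.

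With these in hand the computation is a regrouping by tensor factor. Writing $\wt u=u-1/2$ and applying $\psi_3$,
$$
\psi_3(S(u))=T^\circ(\wt u)\,T^\bullet(\wt u)\,K_1\,T^\circ(-\wt u)^{t_-}\,T^\bullet(-\wt u)^{t_-}.
$$
As first-factor and second-factor operators commute, this regroups as $\big(T^\circ(\wt u)\,K\,T^\circ(-\wt u)^{t_-}\big)$ on the first factor times $\big(T^\bullet(\wt u)\,T^\bullet(-\wt u)^{t_-}\big)$ on the second. The second bracket is precisely the type-AII $S$-matrix $S^\bullet(\wt u)=T^\bullet(\wt u)\,T^\bullet(-\wt u)^{t_-}$ of $Y^-(2)$. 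For the first bracket one uses $K\,T^\circ(v)^{t_-}\,K=T^\circ(v)^{t_+}$ from \eqref{3ids}, that is $K\,T^\circ(v)^{t_-}=T^\circ(v)^{t_+}\,K$, to rewrite it as $T^\circ(\wt u)\,T^\circ(-\wt u)^{t_+}\,K=S^\circ(\wt u)\,K$, where $S^\circ(\wt u)=T^\circ(\wt u)\,T^\circ(-\wt u)^{t_+}$ is the type-AI $S$-matrix of $Y^+(2)$. Reassembling gives $\psi_3(S(u))=S^\circ(u-1/2)\,K_1\,S^\bullet(u-1/2)$, which is \eqref{DIII:embedding.1}.

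The main obstacle is the transpose factorization $t_+=t_-\ot t_-$ together with the attendant sign bookkeeping in the basis $\{v_{-2},v_{-1},v_1,v_2\}$, and then the observation that conjugation by $K_1$ — present exactly because $\mcG=K_1$ sits only on the first tensor factor — turns the symplectic transpose on that factor into an orthogonal one while leaving the second factor symplectic. This asymmetry is what produces the mixed target $Y^+(2)\ot Y^-(2)$ rather than $Y^-(2)\ot Y^-(2)$, mirroring the Lie-theoretic decomposition $(\mfso_4,\mfgl_2)\cong(\mfsl_2,\mfsp_2)\op(\mfsl_2,\mfgl_1)$ of \eqref{iso:3}.
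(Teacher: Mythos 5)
Your proposal is correct and follows essentially the same route as the paper: restrict $\psi_3$ to $X(\mfso_4,\mfgl_2)^{tw}$ (so injectivity is automatic), identify $\mcG=K_1$ on $V=\C^2\ot\C^2$, use $\psi_3(T(v)^{t_+})=T^\circ(v)^{t_-}T^\bullet(v)^{t_-}$ and $K\,T^\circ(v)^{t_-}K=T^\circ(v)^{t_+}$, and regroup by tensor factor to obtain $S^\circ(\wt u)\,K_1\,S^\bullet(\wt u)$. The only (harmless) divergence is that you justify the transpose identity conceptually via the factorization $t_+=t_-\ot t_-$ of the invariant form, whereas the paper simply verifies it from the explicit formulas in \cite[Proposition 4.8]{AMR}.
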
   

\begin{proof}
Since $X(\mfso_4,\mfgl_2)^{tw}$ is a subalgebra of $X(\mfso_4)$, it is already clear that if we set $\varphi_0=\psi_3|_{X(\mfso_4,\mfgl_2)^{tw}}$,
 then we get an embedding: $\varphi_0:X(\mfso_4,\mfgl_2)^{tw}\hookrightarrow Y(2)\otimes Y(2)$. Hence, we have to see that the image of this embedding is contained in  $Y^+(2)\otimes Y^-(2)$, where here $Y^+(2)$ and $Y^-(2)$ are both identified as subalgebras of $Y(2)$ via the embedding \eqref{S=TGT}, and in addition that it is given by the assignment \eqref{DIII:embedding.1}.
 
\smallskip 
 
Viewing $X(\mfso_4,\mfgl_2)^{tw}$ as a subalgebra of $X(\mfso_4)$, we have 
\begin{equation*}
 S(u)=T(u-1/2)\,\mcG\, T(-u+1/2)^{t_+} =T(\wt{u})\,\mcG\, T(-\wt{u})^{t_+}. 
\end{equation*}
It can be checked directly from the explicit expressions in \cite[Proposition 4.8]{AMR} that $\psi_3(T(u)^{t_+}) = T^{\circ}(u)^{t_-} T^{\bullet}(u)^{t_-}$. Moreover, $\mathcal{G}=K_1$ when $\mcG$ is viewed as an element of $\End\, \C^2 \otimes \End\, \C^2$ and we have that $K_1\, T^{\circ}(-\wt{u})^{t_-} K_1 = T^{\circ}(-\wt{u})^{t_+}$. Therefore, 
\begin{equation}
  \psi_3(S(u))=\psi_3(T(\wt{u}))\, K_1\, \psi_3(T(-\wt{u})^{t_+})=T^\circ(\wt{u})\,T^\bullet(\wt{u})\, K_1\,  T^{\circ}(-\wt{u})^{t_-}\,T^{\bullet}(-\wt{u})^{t_-} = S^\circ(\wt{u})\,K_1\, S^\bullet(\wt{u}). \label{DIII.embedding.2}
\end{equation}

This proves that $\Ima\,\varphi_0\subset Y^+(2)\otimes Y^-(2)$ and that $\varphi_0$ is given by the assignment \eqref{DIII:embedding.1}. 
\end{proof}

To compute the explicit images of each generating series $s_{ij}(u)$ under $\varphi_0$, we only have to use the formulas given in \cite[Proposition 4.8]{AMR} and replace all series $t_{ab}(u)$ by $s_{ab}^\circ(\wt{u})$ and all series $t^\bullet_{ab}(u)$ by $s_{ab}^\bullet(\wt{u})$, taking into account the sign changes coming from the appearance of $K_1$ in \eqref{DIII:embedding.1}.

Recall that the projection map $Y(2)\to SY(2)$ given by $t^\circ_{ij}(u)\mapsto \tau^\circ_{ij}(u)$ defines a surjective homomorphism of algebras. We denote 
 this map by $\mathrm{Pr}$. In \cite[Corollary 4.10]{AMR}, it was shown that the maps $\chi^{(1)}=\mathrm{Pr}_1\circ \psi_3$, and $\chi^{(2)}=\mathrm{Pr}_2\circ \psi_3$
 provide isomorphisms
$$
X(\mathfrak{so}_4)\cong SY(2)\otimes Y(2) \qu\text{and}\qu X(\mathfrak{so}_4)\cong  Y(2)\otimes SY(2), \qu\text{respectively.}
$$

\smallskip 
 
The restriction of the projection $\mathrm{Pr}$ to $Y^{\pm}(2)$ coincides with the natural projection of $Y^{\pm}(2)$ onto $SY^{\pm}(2)$. Thus, $\mathrm{Pr}(s^\circ_{ij}(u))$ is equal to $\si^\circ_{ij}(u)$ and the matrix $\mathrm{Pr}(S^\circ(u))$ is equal to $\Si^\circ(u)$, as in Definition \ref{D:Y(g,G):tw}.
We have the following corollary of Proposition \ref{DIII:Prop.embedding}:

\begin{crl}\label{DIII:Cor.isomorphism}
The compositions $\chi^{(1)}_0=\mathrm{Pr}_1\circ \varphi_0$ and $\chi^{(2)}_0=\mathrm{Pr}_2\circ \varphi_0$ define isomorphisms 
\begin{equation}
 X(\mathfrak{so}_4,\mathfrak{gl}_2)^{tw}\cong SY^+(2)\otimes Y^-(2) \quad \mathrm{ and } \quad X(\mathfrak{so}_4,\mathfrak{gl}_2)^{tw}\cong Y^+(2)\otimes SY^-(2), \text{ respectively.}
\end{equation}
\end{crl}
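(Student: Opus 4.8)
The plan is to prove that each of $\chi^{(1)}_0$ and $\chi^{(2)}_0$ is both injective and surjective; by the evident symmetry that interchanges the two tensor factors and the signs $\pm$, it suffices to treat $\chi^{(1)}_0$, and then repeat verbatim for $\chi^{(2)}_0$ with $\mathrm{Pr}_1$ replaced by $\mathrm{Pr}_2$ and $Y^-(2)$ projected instead of $Y^+(2)$. Injectivity is immediate and costs nothing: since $\varphi_0=\psi_3|_{X(\mfso_4,\mfgl_2)^{tw}}$, we have $\chi^{(1)}_0=\mathrm{Pr}_1\circ\varphi_0=\chi^{(1)}|_{X(\mfso_4,\mfgl_2)^{tw}}$, i.e.\ $\chi^{(1)}_0$ is the restriction to the subalgebra $X(\mfso_4,\mfgl_2)^{tw}$ of the isomorphism $\chi^{(1)}\colon X(\mfso_4)\iso SY(2)\ot Y(2)$ of \cite[Corollary 4.10]{AMR}, and a restriction of an injective map is injective. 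Thus the entire content is the identification of the image, namely $\Ima\,\chi^{(1)}_0 = SY^+(2)\ot Y^-(2)$.

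The next step is to record the explicit image. By Proposition \ref{DIII:Prop.embedding}, $\varphi_0(S(u))=S^\circ(\wt u)\,K_1\,S^\bullet(\wt u)\in Y^+(2)\ot Y^-(2)$, and since $\mathrm{Pr}_1$ acts as the natural projection $Y^+(2)\onto SY^+(2)$ on the first factor and as the identity on the second, we get
\[
\chi^{(1)}_0(S(u)) = \Si^\circ(\wt u)\,K_1\,S^\bullet(\wt u),
\]
whose coefficients lie in $SY^+(2)\ot Y^-(2)$. This proves $\Ima\,\chi^{(1)}_0\subseteq SY^+(2)\ot Y^-(2)$, so only surjectivity remains.

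For surjectivity I would imitate the concrete generator argument used in the proof of Proposition \ref{P:so3so3so2}. As a matrix on $V=\C^2\ot\C^2$, the right-hand side above is the ``tensor product'' $\big(\Si^\circ(\wt u)K\big)\ot S^\bullet(\wt u)$ of the two $2\times2$ blocks, so its matrix entries are (up to the signs coming from $K=\mathrm{diag}(\epsilon_{-1},\epsilon_1)$ and from $v_2=-e_1\ot e_1$) the products $\si^\circ_{ab}(\wt u)\,s^\bullet_{cd}(\wt u)$. Comparing coefficients of $u^{-r}$ and using that both $\Si^\circ$ and $S^\bullet$ have constant term $I$, each off-diagonal generator $\si^{\circ(r)}_{ab}$ ($a\neq b$) and $s^{\bullet(r)}_{cd}$ ($c\neq d$) occurs with coefficient $1$ plus a polynomial in generators of strictly lower order, so it is recovered by induction on $r$. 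The main obstacle is precisely the \emph{diagonal} generators: the pure-tensor form only exposes the sums $\si^{\circ(r)}_{aa}+s^{\bullet(r)}_{cc}$ modulo lower order, which reflects the inherent ambiguity $A\ot B=(cA)\ot(c^{-1}B)$ of a factorized matrix. I would break this ambiguity by invoking the defining constraint of the first factor: $SY^+(2)$ is the \emph{special} twisted Yangian, so $\mathrm{sdet}\,\Si^\circ=\ga_2$ is fixed, and at each order this expresses the trace combination $\si^{\circ(r)}_{-1,-1}+\si^{\circ(r)}_{11}$ as a polynomial in the already-recovered generators, pinning down each diagonal generator separately (and likewise for $S^\bullet$). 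Since $\{\si^{\circ(r)}_{ab}\}$ generate $SY^+(2)$, $\{s^{\bullet(r)}_{cd}\}$ generate $Y^-(2)$, and the two commuting families then generate the whole tensor product, this yields surjectivity.

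A cleaner alternative finish, which I would keep in reserve to avoid the diagonal bookkeeping, is a filtered-dimension argument. Equip both algebras with the standard filtration $\ddeg s^{(r)}_{ij}=r-1$; then $\chi^{(1)}_0$ is filtered, and because it is the restriction of the filtered isomorphism $\chi^{(1)}$ to a subalgebra carrying the induced filtration, $\gr\chi^{(1)}_0$ is injective. By the isomorphism of symmetric pairs \eqref{iso:3} together with the tensor decomposition \eqref{TY=TZ*TSY} (matching the central series $w(u)$ of $X(\mfso_4,\mfgl_2)^{tw}$ with the centre of $Y^-(2)$), the associated graded algebras are enveloping algebras of isomorphic twisted current Lie algebras and hence have equal graded dimensions in each degree; an injective filtered map between exhaustively filtered algebras of equal finite graded dimensions is an isomorphism. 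Either way, $\chi^{(1)}_0$ is an isomorphism onto $SY^+(2)\ot Y^-(2)$, and the symmetric argument gives $\chi^{(2)}_0\colon X(\mfso_4,\mfgl_2)^{tw}\iso Y^+(2)\ot SY^-(2)$.
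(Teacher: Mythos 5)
Your skeleton matches the paper's: injectivity is free because $\chi^{(1)}_0=\chi^{(1)}|_{X(\mfso_4,\mfgl_2)^{tw}}$, the containment $\Ima\,\chi^{(1)}_0\subseteq SY^+(2)\ot Y^-(2)$ comes from Proposition \ref{DIII:Prop.embedding}, and the entire content is surjectivity, which both you and the paper extract from the specialness of the first factor. But your main surjectivity mechanism has a parity gap. In $SY^+(2)$ the constraint from \eqref{Ols:sdet} reads $\si^\circ_{-1,-1}(u-1)\,\si^\circ_{-1,-1}(-u)-\si^\circ_{-1,1}(u-1)\,\si^\circ_{1,-1}(-u)=1$, and the part of its coefficient of $u^{-r}$ that is linear in order-$r$ generators is $\bigl(1+(-1)^r\bigr)\si^{\circ(r)}_{-1,-1}$. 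For odd $r$ this vanishes: at odd orders the Sklyanin determinant is merely a relation among already-recovered lower-order generators, and in particular it does not express the trace $\si^{\circ(r)}_{-1,-1}+\si^{\circ(r)}_{11}$ in lower-order terms by itself, contrary to what you claim. What closes the induction is the symmetry relation \eqref{Ols:symm}, which you never invoke: it gives $\si^{\circ(r)}_{11}=-\si^{\circ(r)}_{-1,-1}$ for odd $r$ (so the differences you recover from the matrix entries pin both diagonal generators down), and $\si^{\circ(r)}_{-1,-1}=\si^{\circ(r)}_{11}+\si^{\circ(r-1)}_{11}$ for even $r$ (where the determinant does do the work). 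Your reserve argument is not salvageable as stated: the graded pieces here are infinite-dimensional --- already the degree-zero piece of $\gr\,X(\mfso_4,\mfgl_2)^{tw}$ contains the enveloping algebra of the fixed-point Lie algebra $\mfgl_2$ --- so ``equal finite graded dimensions'' fails, and an injective filtered map between such algebras need not be surjective (consider $x_i\mapsto x_{i+1}$ on a polynomial ring in infinitely many variables).

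For comparison, the paper's execution sidesteps the order-by-order bookkeeping entirely. It applies $\chi^{(1)}_0$ to the single element $s_{11}(-\wt{u})\,s_{22}(\wt{u})-s_{1,-2}(-\wt{u})\,s_{-1,2}(\wt{u})$, whose image factors, by \eqref{Ols:sdet} and ${\rm sdet}\,\Si^\circ(u)=1$, as $s^\bullet_{-1,-1}(-u)\,s^\bullet_{11}(u-1)$ --- a series living purely in the second tensor factor. The symmetry relation \eqref{Ols:symm} rewrites this as $\bigl(s^\bullet_{11}(u)-\tfrac{1}{2u}(s^\bullet_{11}(u)-s^\bullet_{11}(-u))\bigr)s^\bullet_{11}(u-1)$, whence all coefficients of $s^\bullet_{11}(u)$ lie in the image by induction. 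Since $s^\bullet_{11}(u)$ is invertible and equals $\chi^{(1)}_0(\mathsf{z}(u))$ for an invertible series $\mathsf{z}(u)$, dividing the entries of $\chi^{(1)}_0(S(u+1/2))$ by $\mathsf{z}(u)$ recovers every $\si^\circ_{ij}(u)$, and a second division by a preimage of $\si^\circ_{11}(u)$ recovers every $s^\bullet_{ij}(u)$. If you repair your induction by adding the symmetry relation it will go through, but the series-division trick handles both parities at once and is the cleaner route.
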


\begin{proof}
We show that $\mathrm{Pr}_1\circ \varphi_0$  is an isomorphism. The argument for $\mathrm{Pr}_2\circ \varphi_0$ is similar. Note first that $\mathrm{Pr}_1\circ \varphi_0=\chi^{(1)}|_{X(\mathfrak{so}_4,\mathfrak{gl}_2)^{tw}}$. Therefore, since $\chi^{(1)}$ is an isomorphism, $\mathrm{Pr}_1\circ \varphi_0$ is injective. Moreover, it is clear that the image of $\mathrm{Pr}_1\circ \varphi_0$ is contained in $SY^+(2)\otimes Y^-(2)$.

To show surjectivity, we use an argument similar to that used in the proof of Corollary 4.10 in \cite{AMR},
where the role of the quantum determinant ${\rm qdet}\, T^\circ (u)$ is played by the Sklyanin determinant ${\rm sdet}\,S^\circ(u)$. We first show that the coefficients of the series $s^\bullet_{11}(u)$ belong to the image of $\chi^{(1)}_0$. By \eqref{DIII:embedding.1}, the definition of $\chi^{(1)}_0$  and  the relation \eqref{Ols:sdet}, we have
\begin{align*}
 \chi^{(1)}_0&\left(s_{11}(-\wt{u})\,s_{22}(\wt{u})-s_{1,-2}(-\wt{u})\,s_{-1,2}(\wt{u}) \right)\\
                   &=\si^\circ_{11}(-u)\,s^\bullet_{-1,-1}(-u)\,\si^\circ_{11}(u-1)\,s^\bullet_{11}(u-1)-\si^\circ_{1,-1}(-u)\,s^\bullet_{-1,-1}(-u)\,\si^\circ_{-1,1}(u-1)\,s^\bullet_{11}(u-1)\\
                   &=\left(\si^\circ_{11}(-u)\,\si^\circ_{11}(u-1)-\si^\circ_{1,-1}(-u)\,\si^\circ_{-1,1}(u-1)\right)s^\bullet_{-1,-1}(-u)\,s^\bullet_{11}(u-1)\\
                   &=s^\bullet_{-1,-1}(-u)\,s^\bullet_{11}(u-1),
\end{align*}
where the last equality follows from the fact that ${\rm sdet}\,\Si^\circ(u)=1$. By the symmetry relation \eqref{Ols:symm} we have $s^\bullet_{-1,-1}(-u)=s^\bullet_{11}(u)-\frac{1}{2}(s^\bullet_{11}(u)-s^\bullet_{11}(-u))\,u^{-1}$. Thus the coefficients of the series $s^\bullet_{-1,-1}(-u)\,s^\bullet_{11}(u-1)$, which equals
\begin{equation}
\left(s^\bullet_{11}(u)-\tfrac{1}{2u}\left(s^\bullet_{11}(u)-s^\bullet_{11}(-u)\right) \right)s^\bullet_{11}(u-1), \label{DIII:isomorphism.1}
\end{equation}
belongs to $\Ima\, \chi_0^{(1)}$. This implies that all the coefficients of $s^\bullet_{11}(u)$ belong to $\Ima\,\chi_0^{(1)}$. To see this, expand the series \eqref{DIII:isomorphism.1} in $Y^-(2)[[u^{-1}]]$ and argue by induction.

Next, since $s^{\bullet(0)}_{11}=1$ the series $s^\bullet_{11}(u)$ has an inverse in $Y^-(2)[[u^{-1}]]$.  In particular, since there is a series $\mathsf{z}(u)\in X(\mfso_4,\mfgl_2)^{tw}[[u^{-1}]]$ satisfying $\chi^{(1)}_0(\mathsf{z}(u))=s^\bullet_{11}(u)$, writing 
$
\mathsf{z}(u)=\sum_{r\ge 0} \mathsf{z}^{(r)}u^{-r},
$
we must also have $\mathsf{z}^{(0)}=1$. Hence, $\mathsf{z}(u)$ is also invertible with inverse $\mathsf{z}^{-1}(u)$ which satisfies $\chi^{(1)}_0\left(\mathsf{z}^{-1}(u)\right)=s^\bullet_{11}(u)^{-1}$. 
This implies that $\si^\circ_{ij}(u)\in \Ima\,\chi^{(1)}_0[[u^{-1}]]$ for all $i,j\in \{-1,1\}$. Indeed, 
the formula \eqref{DIII:embedding.1} and the definition of $\chi_0^{(1)}$ imply the following equalities:
\spl{
\si^\circ_{-1,-1}(u)&= \chi^{(1)}_0\left(-s_{-1,-1}(u+1/2)\,\mathsf{z}(u)^{-1}\right), \qq  \si^\circ_{11}(u)= \chi^{(1)}_0\left(s_{22}(u+1/2)\,\mathsf{z}(u)^{-1} \right) \\
\si^\circ_{-1,1}(u)&= \chi^{(1)}_0\left(-s_{-1,2}(u+1/2)\,\mathsf{z}(u)^{-1}\right), \qq \si^\circ_{1,-1}(u)=\chi^{(1)}_0\left(s_{2,-1}(u+1/2)\,\mathsf{z}(u)^{-1}\right).
}
Similarly, since $\si^{\circ(0)}_{11}=1$, the series $\si^\circ_{11}(u)$ is invertible. As there exists a series $\mathsf{y}(u)$ in $X(\mfso_4,\mfgl_2)^{tw}[[u^{-1}]]$ satisfying $\chi^{(1)}_0(\mathsf{y}(u))=\si^\circ_{11}(u)$, using an argument similar to that just used with $\mathsf{z}(u)$, we obtain $s^\bullet_{ij}(u)\in \Ima\,\chi^{(1)}_0$ for all $i,j\in \{\pm1\}$ (this has already been established for the case $i=j=1$). This completes the proof that $\chi^{(1)}_0$ is surjective, and thus that $\chi^{(1)}_0$ is an isomorphism. 
\end{proof}

\begin{crl} \label{DIII:Cor.Yangian}
The isomorphism $\chi_0^{(0)}$ descends to the quotients $Y(\mfso_4,\mfgl_2)^{tw}$ and $ SY^+(2)\ot SY^-(2)$, thus inducing an isomorphism 
\begin{equation}
Y(\mfso_4,\mfgl_2)^{tw}\cong  SY^+(2)\otimes SY^-(2). \label{DIII:Yangian}
\end{equation}
\end{crl}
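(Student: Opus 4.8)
The plan is to realise both algebras in \eqref{DIII:Yangian} as quotients and to show that the isomorphism $\chi^{(1)}_0=\mathrm{Pr}_1\circ\varphi_0$ of Corollary \ref{DIII:Cor.isomorphism} carries one defining ideal onto the other. Recall from \eqref{Suwu} that $Y(\mfso_4,\mfgl_2)^{tw}$ is the quotient of $X(\mfso_4,\mfgl_2)^{tw}$ by the ideal generated by the non-constant coefficients of the unitarity series $w(u)$, fixed by $S(u)\,S(-u)=w(u)\cdot I$, and from \eqref{quotAIAII} that $SY^-(2)$ is the quotient of $Y^-(2)$ by the coefficients of ${\rm sdet}\,S(u)-\ga_2(u)$, with $\ga_2(u)=\tfrac{2u+1}{2u-1}$, whereas ${\rm sdet}\,\Si^\circ(u)=1$ in $SY^+(2)$. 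Hence $SY^+(2)\ot SY^-(2)$ is the quotient of $SY^+(2)\ot Y^-(2)$ by the kernel of $\id\ot\mathrm{Pr}$, namely the ideal generated in the second tensor factor by the coefficients of ${\rm sdet}\,S^\bullet(u)-\ga_2(u)$. It thus suffices to prove that $\chi^{(1)}_0$ sends the first ideal onto the second.

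The key step is to compute $\chi^{(1)}_0(w(u))$. Applying $\chi^{(1)}_0$ to $S(u)\,S(-u)=w(u)\cdot I$ and using $\chi^{(1)}_0(S(u))=\Si^\circ(u-1/2)\,K_1\,S^\bullet(u-1/2)$ coming from \eqref{DIII:embedding.1}, I would note that $\Si^\circ$ and $K_1$ act on the first copy of $\C^2$ while $S^\bullet$ acts on the second, with coefficients in the commuting factors $SY^+(2)$ and $Y^-(2)$, so the product factorises as
\eqn{
\chi^{(1)}_0\big(S(u)\,S(-u)\big) = \big[\Si^\circ(u-1/2)\,K\,\Si^\circ(-u-1/2)\,K\big]_{(1)}\cdot\big[S^\bullet(u-1/2)\,S^\bullet(-u-1/2)\big]_{(2)} .
}
Because the left-hand side equals $\chi^{(1)}_0(w(u))\cdot I\ot I$, both bracketed matrices must be scalar, and I can read off $\chi^{(1)}_0(w(u))$ from a single entry. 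Taking the $(-1,-1;-1,-1)$ entry and invoking the Sklyanin determinant formulas \eqref{Ols:sdet}, the first bracket contributes ${\rm sdet}\,\Si^\circ(u+1/2)=1$ (upper signs, using ${\rm sdet}\,\Si^\circ=1$ together with the sign twist produced by the two factors of $K$), while the second contributes $\tfrac{u}{u+1}\,{\rm sdet}\,S^\bullet(u+1/2)$ (lower signs, the prefactor $\tfrac{u}{u+1}$ being the reciprocal of $\tfrac{2w+1}{2w-1}$ at $w=u+1/2$). This gives $\chi^{(1)}_0(w(u))=\tfrac{u}{u+1}\,{\rm sdet}\,S^\bullet(u+1/2)$.

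Since $\ga_2(u+1/2)=\tfrac{u+1}{u}$, the relation $w(u)=1$ is therefore equivalent under $\chi^{(1)}_0$ to ${\rm sdet}\,S^\bullet(u+1/2)=\ga_2(u+1/2)$, that is to ${\rm sdet}\,S^\bullet(u)=\ga_2(u)$. Thus $\chi^{(1)}_0$ maps the ideal $(w(u)-1)$ onto the kernel of $\id\ot\mathrm{Pr}$, and so descends to an isomorphism $Y(\mfso_4,\mfgl_2)^{tw}\cong SY^+(2)\ot SY^-(2)$. Equivalently, the double projection $\chi^{(0)}_0=\mathrm{Pr}_2\circ\chi^{(1)}_0=\mathrm{Pr}_1\circ\chi^{(2)}_0$ satisfies $\chi^{(0)}_0(w(u))=\tfrac{u}{u+1}\,\ga_2(u+1/2)=1$, so it annihilates $(w(u)-1)$ and factors through $Y(\mfso_4,\mfgl_2)^{tw}$; surjectivity is clear from the construction and injectivity follows from $\chi^{(1)}_0$ being an isomorphism. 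The same computation with the roles of the two factors exchanged handles $\chi^{(2)}_0$.

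I expect the computation of $\chi^{(1)}_0(w(u))$ to be the main obstacle, and within it the delicate point is the bookkeeping of spectral shifts and of the prefactors in \eqref{Ols:sdet}, so that the symplectic factor $Y^-(2)$ produces $\ga_2$ rather than $1$. This is essential: a purely abstract argument — that $\chi^{(1)}_0$ must carry the centre $Z(\mfso_4,\mfgl_2)^{tw}$ isomorphically onto $Z^-(2)$, since $SY^+(2)$ has trivial centre — does not by itself identify the quotients, because $SY^-(2)$ is cut out by ${\rm sdet}\,S=\ga_2$ with $\ga_2\neq1$, not by the augmentation ideal of $Z^-(2)$. It is precisely the factor $\tfrac{u}{u+1}$ emerging from the entrywise computation that accounts for the shift from $1$ to $\ga_2$ and makes the two defining ideals match.
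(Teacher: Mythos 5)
Your proposal is correct and follows essentially the same route as the paper's proof: both compute $\chi^{(1)}_0(w(u))$ by factoring $\chi^{(1)}_0(S(u)S(-u))$ into the two commuting tensor factors, identify a diagonal entry of each factor with a Sklyanin determinant via \eqref{Ols:sdet} (getting $1$ from the $SY^+(2)$ factor and the $\gamma_2$-twisted ${\rm sdet}\,S^\bullet$ from the $Y^-(2)$ factor), and conclude that the ideal of non-constant coefficients of $w(u)$ is carried onto $SY^+(2)\otimes({\rm sdet}\,S^\bullet(u)-\gamma_2(u))$, inducing the isomorphism on quotients. The only cosmetic difference is that you read off the $(-1,-1)$ entry (argument $u+1/2$) while the paper records both diagonal entries and states the result at $-\wt{u}=-u+1/2$; these agree since the factor is scalar.
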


\begin{proof}
Recall that in $X(\mfg,\mcG)^{tw}$ we have the relation $S(u)S(-u)=w(u)\cdot I$, where $w(u)$ is an even formal power series whose coefficients generate the center of $X(\mfg,\mcG)^{tw}$. In particular, this implies that
$$
\chi^{(1)}_0(S(u)\,S(-u))=\chi^{(1)}_0(w(u))\cdot I \otimes I,
$$ 
where $\chi^{(1)}_0(w(u))$ is a central series in $SY^+(2)\otimes Y^-(2)$.  Conversely, the formulas of Proposition \ref{DIII:Prop.embedding} give:
\begin{align*}
\chi^{(1)}_0(S(u)\,S(-u))&=\Si^\circ(\wt{u})\,K_1\,S^\bullet(\wt{u})\,\Si^\circ(-\wt{u}-1)\,K_1\,S^\bullet(-\wt{u}-1)\\
                         &=\left(\Si^\circ(\wt{u})\,K_1\, \Si^\circ(-\wt{u}-1)\,K_1\right)\left(S^\bullet(\wt{u})\,S^\bullet(-\wt{u}-1) \right).
\end{align*}
Therefore both $\Si^\circ(\wt{u})\,K_1\, \Si^\circ(-\wt{u}-1)K_1$ and $S^\bullet(\wt{u})\,S^\bullet(-\wt{u}-1)$ are multiples of the identity $I$. However, one easily checks that, for $i=\pm1$, the coefficient of $E_{ii}$ in $\Si^\circ(\wt{u})\,K_1\, \Si^\circ(-\wt{u}-1)\,K_1$ is given by 
$$
\mathrm{sgn}(i)\,\si^\circ_{i1}(\wt{u})\,\si^\circ_{1i}(-\wt{u}-1)-\mathrm{sgn}(i)\,\si^\circ_{i,-1}(\wt{u})\,\si^\circ_{-1,i}(-\wt{u}-1)={\rm sdet}\, \Si^\circ(\alpha_i(u))=1,
$$
where ${\alpha_i(u)}=\delta_{i,-1}-\mathrm{sgn}(i)\,\wt{u}$. Here we have used the formulas in \eqref{Ols:sdet} for the Sklyanin determinant, as well as the fact that $SY^+(2)$ is equal to the quotient $Y^+(2)/({\rm sdet}\,S^\circ(u)-1)$. 

Similarly, an easy computation using relation \eqref{Ols:sdet} shows that the diagonal entries of $S^\bullet(\wt{u})S^\bullet(-\wt{u}-1)$ are equal to 
$$
s^\bullet_{i1}(\wt{u})\,s^\bullet_{1i}(-\wt{u}-1)+s^\bullet_{i,-1}(\wt{u})\,s^\bullet_{-1,i}(-\wt{u}-1)=\frac{2{\alpha_i(u)}-1}{2{\alpha_i(u)}+1}\, {\rm sdet}\,S^\bullet({\alpha_i(u)}).
$$
However, by \eqref{Ols:qdet->sdet} the right hand side of the above equation is equal to $\gamma_2(-\wt{u})^{-1}{\rm sdet}\,S^\bullet(-\wt{u}) $, where $\gamma_2(u)=(2u+1)(2u-1)^{-1}$, as in \eqref{gamma(u)}). Therefore we can write 
\begin{equation*}
\chi^{(1)}_0(w(u))=1\otimes \gamma_2(-\wt{u})^{-1}\,{\rm sdet}\,S^\bullet(-\wt{u}),
\end{equation*}
and so $\chi^{(1)}_0$ sends the ideal generated by the nonconstant coefficients of $w(u)$ to the ideal $SY^+(2)\otimes J$, where $J$ is the ideal in $Y^-(2)$ generated by the non-constant coefficients of $\gamma_2(-\wt{u})^{-1}\,{\rm sdet}\,S^\bullet(u)$. 
By \eqref{quotAIAII}, $SY^-(S)\cong Y^-(2)/J$, and as consequence we have the following sequence of induced isomorphisms: 
\begin{equation*}
 Y(\mfso_4,\mfgl_2)^{tw}\cong (SY^+(2)\otimes Y^-(2))/(SY^+(2)\otimes J)\cong SY^+(2)\otimes \left( Y^-(2)/J\right)\cong SY^+(2)\otimes SY^-(2).
\end{equation*}
This establishes the isomorphism \eqref{DIII:Yangian}, completing the proof of the corollary.

We can also obtain the isomorphism $Y(\mfso_4,\mfgl_2)^{tw}\cong  SY^+(2)\otimes SY^-(2)$ by viewing $Y(\mfso_4,\mfgl_2)^{tw}$ as a subalgebra of $X(\mfso_4,\mfgl_2)^{tw}$ and restricting the
isomorphism $\chi^{(1)}_0$. Indeed, recall that  $Y(\mfso_4,\mfgl_2)^{tw}$ can be identified as the subalgebra of $X(\mfso_4,\mfgl_2)^{tw}$ stable under all the automorphisms $\nu_g$ of the form
$S(u)\mapsto g(u-1/2)S(u)$, where $g(u)$ is an even formal power series in $1+u^{-2}\C[[u^{-2}]]$. 
Similarly $SY^-(2)$ is isomorphic to the subalgebra of $Y^-(2)$ stable under all automorphisms $\nu^\bullet_h$ of the form 
$S^\bullet(u)\mapsto h(u)S^\bullet(u)$, where again $h(u)$ belongs to $1+u^{-2}\C[[u^{-2}]]$. Therefore, $SY^+(2)\otimes SY^-(2)$ is isomorphic to the subalgebra 
of $SY^+(2)\otimes Y^-(2)$ stable under all automorphisms $1\otimes \nu^\bullet_h$. Moreover, given a series $g(u)\in 1+u^{-2}\C[[u^{-2}]]$, it follows immediately from the definition of $\chi^{(1)}_0$ that 
$(1\otimes \nu^\bullet_g)\circ \chi^{(1)}_0=\chi^{(1)}_0\circ\nu_g$. This implies that the image of $Y(\mfso_4,\mfgl_2)^{tw}$ under the isomorphism $\chi^{(1)}_0$ is exactly the subalgebra $SY^+(2)\otimes SY^-(2)$
of $SY^+(2)\otimes Y^-(2)$. 
\end{proof}

Note that the previous corollary can be proven in the same way with $\chi^{(1)}_0$ replaced by $\chi^{(2)}_0$. This will also be true for the corresponding corollaries \ref{D0:Cor.Yangian} and \ref{D1:Cor.Yangian}  in the following two subsections. 
 
\begin{crl}\label{Yso4gl2B}
The isomorphisms obtained in Corollary \ref{DIII:Cor.isomorphism} and Corollary \ref{DIII:Cor.Yangian} can be written in terms of the reflection algebras $\mathcal{B}(2,q)$ and $\mathcal{SB}(2,q)$ studied
in \cite{MR}. Specifically, we obtain isomorphisms
\begin{equation*}
 X(\mfso_4,\mfgl_2)^{tw}\cong \mathcal{SB}(2,1)\otimes \mcB(2,0) \quad \mathrm{ and } \quad  X(\mfso_4,\mfgl_2)^{tw}\cong \mcB(2,1)\otimes \mathcal{SB}(2,0),
\end{equation*}
while
\begin{equation*}
 Y(\mfso_4,\mfgl_2)^{tw}\cong \mathcal{SB}(2,1)\otimes \mathcal{SB}(2,0).
\end{equation*}
\end{crl}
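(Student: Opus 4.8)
The plan is to obtain each of the three stated isomorphisms by composing the isomorphisms already established in Corollaries \ref{DIII:Cor.isomorphism} and \ref{DIII:Cor.Yangian} with the identifications of $Y^\pm(2)$ and $SY^\pm(2)$ as reflection algebras recorded in Section \ref{Sec:4.2}. Concretely, I would first recall from that section the four isomorphisms
\[
Y^+(2)\cong\mcB(2,1),\quad Y^-(2)\cong\mcB(2,0),\quad SY^+(2)\cong\mcS\mcB(2,1),\quad SY^-(2)\cong\mcS\mcB(2,0),
\]
the first pair being the extension to $Y^\pm(2)$ of the maps $\phi_0,\phi_1$ of \eqref{Y->B:1}--\eqref{Y->B:2} via the decomposition \eqref{TY=TZ*TSY}, and the second pair being the isomorphisms \eqref{SYSB0}--\eqref{SYSB1} restricted to the special subalgebras. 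Crucially, the bookkeeping here is that the orthogonal twisted Yangian $Y^+(2)$ corresponds to $q=1$ and the symplectic one $Y^-(2)$ to $q=0$.

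Then I would tensor. Since an isomorphism of algebras $A\cong A'$ induces an isomorphism $A\otimes B\cong A'\otimes B$ for any $B$ (and symmetrically on the second factor), applying $SY^+(2)\cong\mcS\mcB(2,1)$ and $Y^-(2)\cong\mcB(2,0)$ to the two factors of the first isomorphism $X(\mfso_4,\mfgl_2)^{tw}\cong SY^+(2)\otimes Y^-(2)$ of Corollary \ref{DIII:Cor.isomorphism} yields $X(\mfso_4,\mfgl_2)^{tw}\cong\mcS\mcB(2,1)\otimes\mcB(2,0)$. Applying $Y^+(2)\cong\mcB(2,1)$ and $SY^-(2)\cong\mcS\mcB(2,0)$ to the second isomorphism $X(\mfso_4,\mfgl_2)^{tw}\cong Y^+(2)\otimes SY^-(2)$ gives $X(\mfso_4,\mfgl_2)^{tw}\cong\mcB(2,1)\otimes\mcS\mcB(2,0)$. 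Finally, applying $SY^+(2)\cong\mcS\mcB(2,1)$ and $SY^-(2)\cong\mcS\mcB(2,0)$ to the isomorphism $Y(\mfso_4,\mfgl_2)^{tw}\cong SY^+(2)\otimes SY^-(2)$ of Corollary \ref{DIII:Cor.Yangian} produces $Y(\mfso_4,\mfgl_2)^{tw}\cong\mcS\mcB(2,1)\otimes\mcS\mcB(2,0)$.

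Since every map involved is already known to be an isomorphism and the tensor-product construction preserves isomorphisms, there is essentially no structural obstacle: the content of the corollary is purely a restatement in a different language, not a new result. The only point requiring care---what I would call the ``hard part,'' though it is really just attention to detail---is ensuring the consistency of the $q$-assignment across the two sources, namely that the orthogonal/symplectic labels $\pm$ are matched to the correct values $q=1,0$ as in the maps $\phi_0,\phi_1$ and in \eqref{SYSB0}--\eqref{SYSB1}, so that the three resulting statements carry the correct indices.
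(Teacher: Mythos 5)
Your proposal is correct and is essentially the paper's own proof: the authors likewise deduce the corollary by applying, factor by factor, the identifications $SY^\pm(2)\cong\mcS\mcB(2,q)$ from \eqref{SYSB0}--\eqref{SYSB1} and their extensions $Y^\pm(2)\cong\mcB(2,q)$ (obtained via the decomposition \eqref{TY=TZ*TSY}) to the tensor-product isomorphisms of Corollaries \ref{DIII:Cor.isomorphism} and \ref{DIII:Cor.Yangian}, with the same $q$-bookkeeping ($Y^+(2)\leftrightarrow q=1$, $Y^-(2)\leftrightarrow q=0$). The only slight imprecision is attributing the isomorphisms $Y^\pm(2)\cong\mcB(2,q)$ to an ``extension'' of $\phi_0,\phi_1$ (which live on the extended algebras $\wt Y^\pm(2)$, $\wt\mcB(2,q)$), whereas the paper obtains them by extending $SY^\pm(2)\cong\mcS\mcB(2,q)$ through \eqref{TY=TZ*TSY}; this does not affect the validity of the argument.
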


\begin{proof} 
This is a corollary of the isomorphisms $SY^\pm(2)\cong \mathcal{SB}(2,q)$ which are given explicitly in \eqref{SYSB0} and \eqref{SYSB1}, together with the fact that, due to \eqref{TY=TZ*TSY}, these isomorphisms 
can be extended to obtain isomorphisms $Y^\pm(2)\cong\mcB(2,q)$.
\end{proof}


\subsection{Twisted Yangians for the symmetric pair \texorpdfstring{$(\mf{so}_4,\mf{so}_4)$}{X(so4,so4)}}

We will establish results analogous to those obtained for $X(\mfso_4,\mfgl_2)^{tw}$, this time 
replacing the tensor product $Y^+(2)\otimes Y^-(2)$ with $Y^-(2)\otimes Y^-(2)$. We keep all the same notation, with the slight modification that the notation $s^\circ_{ij}(u)$ will be
reserved for the series in the $(i,j)^{th}$ entry of the $S$-matrix $S^\circ(u)$ corresponding to the first copy of $Y^-(2)$, as opposed to $Y^+(2)$.

\begin{prop}\label{D0:Prop.embedding}
The assignment
\begin{equation}
S(u)\mapsto S^\circ(u-1/2)\,S^\bullet(u-1/2) \label{D0:embedding.1}
\end{equation}
defines an embedding $\varphi_1:X(\mfso_4,\mfso_4)^{tw}\hookrightarrow Y^-(2)\otimes Y^-(2)$. 
\end{prop}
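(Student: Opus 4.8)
The plan is to follow the strategy of Proposition \ref{DIII:Prop.embedding} almost verbatim, the only structural change being that the symmetric pair $(\mfso_4,\mfso_4)$ is the trivial one of type D0, so that $\mcG=I_4$ rather than $\mcG=K_1$. Since $X(\mfso_4,\mfso_4)^{tw}$ is by definition a subalgebra of $X(\mfso_4)$, the embedding $\psi_3$ of \eqref{X->Y:3} restricts to an embedding $\varphi_1=\psi_3|_{X(\mfso_4,\mfso_4)^{tw}}:X(\mfso_4,\mfso_4)^{tw}\into Y(2)\ot Y(2)$. It therefore suffices to compute $\psi_3(S(u))$, to check that it lands in $Y^-(2)\ot Y^-(2)$, and to verify that it agrees with the assignment \eqref{D0:embedding.1}.

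For the computation I would first record that, for $\mfso_4$, one has $\ka=N/2-1=1$, so $\wt u=u-1/2$, and that $K(u)=\mcG=I$ for the trivial pair. Hence, viewing $X(\mfso_4,\mfso_4)^{tw}$ inside $X(\mfso_4)$, its $S$-matrix is simply $S(u)=T(\wt u)\,T(-\wt u)^{t_+}$, with no intervening $\mcG$. Applying $\psi_3$ and using the identity $\psi_3(T(u)^{t_+})=T^\circ(u)^{t_-}\,T^\bullet(u)^{t_-}$ (established from \cite[Proposition 4.8]{AMR} and already invoked in the previous subsection) gives
\[
\psi_3(S(u))=T^\circ(\wt u)\,T^\bullet(\wt u)\,T^\circ(-\wt u)^{t_-}\,T^\bullet(-\wt u)^{t_-}.
\]
The two copies of $Y(2)$ act on different tensor legs of $V=\C^2\ot\C^2$ and have entries in commuting subalgebras, so $T^\bullet(\wt u)$ commutes with $T^\circ(-\wt u)^{t_-}$; moving it to the right yields $\psi_3(S(u))=S^\circ(\wt u)\,S^\bullet(\wt u)$, where $S^\circ(\wt u)=T^\circ(\wt u)\,T^\circ(-\wt u)^{t_-}$ and $S^\bullet(\wt u)=T^\bullet(\wt u)\,T^\bullet(-\wt u)^{t_-}$ are each the $S$-matrix of the type-AII twisted Yangian $Y^-(2)$ in the respective copy.

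This already proves the proposition: $S^\circ(\wt u)$ has coefficients in the first copy of $Y^-(2)$ and $S^\bullet(\wt u)$ in the second, so $\Ima\,\varphi_1\subset Y^-(2)\ot Y^-(2)$, the map is given by \eqref{D0:embedding.1}, and it is injective because $\psi_3$ is. There is no serious obstacle here beyond bookkeeping; the one conceptual point worth highlighting is that the absence of $\mcG$ (equivalently, of the conjugating factor $K_1$ that appeared in the $(\mfso_4,\mfgl_2)$ case) means the orthogonal transpose $t_+$ is converted by $\psi_3$ into the symplectic transpose $t_-$ on both legs, so both factors become $Y^-(2)$ rather than one being $Y^+(2)$. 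This is exactly what the symmetric-pair decomposition predicts, and it is the only place where care with the transpose conventions is genuinely needed.
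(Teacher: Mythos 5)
Your proof is correct and follows essentially the same route as the paper's: restrict $\psi_3$ to $X(\mfso_4,\mfso_4)^{tw}$, use $\mcG=I$ together with $\psi_3(T(u)^{t_+})=T^\circ(u)^{t_-}\,T^\bullet(u)^{t_-}$ to compute $\psi_3(S(u))=S^\circ(\wt{u})\,S^\bullet(\wt{u})$, and conclude injectivity from that of $\psi_3$. The only difference is cosmetic: the commutation of $T^\bullet(\wt{u})$ with $T^\circ(-\wt{u})^{t_-}$, which you spell out, is left implicit in the paper's final chain of equalities.
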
   

\begin{proof}
The argument is very similar to that used in the proof of Proposition \ref{DIII:Prop.embedding}. We set $\varphi_1=\psi_3|_{X(\mfso_4,\mfso_4)^{tw}}$, an embedding, and show that $\varphi_1$ is actually given by the assignment \eqref{D0:embedding.1}. Viewing $X(\mfso_4,\mfso_4)^{tw}$ as a subalgebra of the extended Yangian $X(\mfso_4)$, we have 
$$
S(u)=T(u-1/2)\,T(-u+1/2)^{t_+} = T(\wt{u})\,T(-\wt{u})^{t_+}.
$$
Therefore, applying $\psi_3$ to $S(u)$ we obtain: 
\begin{equation}
\psi_3(S(u))=\psi_3(T(\wt{u}))\,\psi_3(T(-\wt{u})^{t_+}) = T^\circ(\wt{u})\,T^\bullet(\wt{u})\, T^{\circ}(-\wt{u})^{t_-}T^{\bullet}(-\wt{u})^{t_-} = S^\circ(\wt{u})\,S^\bullet(\wt{u}). \label{D0:embedding.2}
\end{equation}
\end{proof}

The image of all series $t_{ij}(u)$ under $\psi_3$ are given in \cite[Proposition 4.8]{AMR}. To obtain the image of $s_{ij}(u)$ under $\varphi_1$, one need only observe where $\psi_3$ maps $t_{ij}(u)$ and then replace all series $t^\circ_{ab}(u)$ by $s^\circ_{ab}(\wt{u})$, and all series $t^\bullet_{ab}(u)$ by $s^\bullet_{ab}(\wt{u})$.

The following corollary gives isomorphisms for $X(\mfso_4,\mfso_4)^{tw}$ analogous to those obtain for $X(\mfso_4,\mfgl_2)^{tw}$ in Proposition \ref{DIII:Cor.isomorphism}.

\begin{crl}\label{D0:Cor.isomorphism}
The compositions $\chi^{(1)}_1=\mathrm{Pr}_1\circ \varphi_1$ and $\chi^{(2)}_1=\mathrm{Pr}_2\circ \varphi_1$ define isomorphisms 
\begin{equation}
X(\mfso_4,\mfso_4)^{tw}\cong SY^-(2)\otimes Y^-(2) \quad \mathrm{ and } \quad X(\mfso_4,\mfso_4)^{tw}\cong Y^-(2)\otimes SY^-(2),
\end{equation}
respectively. 
\end{crl}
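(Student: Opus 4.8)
The plan is to adapt, almost verbatim, the argument used for Corollary \ref{DIII:Cor.isomorphism}. First I would record that, by construction, $\chi^{(1)}_1 = \chi^{(1)}|_{X(\mfso_4,\mfso_4)^{tw}}$, where $\chi^{(1)} = \mathrm{Pr}_1 \circ \psi_3 : X(\mfso_4) \iso SY(2) \ot Y(2)$ is the isomorphism of \cite[Corollary 4.10]{AMR}. Since $\varphi_1$ is an embedding (Proposition \ref{D0:Prop.embedding}) and $\chi^{(1)}$ is injective, $\chi^{(1)}_1$ is injective; and since $\mathrm{Pr}$ restricts to the projection $Y^-(2) \onto SY^-(2)$ on the first factor, \eqref{D0:embedding.1} shows $\Ima\,\chi^{(1)}_1 \subseteq SY^-(2) \ot Y^-(2)$. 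Everything therefore reduces to proving surjectivity.

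For surjectivity I would mirror the three stages of the proof of Corollary \ref{DIII:Cor.isomorphism}, the only structural simplification being that the factor $K_1$ is now absent from \eqref{D0:embedding.1}, and the only structural change being that the first tensor factor is $SY^-(2)$ rather than $SY^+(2)$. Concretely, in $SY^-(2)$ the lower-sign form of \eqref{Ols:sdet} together with ${\rm sdet}\,\Si^\circ(u) = \tfrac{2u+1}{2u-1}$ (which holds by \eqref{quotAIAII} and \eqref{gamma(u)}) gives the normalized identity
$$
\si^\circ_{11}(-u)\,\si^\circ_{11}(u-1) + \si^\circ_{1,-1}(-u)\,\si^\circ_{-1,1}(u-1) = 1.
$$
Applying $\chi^{(1)}_1$ to the appropriate bilinear combination of the $s_{ij}(u)$ (the $\mfso_4$ analogue of $s_{11}(-\wt{u})\,s_{22}(\wt{u}) - s_{1,-2}(-\wt{u})\,s_{-1,2}(\wt{u})$, read off from \cite[Proposition 4.8]{AMR} and \eqref{D0:embedding.1}), this identity collapses the first-factor part to $1$ and leaves a product of two $s^\bullet_{11}$-series. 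The symmetry relation \eqref{Ols:symm} for $Y^-(2)$ then expresses $s^\bullet_{-1,-1}(-u)$ through $s^\bullet_{11}(\pm u)$, and a coefficientwise induction in $u^{-1}$ extracts all coefficients of $s^\bullet_{11}(u)$, placing them in $\Ima\,\chi^{(1)}_1$.

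The remaining two stages go through unchanged. Since $s^{\bullet(0)}_{11} = 1$, there is a series $\mathsf{z}(u) \in X(\mfso_4,\mfso_4)^{tw}[[u^{-1}]]$ with $\chi^{(1)}_1(\mathsf{z}(u)) = s^\bullet_{11}(u)$ and constant term $1$, hence invertible; the analogues of the four explicit formulas expressing $\si^\circ_{ij}(u)$ through the $s_{ij}(u)$ and $\mathsf{z}(u)^{-1}$ then show $\si^\circ_{ij}(u) \in \Ima\,\chi^{(1)}_1[[u^{-1}]]$ for all $i,j$. Finally, invertibility of $\si^\circ_{11}(u)$ (as $\si^{\circ(0)}_{11} = 1$) and the symmetric version of the same argument yield all $s^\bullet_{ij}(u) \in \Ima\,\chi^{(1)}_1$, giving surjectivity and hence the first isomorphism. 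The second isomorphism, via $\chi^{(2)}_1 = \mathrm{Pr}_2 \circ \varphi_1$, follows by interchanging the two copies of $Y^-(2)$. The one point demanding care---and the only real obstacle---is verifying that the bilinear combination of the $s_{ij}(u)$ chosen in the first stage really does factor, under $\chi^{(1)}_1$, as (first-factor Sklyanin-determinant combination) times (product of $s^\bullet$-series); this rests on the explicit $\psi_3$-images of \cite[Proposition 4.8]{AMR} and on the lower-sign bookkeeping in \eqref{Ols:sdet}, the rational prefactor $\tfrac{2u+1}{2u-1}$ being harmless since it is a unit in $\C[[u^{-1}]]$.
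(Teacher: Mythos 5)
Your proposal is correct and follows essentially the same route as the paper's proof: reduce to surjectivity of $\chi^{(1)}_1$, apply it to the same bilinear combination $s_{11}(-\wt{u})\,s_{22}(\wt{u})-s_{1,-2}(-\wt{u})\,s_{-1,2}(\wt{u})$, collapse the first factor to $1$ using the lower-sign form of \eqref{Ols:sdet} together with $\tfrac{2u-1}{2u+1}\,{\rm sdet}\,\Si^\circ(u)=1$ in $SY^-(2)$, and then repeat the induction and invertibility steps of Corollary \ref{DIII:Cor.isomorphism} verbatim. Your "normalized identity" $\si^\circ_{11}(-u)\,\si^\circ_{11}(u-1)+\si^\circ_{1,-1}(-u)\,\si^\circ_{-1,1}(u-1)=1$ is exactly the relation the paper invokes, so the only caveat you flag (the sign bookkeeping in the factorization) is resolved just as you anticipate.
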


\begin{proof}
As in the proof of Corollary \ref{DIII:Cor.isomorphism}, we consider only $\chi^{(1)}_1$. By \eqref{D0:embedding.1} and the definition of $\chi^{(1)}_1$, we have
\begin{align*}
&\chi^{(1)}_1\left(s_{11}(-\wt{u})\,s_{22}(\wt{u})-s_{1,-2}(-\wt{u})\,s_{-1,2}(\wt{u})\right)\\
                   &=\left(\si^\circ_{11}(-u)\,s^\bullet_{-1,-1}(-u)\,\si^\circ_{11}(u-1)\,s^\bullet_{11}(u-1)+\si^\circ_{1,-1}(-u)\,s^\bullet_{-1,-1}(-u)\,\si^\circ_{-1,1}(u-1)\,s^\bullet_{11}(u-1)\right)\\
                   &=\left(\si^\circ_{11}(-u)\,\si^\circ_{11}(u-1)+\si^\circ_{1,-1}(-u)\,\si^\circ_{-1,1}(u-1)\right) s^\bullet_{-1,-1}(-u)\,s^\bullet_{11}(u-1)\\
                   &=s^\bullet_{-1,-1}(-u)\,s^\bullet_{11}(u-1),
\end{align*}
where we have obtained the last equality as consequence of \eqref{Ols:sdet} and the relation $\frac{2u-1}{2u+1}\,{\rm sdet}\,\Si^\circ(u)=1$ in $SY^-(2)$. This implies that the coefficients of the series $s^\bullet_{-1,-1}(-u)\,s^\bullet_{11}(u-1)$ are contained in the image of $\chi^{(1)}_1$. The rest of the proof now proceeds identically to the proof of Corollary \ref{DIII:Cor.isomorphism}. 
\end{proof}

\begin{crl}\label{D0:Cor.Yangian} 
The isomorphism $\chi^{(1)}_1$ descends to the quotients $Y(\mfso_4,\mfso_4)^{tw}$ and $ SY^-(2)\otimes SY^-(2)$, thus inducing an isomorphism 
\begin{equation}
Y(\mfso_4,\mfso_4)^{tw}\cong  SY^-(2)\ot SY^-(2). \label{D0:Yangian}
\end{equation}
\end{crl}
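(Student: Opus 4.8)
The plan is to mimic the proof of Corollary \ref{DIII:Cor.Yangian} line by line; the only structural difference is that both tensor factors are now copies of $Y^-(2)$, so the factor $\gamma_2(u)$ of \eqref{gamma(u)} will have to be accounted for on each side rather than on just one. Recall that $X(\mfso_4,\mfso_4)^{tw}$ carries the unitarity relation $S(u)\,S(-u)=w(u)\cdot I$, where $w(u)$ is an even series whose nonconstant coefficients generate the centre (see \eqref{Suwu}). Applying $\chi^{(1)}_1$ and substituting the embedding \eqref{D0:embedding.1}, with $-u-1/2=-\wt u-1$, I would obtain
\[
\chi^{(1)}_1(w(u))\cdot I\ot I=\chi^{(1)}_1\big(S(u)\,S(-u)\big)=\Si^\circ(\wt u)\,\Si^\circ(-\wt u-1)\,S^\bullet(\wt u)\,S^\bullet(-\wt u-1),
\]
where $\Si^\circ$ and $S^\bullet$ have been commuted past one another because they act on different tensor slots and have coefficients in the two commuting copies of $Y^-(2)$. (This reordering is exactly the step that was obstructed in the $(\mfso_4,\mfgl_2)$ case by the presence of $K_1$.) Thus the right-hand side splits as a product $\big(\Si^\circ(\wt u)\,\Si^\circ(-\wt u-1)\big)\big(S^\bullet(\wt u)\,S^\bullet(-\wt u-1)\big)$ of two scalar matrices, each to be identified using \eqref{Ols:sdet}.

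For the first factor I would read off its $(i,i)$-entry from \eqref{Ols:sdet}, recognising it after the substitution $u\mapsto\al_i(u):=\del_{i,-1}-\mathrm{sgn}(i)\,\wt u$ as $\tfrac{2\al_i-1}{2\al_i+1}\,{\rm sdet}\,\Si^\circ(\al_i)$. Since $\Si^\circ\in SY^-(2)$, relation \eqref{quotAIAII} forces ${\rm sdet}\,\Si^\circ(v)=\gamma_2(v)=\tfrac{2v+1}{2v-1}$, so each diagonal entry equals $\tfrac{2\al_i-1}{2\al_i+1}\cdot\tfrac{2\al_i+1}{2\al_i-1}=1$ and hence $\Si^\circ(\wt u)\,\Si^\circ(-\wt u-1)=I$. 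The same computation applied to the second factor, now inside $Y^-(2)$ where no relation is imposed, gives diagonal entries $\tfrac{2\al_i-1}{2\al_i+1}\,{\rm sdet}\,S^\bullet(\al_i)$, which by \eqref{Ols:qdet->sdet} and \eqref{gamma(u)} are all equal to $\gamma_2(-\wt u)^{-1}\,{\rm sdet}\,S^\bullet(-\wt u)$; the independence of $i$ is guaranteed because the product is already known to be scalar. I therefore expect $\chi^{(1)}_1(w(u))=1\ot\gamma_2(-\wt u)^{-1}\,{\rm sdet}\,S^\bullet(-\wt u)$, exactly as in the $(\mfso_4,\mfgl_2)$ case.

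From here I would conclude as in Corollary \ref{DIII:Cor.Yangian}: $\chi^{(1)}_1$ sends the ideal generated by the nonconstant coefficients of $w(u)$ onto $SY^-(2)\ot J$, where $J\subset Y^-(2)$ is generated by the nonconstant coefficients of $\gamma_2(u)^{-1}\,{\rm sdet}\,S^\bullet(u)$, and since $Y^-(2)/J\cong SY^-(2)$ by \eqref{quotAIAII}, passing to quotients in Corollary \ref{D0:Cor.isomorphism} yields
\[
Y(\mfso_4,\mfso_4)^{tw}\cong\big(SY^-(2)\ot Y^-(2)\big)\big/\big(SY^-(2)\ot J\big)\cong SY^-(2)\ot\big(Y^-(2)/J\big)\cong SY^-(2)\ot SY^-(2).
\]
Alternatively, and more cleanly, I would identify $Y(\mfso_4,\mfso_4)^{tw}$ with the subalgebra of $X(\mfso_4,\mfso_4)^{tw}$ fixed by all automorphisms $\nu_g:S(u)\mapsto g(u-1/2)\,S(u)$ with $g\in 1+u^{-2}\C[[u^{-2}]]$, verify the intertwining $(1\ot\nu^\bullet_g)\circ\chi^{(1)}_1=\chi^{(1)}_1\circ\nu_g$ directly from \eqref{D0:embedding.1}, and note that $SY^-(2)\ot SY^-(2)$ is the fixed subalgebra of $SY^-(2)\ot Y^-(2)$ under the $1\ot\nu^\bullet_g$. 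The main obstacle is purely bookkeeping: confirming that the first factor collapses to $I$ rather than to a nontrivial scalar, which hinges on the $\gamma_2$-twisted defining relation of $SY^-(2)$ cancelling exactly the $\gamma_2$-factor produced by ${\rm sdet}\,\Si^\circ$. Once that cancellation is checked, the argument is formally identical to the $(\mfso_4,\mfgl_2)$ case.
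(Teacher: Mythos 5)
Your proof is correct and takes essentially the same route as the paper, whose own proof just says "proceed as in Corollary \ref{DIII:Cor.Yangian}" and records the key formula $\chi^{(1)}_1(w(u)) = 1\ot\gamma_2(-\wt{u})^{-1}\,{\rm sdet}\,S^\bullet(-\wt{u})$ together with the ideal correspondence $(w(u)-1)\mapsto SY^-(2)\ot J$, $J=({\rm sdet}\,S^\bullet(u)-\gamma_2(u))$. The one adaptation the paper leaves implicit — that the first factor $\Si^\circ(\wt{u})\,\Si^\circ(-\wt{u}-1)$ collapses to $I$ because the relation ${\rm sdet}\,\Si^\circ(v)=\gamma_2(v)$ in $SY^-(2)$ cancels the prefactor $\tfrac{2\al_i-1}{2\al_i+1}$ — is exactly what you verified.
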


\begin{proof}
This corollary is proved the same way as Corollary \ref{DIII:Cor.Yangian}. One first shows that 
\begin{equation}  
\chi^{(1)}_1(w(u))= 1\ot \Big(\frac{u}{u-1}\Big)\, {\rm sdet}\,S^\bullet(-\wt{u})=1\ot\ga_2(-\wt{u})^{-1}\,{\rm sdet}\,S^\bullet(-\wt{u}),
\end{equation}
which implies that the ideal $(w(u)-1)$ in $X(\mfso_4,\mfso_4)^{tw}$ is sent to $SY^+(2)\otimes J$, where $J$ is the ideal $\left({\rm sdet}\,S^\bullet(u)-\gamma_2(u)\right)$ in $Y^-(2)$. This correspondence induces the
isomorphism \eqref{D0:Yangian}.
\end{proof}

We have an analogue of Corollary \ref{Yso4gl2B}.

\begin{crl}
Since $Y^-(2)\cong\mcB(2,0)$ and $SY^-(2)\cong \mathcal{SB}(2,0)$, the isomorphisms introduced in Corollary \ref{D0:Cor.isomorphism} and Corollary \ref{D0:Cor.Yangian} can be expressed in terms of the reflection algebras $\mcB(2,0)$ and $\mathcal{SB}(2,0)$ as follows (see \eqref{SYSB0}):
\begin{equation*}
 X(\mfso_4,\mfso_4)^{tw}\cong \mathcal{SB}(2,0)\ot \mcB(2,0) \quad \mathrm{ and } \quad  X(\mfso_4,\mfso_4)^{tw}\cong \mcB(2,0)\ot \mathcal{SB}(2,0),
\end{equation*}
while 
\begin{equation*}
Y(\mfso_4,\mfso_4)^{tw}\cong \mathcal{SB}(2,0)\otimes \mathcal{SB}(2,0).
\end{equation*}
\end{crl}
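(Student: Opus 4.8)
The plan is to obtain these isomorphisms by transporting the tensor-factor descriptions of Corollaries \ref{D0:Cor.isomorphism} and \ref{D0:Cor.Yangian} through the known identifications of Olshanskii's twisted Yangian $Y^-(2)$ (and its special quotient $SY^-(2)$) with the Molev--Ragoucy reflection algebra $\mcB(2,0)$ (and $\mcS\mcB(2,0)$), exactly as was done for the $(\mfso_4,\mfgl_2)$ case in Corollary \ref{Yso4gl2B}.

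First I would recall the explicit isomorphism $SY^-(2)\cong\mcS\mcB(2,0)$ furnished by \eqref{SYSB0}, namely the shift $\mcT^\circ(u)\mapsto\mcT^\circ(u+1/2)$ sending $\Si^\circ(u)$ to $B^\circ(u+1/2)$. As remarked at the end of Section \ref{Sec:4.2}, the tensor product decomposition \eqref{TY=TZ*TSY} allows one to lift this to an isomorphism $Y^-(2)\cong\mcB(2,0)$ of the full algebras.

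Next I would apply these two identifications factor-by-factor. For the first isomorphism of Corollary \ref{D0:Cor.isomorphism}, $X(\mfso_4,\mfso_4)^{tw}\cong SY^-(2)\ot Y^-(2)$, I apply $SY^-(2)\cong\mcS\mcB(2,0)$ to the first factor and $Y^-(2)\cong\mcB(2,0)$ to the second, obtaining $X(\mfso_4,\mfso_4)^{tw}\cong\mcS\mcB(2,0)\ot\mcB(2,0)$; symmetrically, the second isomorphism of that corollary yields $X(\mfso_4,\mfso_4)^{tw}\cong\mcB(2,0)\ot\mcS\mcB(2,0)$. For the Yangian, I apply $SY^-(2)\cong\mcS\mcB(2,0)$ to both factors of $Y(\mfso_4,\mfso_4)^{tw}\cong SY^-(2)\ot SY^-(2)$ from Corollary \ref{D0:Cor.Yangian}, obtaining $Y(\mfso_4,\mfso_4)^{tw}\cong\mcS\mcB(2,0)\ot\mcS\mcB(2,0)$.

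The argument is essentially formal and I do not expect a genuine obstacle; the only point requiring a little care is that the identification $Y^-(2)\cong\mcB(2,0)$ of the full algebras is not established directly but is deduced from the special-algebra isomorphism $SY^-(2)\cong\mcS\mcB(2,0)$ together with the centre-times-special-part splitting \eqref{TY=TZ*TSY}. Once this is in hand, both factors on each side carry compatible isomorphisms and one simply tensors them, so no further compatibility check beyond that already implicit in Corollaries \ref{D0:Cor.isomorphism} and \ref{D0:Cor.Yangian} is needed.
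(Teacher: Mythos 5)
Your proposal is correct and follows exactly the paper's route: the paper justifies this corollary (as in the analogous Corollary \ref{Yso4gl2B}) by invoking the explicit shift isomorphism $SY^-(2)\cong\mcS\mcB(2,0)$ of \eqref{SYSB0}, extending it to $Y^-(2)\cong\mcB(2,0)$ via the decomposition \eqref{TY=TZ*TSY}, and then applying these identifications factor-by-factor to Corollaries \ref{D0:Cor.isomorphism} and \ref{D0:Cor.Yangian}. Nothing further is needed.
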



\subsection{Twisted Yangians for the symmetric pairs  \texorpdfstring{$(\mfso_4,\mfso_2\op\mfso_2)$}{(so4,so_2+so_2)}}

In this section we aim to establish similar results to the last two sections for the extended twisted Yangian $X(\mfso_4,\mfso_2\oplus\mfso_2)^{tw}$ of type DI. Recall that $\mcG=\sum_{i=1}^2(E_{i,-i}+E_{-i,i})$. Define the matrix $A\in SL_4(\C)$ by 
$$
A=\frac{1}{2}\text{\small$\begin{pmatrix}
        i & -1 & -1 & i\\
        i & 1 & -1 & -i\\
        i & -1 & 1 & -i\\
       -i & -1 & -1 & -i
       \end{pmatrix}$}
\qu\text{giving}\qu
A\mcG A^t=\text{\small$\begin{pmatrix}
              1 & 0 & 0 & 0\\
              0 & -1 & 0 & 0\\
              0 & 0 & -1 & 0\\
              0 & 0 & 0 & 1
            \end{pmatrix}$}.
$$
Notice that $A^t=A^{-1}$, so $X(\mfso_4,\mcG)^{tw}\cong X(\mfso_4,A\mcG A^{t})^{tw}$. Accordingly, for the remainder of this section, we shall replace the matrix $\mcG$ with the matrix $\mcG^\prime$ given by $\mcG^\prime=A\mcG A^t$. 

\smallskip 
 
For the remainder of this section, we will be working with the tensor product $Y^+(2)\otimes Y^+(2)$. We keep the same notation for tensor products of generators used in the last two sections. 
       
\begin{prop}\label{DI:Prop.embedding}
The assignment
\eq{
S(u)\mapsto S^\circ(u-1/2)\,K_1\,S^\bullet(u-1/2)\,K_2 \label{DI:embedding.1}
}
defines an embedding $\varphi_2:X(\mfso_4,\mfso_2\oplus \mfso_2)^{tw}\hookrightarrow Y^+(2)\otimes Y^+(2)$. 
\end{prop}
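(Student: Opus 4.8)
The plan is to follow the template established in the proofs of Propositions~\ref{DIII:Prop.embedding} and~\ref{D0:Prop.embedding}. Since $X(\mfso_4,\mfso_2\op\mfso_2)^{tw}$ is by definition a subalgebra of $X(\mfso_4)$, and $\psi_3$ is an embedding of $X(\mfso_4)$ into $Y(2)\ot Y(2)$ by~\eqref{X->Y:3}, the restriction $\varphi_2=\psi_3|_{X(\mfso_4,\mfso_2\op\mfso_2)^{tw}}$ is automatically injective into $Y(2)\ot Y(2)$. Hence the whole content of the statement is to verify two things: that $\varphi_2$ is given by the assignment~\eqref{DI:embedding.1}, and that the image of this assignment lands in the subalgebra $Y^+(2)\ot Y^+(2)$, where each copy of $Y^+(2)$ is identified inside $Y(2)$ via~\eqref{S=TGT}.

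The key structural input, and the one feature distinguishing this case from the previous two, is the interpretation of $\mcG'$ inside $\End\,\C^2\ot\End\,\C^2$. In the ordered basis $\{v_{-2},v_{-1},v_1,v_2\}$ of $V=\C^2\ot\C^2$ we have $\mcG'=\mathrm{diag}(1,-1,-1,1)$, and a direct check on each basis vector (using $K\,e_{-1}=-e_{-1}$ and $K\,e_1=e_1$) shows that this operator is precisely $K\ot K=K_1K_2$. This is to be contrasted with $\mcG=K_1$ in the type DIII case of Proposition~\ref{DIII:Prop.embedding} and $\mcG=I$ in the type D0 case of Proposition~\ref{D0:Prop.embedding}; it is the two-sided appearance of $K$ here that will force \emph{both} tensor factors to be $Y^+(2)$.

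With this identification in hand, the computation proceeds exactly as in~\eqref{DIII.embedding.2}. Viewing $X(\mfso_4,\mfso_2\op\mfso_2)^{tw}$ inside $X(\mfso_4)$ we have $S(u)=T(\wt{u})\,\mcG'\,T(-\wt{u})^{t_+}$; applying $\psi_3$, letting the constant matrix $\mcG'=K_1K_2$ pass through unchanged, and invoking $\psi_3(T(u)^{t_+})=T^\circ(u)^{t_-}T^\bullet(u)^{t_-}$ from the proof of Proposition~\ref{DIII:Prop.embedding}, we obtain
\eqn{
\psi_3(S(u))=T^\circ(\wt{u})\,T^\bullet(\wt{u})\,K_1K_2\,T^\circ(-\wt{u})^{t_-}T^\bullet(-\wt{u})^{t_-}.
}
To recognise the right-hand side as~\eqref{DI:embedding.1}, I would use the transpose identities $K_1\,T^\circ(-\wt{u})^{t_-}K_1=T^\circ(-\wt{u})^{t_+}$ and $K_2\,T^\bullet(-\wt{u})^{t_-}K_2=T^\bullet(-\wt{u})^{t_+}$ (the second holding by the same $2\times2$ argument as the first), together with $K_1^2=K_2^2=I$ and the commutativity of operators carrying distinct tensor labels. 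These give $S^\circ(\wt{u})K_1=T^\circ(\wt{u})K_1T^\circ(-\wt{u})^{t_-}$ and $S^\bullet(\wt{u})K_2=T^\bullet(\wt{u})K_2T^\bullet(-\wt{u})^{t_-}$, and commuting the label-$1$ factor $T^\circ(-\wt{u})^{t_-}$ rightward past the label-$2$ factors shows that $S^\circ(\wt{u})\,K_1\,S^\bullet(\wt{u})\,K_2$ equals the displayed expression. Thus $\varphi_2$ is given by~\eqref{DI:embedding.1}, and since the coefficients of $S^\circ(\wt{u})$ and $S^\bullet(\wt{u})$ lie in the two copies of $Y^+(2)$ while $K_1,K_2$ are constant, the image is contained in $Y^+(2)\ot Y^+(2)$.

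The main obstacle is not conceptual but one of bookkeeping: one must track carefully which of the two tensor slots each operator acts on when commuting factors past one another, and confirm that the matrices $K_1,K_2$ land in the positions asserted in~\eqref{DI:embedding.1}. The only genuinely new verification relative to the earlier propositions is the identity $\mcG'=K_1K_2$; once that is in place the manipulation is formally identical to~\eqref{DIII.embedding.2}, and injectivity is inherited for free from $\psi_3$.
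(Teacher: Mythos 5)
Your proposal is correct and follows essentially the same route as the paper's proof: restrict $\psi_3$ so that injectivity is inherited for free, identify $\mcG'=K_1K_2$ inside $\End\,\C^2\ot\End\,\C^2$, apply $\psi_3(T(u)^{t_+})=T^\circ(u)^{t_-}\,T^\bullet(u)^{t_-}$, and use the conjugation identity $K\,T(u)^{t_-}K=T(u)^{t_+}$ together with commutativity of operators in distinct tensor legs to arrive at \eqref{DI:embedding.1}. The only cosmetic differences are that you verify $\mcG'=K_1K_2$ explicitly on basis vectors (the paper simply asserts it) and that you work from the target expression $S^\circ(\wt{u})\,K_1\,S^\bullet(\wt{u})\,K_2$ back to $\psi_3(S(u))$ rather than transforming $\psi_3(S(u))$ forward step by step.
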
   
       
\begin{proof}
The argument is similar to that used in Propositions \ref{DIII:Prop.embedding} and \ref{D0:Prop.embedding}, so we omit most details. Set $\varphi_2=\psi_3|_{X(\mfso_4,\mfso_2\oplus\mfso_2)^{tw}}$. Since we already know this defines an embedding, to complete the proof of the proposition, we only have to show that it is given by the assignment \eqref{DI:embedding.1}. We have 
$$
S(u)=T(u-1/2)\,\mcG^\prime\, T(-u+1/2)^{t_+}.
$$
Therefore, 
\begin{align*}
\psi_3(S(u)) = {} & \psi_3(T(\wt{u}))\,\mcG^\prime\,\psi_3(T(-\wt{u})^{t_+}) = T^\circ(\wt{u})\, T^\bullet(\wt{u})\,\mcG^\prime \, T^{\circ}(-\wt{u})^{t_-} T^{\bullet}(-\wt{u})^{t_-} \\
= {} & T^\circ(\wt{u})\, K_1\, T^{\circ}(-\wt{u})^{t_-} T^\bullet(\wt{u}) \,K_2\, T^{\bullet}(-\wt{u})^{t_-} \qu\text{since } \mcG^\prime= K_1 K_2; \\
= {} & T^\circ(\wt{u})\, T^{\circ}(-\wt{u})^{t_+} K_1\, T^\bullet(\wt{u})\,T^{\bullet}(-\wt{u})^{t_+} K_2 \\
= {} &  S^\circ(u-1/2)\,K_1\,S^\bullet(u-1/2)\,K_2 .
\end{align*}
\end{proof}

Explicit formulas for the images of the generators $s_{ij}(u)$ can then be obtain by replacing $t_{ab}(u)$ by $s_{ab}^{\circ}(\wt{u})$ and $t^{\prime}_{ab}(u)$ by $s_{ab}^{\bullet}(\wt{u})$ in \cite[Proposition 4.8]{AMR} and inserting signs appropriately to take $K_1$ and $K_2$ into account.


\begin{crl}\label{D1:Cor.isomorphism}
The compositions $\chi^{(1)}_2=\mathrm{Pr}_1\circ \varphi_2$ and $\chi^{(2)}_2=\mathrm{Pr}_2\circ\varphi_2$ define isomorphisms 
\begin{equation}
X(\mfso_4,\mfso_2\oplus \mfso_2)^{tw}\cong SY^+(2)\otimes Y^+(2) \quad \mathrm{ and } \quad X(\mfso_4,\mfso_2\oplus \mfso_2)^{tw}\cong Y^+(2)\otimes SY^+(2),
\end{equation}
respectively. 
\end{crl}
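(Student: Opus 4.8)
The plan is to follow verbatim the strategy of Corollaries \ref{DIII:Cor.isomorphism} and \ref{D0:Cor.isomorphism}, treating only $\chi^{(1)}_2$ in detail since the argument for $\chi^{(2)}_2$ is entirely symmetric (one simply exchanges the roles of the two tensor factors). First I would note that $\chi^{(1)}_2 = \chi^{(1)}|_{X(\mfso_4,\mfso_2\op\mfso_2)^{tw}}$, so injectivity of $\chi^{(1)}_2$ is immediate from the fact that $\chi^{(1)}\colon X(\mfso_4) \iso SY(2)\ot Y(2)$ is an isomorphism (\cite[Corollary 4.10]{AMR}). Applying $\mathrm{Pr}_1$ to the first tensor factor in the embedding formula \eqref{DI:embedding.1} gives $\chi^{(1)}_2(S(u)) = \Si^\circ(\wt u)\,K_1\,S^\bullet(\wt u)\,K_2$, from which the containment $\Ima\,\chi^{(1)}_2 \subset SY^+(2)\ot Y^+(2)$ is clear. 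The substance of the proof is therefore surjectivity.

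For surjectivity I would first locate the coefficients of $s^\bullet_{11}(u)$ inside the image. The entry-by-entry form of $\chi^{(1)}_2(S(u))$ is $[S(u)]_{(a,b),(c,d)} = \mathrm{sgn}(c)\,\mathrm{sgn}(d)\,\si^\circ_{ac}(\wt u)\,s^\bullet_{bd}(\wt u)$ in the basis $\{v_{-2},v_{-1},v_1,v_2\}$, the extra factor $\mathrm{sgn}(d)$ (relative to the single $K_1$ appearing in the $(\mfso_4,\mfgl_2)^{tw}$ case) being the effect of $K_2$. Using the images of the $s_{ij}(u)$ read off from \cite[Proposition 4.8]{AMR} with the signs from $K_1,K_2$ inserted as indicated after Proposition \ref{DI:Prop.embedding}, I would apply $\chi^{(1)}_2$ to the appropriately signed analogue of $s_{11}(-\wt u)\,s_{22}(\wt u) - s_{1,-2}(-\wt u)\,s_{-1,2}(\wt u)$. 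Invoking the $Y^+(2)$ form of \eqref{Ols:sdet} together with the relation ${\rm sdet}\,\Si^\circ(u) = 1$ valid in $SY^+(2)$ (see \eqref{quotAIAII}, where $\ga_2(u)=1$ for the $+$ case), this combination collapses to a product of two entries of $S^\bullet$ of the shape $s^\bullet_{-1,-1}(-u)\,s^\bullet_{11}(u-1)$. Rewriting $s^\bullet_{-1,-1}(-u)$ via the symmetry relation \eqref{Ols:symm} and expanding the resulting product in $Y^+(2)[[u^{-1}]]$, an induction on the coefficient degree shows that every coefficient of $s^\bullet_{11}(u)$ lies in $\Ima\,\chi^{(1)}_2$.

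From here the bootstrap is identical to that in Corollary \ref{DIII:Cor.isomorphism}. Since $s^{\bullet(0)}_{11}=1$, the series $s^\bullet_{11}(u)$ is invertible, so there is a preimage $\mathsf{z}(u)\in X(\mfso_4,\mfso_2\op\mfso_2)^{tw}[[u^{-1}]]$ with $\mathsf{z}^{(0)}=1$, hence $\mathsf{z}(u)$ is invertible and $\chi^{(1)}_2(\mathsf{z}(u)^{-1}) = s^\bullet_{11}(u)^{-1}$. The explicit entry formulas from \eqref{DI:embedding.1} then express each $\si^\circ_{ij}(u)$ as the image under $\chi^{(1)}_2$ of a product of some $s_{ab}(u+1/2)$ with $\mathsf{z}(u)^{-1}$ (with the signs dictated by $K_1,K_2$), placing all $\si^\circ_{ij}(u)$ in the image. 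Finally, invertibility of $\si^\circ_{11}(u)$ (again $\si^{\circ(0)}_{11}=1$) and the same device applied to a preimage $\mathsf{y}(u)$ of $\si^\circ_{11}(u)$ yield the remaining $s^\bullet_{ij}(u)$ in $\Ima\,\chi^{(1)}_2$. This proves surjectivity, and hence that $\chi^{(1)}_2$ is an isomorphism; the argument for $\chi^{(2)}_2$ is the same.

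The main obstacle I anticipate is purely the bookkeeping of the signs introduced by the two diagonal matrices $K_1$ and $K_2$: unlike the $(\mfso_4,\mfgl_2)^{tw}$ case, where a single $K_1$ appears and the factors are $Y^+(2)$ and $Y^-(2)$, here both factors are $Y^+(2)$ and conjugation by $K_2$ on the right contributes the additional factor $\mathrm{sgn}(d)$ to each matrix entry. Identifying the precise signed combination of the $s_{ij}(u)$ whose image factors as ${\rm sdet}\,\Si^\circ$ times a nontrivial product of $s^\bullet$ entries — rather than into an inert expression — is the one genuinely case-specific computation; once that combination is pinned down, every subsequent step transfers word for word from the two preceding corollaries.
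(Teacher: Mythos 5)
Your proposal is correct and follows essentially the same route as the paper: restrict $\chi^{(1)}$ to get injectivity and the containment of the image in $SY^+(2)\otimes Y^+(2)$, then prove surjectivity by applying $\chi^{(1)}_2$ to the signed combination $-s_{11}(-\wt{u})\,s_{22}(\wt{u})+s_{1,-2}(-\wt{u})\,s_{-1,2}(\wt{u})$ (the sign flip relative to the DIII case being exactly the effect of $K_2$ you identified), collapsing it via \eqref{Ols:sdet} and ${\rm sdet}\,\Si^\circ(u)=1$ in $SY^+(2)$ to $s^\bullet_{-1,-1}(-u)\,s^\bullet_{11}(u-1)$, and finishing with the same $\mathsf{z}(u)$/$\mathsf{y}(u)$ bootstrap as in Corollary \ref{DIII:Cor.isomorphism}. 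The only detail you left implicit — the precise sign of the quadratic combination — is the one the paper supplies, and your sign analysis of the matrix entries already determines it.
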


\begin{proof}
As in the proof of Corollaries \ref{DIII:Cor.isomorphism} and \ref{D0:Cor.isomorphism}, we consider only $\chi^{(1)}_2$. By \eqref{DI:embedding.1} and definition of $\chi^{(1)}_2$, we have
\begin{align*}
\chi^{(1)}_2 & \left(-s_{11}(-\wt{u})\,s_{22}(\wt{u})+s_{1,-2}(-\wt{u})\,s_{-1,2}(\wt{u})\right)\\
                   &=\left(\si^\circ_{11}(-u)\,s^\bullet_{-1,-1}(-u)\,\si^\circ_{11}(u-1)s^\bullet_{11}(u-1)-\si^\circ_{1,-1}(-u)\,s^\bullet_{-1,-1}(-u)\,\si^\circ_{-1,1}(u-1)\,s^\bullet_{11}(u-1)\right)\\
                   &=\left(\si^\circ_{11}(-u)\,\si^\circ_{11}(u-1)-\si^\circ_{1,-1}(-u)\,\si^\circ_{-1,1}(u-1)\right)s^\bullet_{-1,-1}(-u)\,s^\bullet_{11}(u-1)\\
                   &=s^\bullet_{-1,-1}(-u)\,s^\bullet_{11}(u-1),
\end{align*}
where we have obtained the last equality as consequence of relation \eqref{Ols:sdet} and the fact that ${\rm sdet}\,\Si^\circ(u)=1$ in $SY^+(2)$. This implies that the coefficients of the series $s^\bullet_{-1,-1}(-u)\,s^\bullet_{11}(u-1)$ are contained in the image of $\chi^{(1)}_2$. The rest of the proof now proceeds identically to the proof of Corollary \ref{DIII:Cor.isomorphism}. 
\end{proof}

\begin{crl}\label{D1:Cor.Yangian}
The isomorphism $\chi^{(1)}_2$ descends to the quotients $Y(\mfso_4,\mfso_2\oplus \mfso_2)^{tw}$ and $SY^+(2)\otimes SY^+(2)$, thus inducing an isomorphism 
\begin{equation}
Y(\mfso_4,\mfso_2\oplus \mfso_2)^{tw}\cong  SY^+(2)\otimes SY^+(2). \label{D1:Yangian}
\end{equation}
\end{crl}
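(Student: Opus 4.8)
The plan is to follow the template of Corollary \ref{DIII:Cor.Yangian}, tracking the central unitarity series $w(u)$ (with $S(u)\,S(-u)=w(u)\cdot I$ and $w(u)$ even with constant term $1$, see \eqref{Suwu}) through the isomorphism $\chi^{(1)}_2$ of Corollary \ref{D1:Cor.isomorphism}. Writing $\wt u = u-1/2$ and applying $\chi^{(1)}_2$ to $S(u)\,S(-u)$ via the explicit embedding \eqref{DI:embedding.1}, I would note that the operators carrying the superscript $\circ$ (acting on the first tensor factor of $\C^2\ot\C^2$, with coefficients in $SY^+(2)$) commute with those carrying $\bullet$ (acting on the second factor, with coefficients in $Y^+(2)$). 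This lets me factor
\[
\chi^{(1)}_2\big(S(u)\,S(-u)\big) = \big(\Si^\circ(\wt u)\,K_1\,\Si^\circ(-\wt u-1)\,K_1\big)\big(S^\bullet(\wt u)\,K_2\,S^\bullet(-\wt u-1)\,K_2\big).
\]
Since the left-hand side is a scalar multiple of $I\ot I$, each of the two bracketed factors must separately be a multiple of the identity.

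For the first factor the computation is identical to the one in Corollary \ref{DIII:Cor.Yangian}: its $E_{ii}$-coefficient is $\mathrm{sgn}(i)\,\si^\circ_{i1}(\wt u)\,\si^\circ_{1i}(-\wt u-1)-\mathrm{sgn}(i)\,\si^\circ_{i,-1}(\wt u)\,\si^\circ_{-1,i}(-\wt u-1)$, which by \eqref{Ols:sdet} equals ${\rm sdet}\,\Si^\circ(\alpha_i(u))$ with $\alpha_i(u)=\delta_{i,-1}-\mathrm{sgn}(i)\,\wt u$; as $SY^+(2)=Y^+(2)/({\rm sdet}\,S^\circ(u)-1)$ this is $1$, so the first factor equals $I$. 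Knowing this, it suffices to evaluate a single diagonal entry of the second factor. Using $K=E_{11}-E_{-1,-1}$, the two copies of $K_2$ introduce a sign so that the $(1,1)$-entry of $S^\bullet(\wt u)\,K_2\,S^\bullet(-\wt u-1)\,K_2$ becomes $s^\bullet_{11}(\wt u)\,s^\bullet_{11}(-\wt u-1)-s^\bullet_{1,-1}(\wt u)\,s^\bullet_{-1,1}(-\wt u-1)$, which by the $Y^+(2)$ form of \eqref{Ols:sdet} is exactly ${\rm sdet}\,S^\bullet(-\wt u)$. Crucially, and in contrast to Corollary \ref{DIII:Cor.Yangian}, no rational prefactor $\gamma_2$ appears, because the second copy here is $Y^+(2)$ rather than $Y^-(2)$ (so $\gamma_2\equiv1$ by \eqref{gamma(u)}). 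Hence $\chi^{(1)}_2(w(u)) = 1\ot {\rm sdet}\,S^\bullet(-\wt u)$.

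It then follows that $\chi^{(1)}_2$ maps the ideal of $X(\mfso_4,\mfso_2\op\mfso_2)^{tw}$ generated by the nonconstant coefficients of $w(u)$ onto $SY^+(2)\ot J$, where $J$ is the ideal of $Y^+(2)$ generated by the nonconstant coefficients of ${\rm sdet}\,S^\bullet(u)$ (the shift $u\mapsto -\wt u$ does not alter the generated ideal). Since $SY^+(2)\cong Y^+(2)/J$ by \eqref{quotAIAII} with $\gamma_2\equiv1$, passing to quotients yields
\[
Y(\mfso_4,\mfso_2\op\mfso_2)^{tw}\cong (SY^+(2)\ot Y^+(2))/(SY^+(2)\ot J)\cong SY^+(2)\ot (Y^+(2)/J)\cong SY^+(2)\ot SY^+(2),
\]
which is \eqref{D1:Yangian}. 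As an alternative, one can argue as in the second half of the proof of Corollary \ref{DIII:Cor.Yangian}, realizing $Y(\mfso_4,\mfso_2\op\mfso_2)^{tw}$ as the subalgebra of $X(\mfso_4,\mfso_2\op\mfso_2)^{tw}$ fixed by all automorphisms $\nu_g$ and checking that $\chi^{(1)}_2$ intertwines $\nu_g$ with $1\ot\nu^\bullet_g$. The only genuinely delicate point, and the step I would verify most carefully, is the sign bookkeeping produced by the two insertions of $K_2$: it is precisely these signs that select the plus-sign ($Y^+(2)$) version of the Sklyanin determinant in \eqref{Ols:sdet} and thereby eliminate the $\gamma_2$ factor, giving the clean quotient identification above.
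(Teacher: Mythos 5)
Your proposal is correct and follows essentially the same route as the paper: the paper's proof of this corollary simply states that one shows $\chi^{(1)}_2(w(u))=1\otimes{\rm sdet}\,S^\bullet(-\wt{u})$ (exactly your computation, including the absence of the $\gamma_2$ factor) and then passes to quotients via $SY^+(2)\otimes J$, which is precisely your argument. Your sign bookkeeping for the two insertions of $K_2$ and the identification of the $(1,1)$-entry with the $Y^+(2)$ Sklyanin determinant via \eqref{Ols:sdet} are accurate, so the proposal fills in, correctly, the details the paper leaves to the reader by reference to Corollary \ref{DIII:Cor.Yangian}.
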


\begin{proof}
This corollary is proved the same way as corollaries \ref{DIII:Cor.Yangian} and \ref{D0:Cor.Yangian}. One first shows that 
\begin{equation}  
\chi^{(1)}_2(w(u))=1\otimes {\rm sdet}\,S^\bullet(-\wt{u}),
\end{equation}
which implies that the ideal $(w(u)-1)$ in $X(\mfso_4,\mfso_2\oplus\mfso_2)^{tw}$ is sent to $SY^+(2)\otimes J$, where $J$ is the ideal $\left({\rm sdet}\,S^\bullet(u)-1\right)$ in $Y^+(2)$. This correspondence induces the isomorphism \eqref{D1:Yangian}.
\end{proof}

\begin{crl}
Since $Y^+(2)\cong\mcB(2,1)$ and $SY^+(2)\cong \mathcal{SB}(2,1)$, the isomorphisms introduced in Corollary \ref{D1:Cor.isomorphism} and Corollary \ref{D1:Cor.Yangian} can be expressed
in terms of the reflection algebras $\mcB(2,1)$ and $\mathcal{SB}(2,1)$ as follows (see \eqref{SYSB1}):
\begin{equation*}
X(\mfso_4,\mfso_2\oplus\mfso_2)^{tw}\cong \mathcal{SB}(2,1)\otimes \mcB(2,1) \quad \mathrm{ and } \quad  X(\mfso_4,\mfso_2\oplus\mfso_2)^{tw}\cong \mcB(2,1)\otimes \mathcal{SB}(2,1),
\end{equation*}
and  
\begin{equation*}
 Y(\mfso_4,\mfso_2\oplus\mfso_2)^{tw}\cong \mathcal{SB}(2,1)\otimes \mathcal{SB}(2,1).
\end{equation*}
\end{crl}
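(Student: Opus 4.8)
The plan is to reduce everything to the isomorphisms of Corollaries \ref{D1:Cor.isomorphism} and \ref{D1:Cor.Yangian}, combined with the two identifications $SY^+(2)\cong \mathcal{SB}(2,1)$ and $Y^+(2)\cong\mcB(2,1)$ that are recalled in the statement's hypothesis. The first of these is established in Section \ref{Sec:4.2} via \eqref{SYSB1}, and its extension to the full algebras (through the tensor-product decomposition \eqref{TY=TZ*TSY}) gives the second. The whole argument is the exact analogue of the proof of Corollary \ref{Yso4gl2B}, with the index pair $(q,q')=(1,0)$ used there replaced throughout by $(1,1)$, which is the pair attached to the symmetric pair $(\mfso_4,\mfso_2\oplus\mfso_2)$.

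Concretely, I would first take the isomorphism $X(\mfso_4,\mfso_2\oplus\mfso_2)^{tw}\cong SY^+(2)\otimes Y^+(2)$ of Corollary \ref{D1:Cor.isomorphism} and tensor the two factor-isomorphisms $SY^+(2)\cong\mathcal{SB}(2,1)$ and $Y^+(2)\cong\mcB(2,1)$ to obtain $X(\mfso_4,\mfso_2\oplus\mfso_2)^{tw}\cong \mathcal{SB}(2,1)\otimes\mcB(2,1)$; the second displayed isomorphism follows identically starting from $X(\mfso_4,\mfso_2\oplus\mfso_2)^{tw}\cong Y^+(2)\otimes SY^+(2)$. For the twisted-Yangian statement I would instead apply $SY^+(2)\cong\mathcal{SB}(2,1)$ to both tensor factors of the isomorphism $Y(\mfso_4,\mfso_2\oplus\mfso_2)^{tw}\cong SY^+(2)\otimes SY^+(2)$ coming from Corollary \ref{D1:Cor.Yangian}, which yields $Y(\mfso_4,\mfso_2\oplus\mfso_2)^{tw}\cong\mathcal{SB}(2,1)\otimes\mathcal{SB}(2,1)$.

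There is no genuine obstacle here: tensoring $\C$-algebra isomorphisms over $\C$ produces an isomorphism of the tensor products, and the two copies of $Y^+(2)$ in the target of $\varphi_2$ (Proposition \ref{DI:Prop.embedding}) sit in independent tensor factors, so no cross-compatibility between the two factor-isomorphisms needs to be checked. The only point deserving a line of justification is that $Y^+(2)\cong\mcB(2,1)$ is truly an \emph{algebra} isomorphism and not merely a linear identification; this is precisely the content of the extension of $SY^+(2)\cong\mathcal{SB}(2,1)$ to the full algebras via \eqref{TY=TZ*TSY}, as recorded at the end of Section \ref{Sec:4.2}, so it may simply be invoked.
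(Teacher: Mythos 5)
Your proposal is correct and follows essentially the same route as the paper: both arguments simply compose the isomorphisms of Corollaries \ref{D1:Cor.isomorphism} and \ref{D1:Cor.Yangian} with the factor-isomorphisms $SY^+(2)\cong\mathcal{SB}(2,1)$ from \eqref{SYSB1} and its extension $Y^+(2)\cong\mcB(2,1)$ obtained via the tensor-product decomposition \eqref{TY=TZ*TSY}, exactly as in the proof of Corollary \ref{Yso4gl2B} with the pair $(1,0)$ replaced by $(1,1)$. Your additional remark that tensoring algebra isomorphisms over $\C$ requires no cross-compatibility check is a fair point the paper leaves implicit.
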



\section{Isomorphisms with twisted Yangians in the Drinfeld's original presentation}

In the previous sections, we obtained isomorphisms for twisted Yangians of small rank in the $RTT$-presentation. Here we will determine isomorphisms with twisted Yangians of $\mfsl_2$ defined in Drinfeld's original presentation \cite{Dr1,Dr2,BeRe}.  We will use the calligraphic  letter $\mcY$ to denote Yangians in this presentation.


\subsection{Yangians and twisted Yangians in Drinfeld's original presentation}

The next definition is due to Drinfeld \cite{Dr1}.

\begin{defn}\label{JdefY2}
The Yangian $\mcY_{2}$ in Drinfeld's original presentation is the unital associative $\C$-algebra generated by the elements $h, e, f$ and $J(h), J(e), J(f)$ satisfying
\eqa{ 
& [h,e]=2e, \quad [h,f]=-2f, \quad [e,f]=h, \label{Y2:Lie} \\
& [h,J(e)]=[J(h),e]=2J(e), \quad [h,J(f)]=[J(h),f]=-2J(f), \quad [e,J(f)]=[J(e),f]=J(h), \label{Y2:LJ} \\
& \big[J(h),[J(e),J(f)]\big] = \big(J(f)\,e-fJ(e)\big)h. \label{Y2:JJJ} 
}
The Hopf algebra structure on $\mcY_{2}$ is given by
\eqa{
&\Delta(x)=x\ot1+1\ot x, \qq
\Delta(J(x))=J(x)\ot1+1\ot J(x) + \frac{1}{2}\,[x\ot1,\Omega] , \el
&\qq S(x)=-x , \qq S(J(x))=-J(x)+ x ,\qq \epsilon(x)=\epsilon(J(x))=0, \nn
}
for all $x\in\{h,e,f\}$; here $\Omega=e\ot f + f\ot e + \frac{1}{2}h\ot h$ is the two-site quadratic Casimir element. The Yangian $\mcY_{2}$ becomes a filtered algebra if we set $\deg(x)=0$ and $\deg(J(x))=1 \; \forall \, x\in\{h,e,f\} $.
\end{defn}

The next two definitions are due to S. Belliard and N. Crampe \cite{BeCr} (also see \cite[Section 5]{BeRe}).

\begin{defn} 
The orthogonal twisted Yangian $\mcY^+_{2}$ in Drinfeld's original presentation is the unital associative $\C$-algebra generated by the elements $k, E, F$ satisfying
\eq{ 
[k, E]=2E, \quad [k,F]=-2F, \quad \Big[ E,\big[ E,[F,E]\big]\Big] = 12  E k E,  \quad \Big[ F,\big[ F,[F,E]\big]\Big] = 12 F k F . \label{Y2+:comms}
}
An embedding $\varphi^+:\mcY^+_{2}\hookrightarrow \mcY_{2}$ is provided by:
\eq{
\varphi^+(k) = h, \qq \varphi^+(E) = J(e) - \frac{1}{4}(eh + he), \qq \varphi^+(F) = J(f) + \frac{1}{4} (fh+hf). \label{Y2+:emb}
}
and the left coideal structure on $\mcY^+_{2}$ is given by
$$
\Delta(k)= h\ot1+1\ot k, \quad \Delta(E)= \varphi^+(E)\ot1+1\ot E - \,e\ot k, \quad \Delta(F)= \varphi^+(F)\ot1+1\ot F + \,f\ot k.
$$ 
The counit is given by $\epsilon(E)=\epsilon(F)=0$ and $\epsilon(k)=c$ with $c\in\C$.  $\mcY^+_{2}$ becomes a filtered algebra if we set $\deg(k)=0$ and  $\deg(E)=\deg(F)=1$.

\end{defn}

\begin{defn} 
The symplectic twisted Yangian $\mcY^-_{2}$ in Drinfeld's original presentation is the unital associative $\C$-algebra generated by the elements $h, e, f$ and $G(h), G(e), G(f)$ satisfying
\eqa{ 
& [h,e]=2e, \quad [h,f]=-2f, \quad [e,f]=h, \label{Y2-:Lie} \\
& [h,G(e)]=[G(h),e]=2G(e), \quad [h,G(f)]=[G(h),f]=-2G(f), \quad [e,G(f)]=[G(e),f]=G(h), \label{Y2-:LG} \\
& \big[ G(h),[G(e),G(f)]\big] = 4\big(\{e,G(f),G(h)\}-\{f,G(e),G(h)\}\big). \label{Y2-:GGG} 
}
Here, $\{x_i,x_j,x_k\}$ denotes the normalized totally symmetric polynomial $\frac{1}{6}\mysum_{\pi \in S_3} x_{\pi(i)}x_{\pi(j)}x_{\pi(k)}$. An embedding $\mcY^-_{2}\hookrightarrow \mcY_{2}$ is provided by:
\eq{
\varphi^-(x) = x, \qquad \varphi^-(G(x)) = [J(x'),J(x'')] + \frac{1}{4} \big( [J(x),C] - x\big). \label{Y2-:emb}
}
Here, $C=ef+fe+\frac{1}{2}h^2$ is the quadratic Casimir element. The left coideal structure on $\mcY^-_{2}$ is given by
\eqa{
\Delta(x) &= x\ot1+1\ot x, \el
\Delta(G(x)) &= \varphi^-(G(x))\ot1+1\ot G(x) + [J(x)\ot 1,\Omega] + \frac{1}{4}\Big(\big[ [x\ot1,\Omega],\Omega\big] + \big[ [x'\!\ot1,\Omega],[x''\!\ot1,\Omega]\big]\Big), \nn
}
for $(x,x',x'') = \{ (h,e,f), (e,\frac{h}{2},e),(f,f,\frac{h}{2})  \}$. The counit is given by $\epsilon(x)=\epsilon(G(x))=0$. Yangian $\mcY^-_{2}$ becomes a filtered algebra if we set $\deg(x)=0$ and $\deg(G(x))=2$.
\end{defn}

Here we have chosen a slightly different presentation of $\mcY_{2}^-$ than the one given in \cite{BeCr} because ours gives a more elegant form to the closure relation \eqref{Y2-:GGG}. The isomorphism between the two presentations is given by the map $x\mapsto x$, $G(x) \mapsto K(x) -\frac{1}{4}\,x$.


\subsection{Isomorphism $Y(\mfsp_2)\cong \mcY_2$}

We will use another set of generators of $Y(\mfsp_2)$ to show the isomorphism $Y(\mfsp_2)\cong\mcY_{2}$. The Gaussian decomposition of $T(u)\in \End(\C^2) \ot X(\mfsp_2)[[u^{-1}]]$ is given by 
\eqa{
T(u) = \left(\begin{array}{cc}
1 & 0 \\
\!f(u)\! & 1
\end{array}\right) \!\left(\begin{array}{cc}
\!k_{-1}(u)\! & 0 \\
0 & \!k_1(u)\!
\end{array}\right) \!\left(\begin{array}{cc}
1 & \!e(u)\! \\
0 & 1
\end{array}\right) = \left(\begin{array}{cc}
k_{-1}(u) & k_{-1}(u)\,e(u) \\
\!f(u)\,k_{-1}(u) & \!k_{1}(u)+f(u)\,k_{-1}(u)\,e(u)\!
\end{array}\right) , \label{Ysp2:GD}
}
where the elements $k_{\pm1}(u)$ are invertible. The transposed matrix has the form
\eqa{
T(u)^t = \left(\begin{array}{cc}
k_{1}(u)+f(u)\,k_{-1}(u)\,e(u) & -k_{-1}(u)\,e(u) \\
-f(u)\,k_{-1}(u) & k_{-1}(u)
\end{array}\right) \label{Ysp2:GDt}
}
and a simple calculation gives
\eqa{
T(u)^{-1} = \left(\begin{array}{cc}
k^{-1}_{-1}(u) + e(u)\,k_1^{-1}(u)\,f(u) & -e(u)\,k^{-1}_1(u) \\
-k^{-1}_1(u)\,f(u) & k^{-1}_{1}(u)
\end{array}\right) . \label{Ysp2:inv}
}
Set $k(u)=k_{-1}^{-1}(u) k_1(u)$.

\begin{prop} \label{P:71}
In $X(\mfsp_2)$, we have $[k(u),k(v)] = 0 = [k_1(u),k_1(v)] = [k_{-1}(u),k_{-1}(v)]$ and 
\eqa{
& [k_{-1}(u),f(v)] = \frac{2\left(f(u)-f(v)\right)k_{-1}(u)}{u-v} , \quad [k_{-1}(u),e(v)] = -\frac{2\,k_{-1}(u)\left(e(u)-e(v)\right)}{u-v} , \label{Ysp2:1} \\
& [k_{1}(u),f(v)] = -\frac{2\left(f(u)-f(v)\right)k_{1}(u)}{u-v} , \quad [k_{1}(u),e(v)] = \frac{2\,k_{1}(u)\left(e(u)-e(v)\right)}{u-v} , \label{Ysp2:2} \\
& [k(u) , f(v)] =  - \frac{2}{u-v}\big( (f(u) - f(v))\, k(u) + k(u)\,(f(u) - f(v))\big), \label{Ysp2:3} \\
&  [k(u) , e(v)] =  \frac{2}{u-v}\big( (e(u) - e(v))\, k(u) + k(u)\,(e(u) - e(v))\big), \label{Ysp2:4} \\
& [f(u),f(v)] = -\frac{2\left(f(u)-f(v)\right)^2}{u-v}, \quad [e(u),e(v)] = \frac{2\left(e(u)-e(v)\right)^2}{u-v}, \quad [e(u),f(v)] = \frac{2\left(k(u)-k(v)\right)}{u-v}. \label{Ysp2:5} 
}
\end{prop}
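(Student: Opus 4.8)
The plan is to reduce the defining $RTT$ relation of $X(\mfsp_2)$ to the $\mfgl_2$-type relation and then extract the Gaussian-coordinate commutators in the standard way. First I would invoke the identity $R(u)=\tfrac{u-1}{u-2}\bigl(I-\tfrac{2P}{u}\bigr)$ from \eqref{sp-gl}: substituting this into \eqref{RTT} and cancelling the common scalar factor $\tfrac{(u-v)-1}{(u-v)-2}$ from both sides turns the relation into $\bigl(I-\tfrac{2P}{u-v}\bigr)T_1(u)T_2(v)=T_2(v)T_1(u)\bigl(I-\tfrac{2P}{u-v}\bigr)$, which is equivalent to the componentwise relations
\[
[t_{ij}(u),t_{kl}(v)]=\frac{2}{u-v}\bigl(t_{kj}(u)\,t_{il}(v)-t_{kj}(v)\,t_{il}(u)\bigr),\qquad i,j,k,l\in\{-1,1\}.
\]
This is precisely the $Y(2)$ relation with spectral parameter rescaled by $\tfrac12$ (compatibly with $\psi_1$ of \eqref{X->Y:1}), and the overall factor $2$ is the source of every factor of $2$ appearing in the proposition.

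Next I would feed the Gaussian decomposition \eqref{Ysp2:GD} into these relations, using $k_{-1}(u)=t_{-1,-1}(u)$, $e(u)=k_{-1}(u)^{-1}t_{-1,1}(u)$, $f(u)=t_{1,-1}(u)\,k_{-1}(u)^{-1}$ and the quasideterminant $k_1(u)=t_{11}(u)-t_{1,-1}(u)\,t_{-1,-1}(u)^{-1}t_{-1,1}(u)$. I would proceed in the usual order. Taking $(i,j,k,l)=(-1,-1,-1,-1)$ forces $[k_{-1}(u),k_{-1}(v)]=0$; the mixed components $(-1,-1,-1,1)$ and $(-1,1,-1,-1)$ then yield \eqref{Ysp2:1} after rearranging with $[k_{-1}(u),k_{-1}(v)]=0$ and cancelling a common factor $k_{-1}(v)$. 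The analogous computations with the index $1$, together with the extraction of $k_1(u)$, give $[k_1(u),k_1(v)]=0$, $[k_{-1}(u),k_1(v)]=0$ (hence $[k(u),k(v)]=0$) and \eqref{Ysp2:2}, while the purely off-diagonal components $(-1,1,-1,1)$ and $(1,-1,1,-1)$ produce the quadratic relations for $[e(u),e(v)]$ and $[f(u),f(v)]$ in \eqref{Ysp2:5}.

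The relations involving $k(u)=k_{-1}(u)^{-1}k_1(u)$ and the cross relation $[e(u),f(v)]$ are where the real work lies. For \eqref{Ysp2:3}--\eqref{Ysp2:4} I would start from \eqref{Ysp2:1}--\eqref{Ysp2:2}, compute $[k(u),f(v)]$ by the Leibniz rule together with $[k_{-1}(u)^{-1},f(v)]=-k_{-1}(u)^{-1}[k_{-1}(u),f(v)]k_{-1}(u)^{-1}$, and then rewrite the resulting expression $-\tfrac{4}{u-v}k_{-1}(u)^{-1}(f(u)-f(v))k_1(u)$ in the symmetric form stated, which requires commuting $f(v)$ past $k_{-1}(u)^{-1}$ and $k_1(u)$ using the relations already obtained. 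The cross relation $[e(u),f(v)]=\tfrac{2}{u-v}(k(u)-k(v))$ is the main obstacle: it originates from the $(-1,1,1,-1)$ component, $[t_{-1,1}(u),t_{1,-1}(v)]=\tfrac{2}{u-v}\bigl(t_{11}(u)t_{-1,-1}(v)-t_{11}(v)t_{-1,-1}(u)\bigr)$, and turning the right-hand side into $k(u)-k(v)$ forces one to re-express $t_{11}$ through $k_1$ and $f\,k_{-1}\,e$ and to cancel the quadratic terms carefully; the noncommutativity of the factors $k_{-1}(u)^{-1}$ with $e,f$ at shifted arguments is the chief source of difficulty throughout. As an independent check one may instead transport the classical Gauss-decomposition relations for $Y(\mfgl_2)$ through $\psi_1$, the $u\mapsto u/2$ rescaling once more accounting for the factors of $2$.
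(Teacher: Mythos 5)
Your proposal is correct and is essentially the paper's own approach: the paper's proof is a one-line reference to the analogous Gauss-decomposition calculations in \cite{Mo4} and \cite{JiLi}, and your reduction of the $X(\mfsp_2)$ RTT relation to the $\mfgl_2$-type relation via \eqref{sp-gl} (equivalently, transport through $\psi_1$ of \eqref{X->Y:1}) is exactly what makes those known $Y(2)$ calculations apply, the rescaling $u\mapsto u/2$ accounting for every factor of $2$. The paper itself confirms this viewpoint immediately afterwards, in the proof of Proposition \ref{P:72}, where relations \eqref{Ysp2:3}--\eqref{Ysp2:5} are identified with Drinfeld's second realization of $Y(\mfsl_2)$ under $e(2u)$, $f(2u)$, $k(2u)$.
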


\begin{proof}
The proof is based on calculations similar to those that can be found in \cite[Section 3.1]{Mo4} or~\cite{JiLi}.\qedhere
\end{proof}

Let us expand the series $e(u),f(u)$ and $k(u)$ in the following way:
\eq{ \label{Ysp2:exp}
f(u) = \mysum_{r\geq0} f^{(r)}  u^{-r-1}, \qq
k(u) = 1 + \mysum_{r\geq0} k^{(r)} u^{-r-1}, \qq
e(u) = \mysum_{r\geq0} e^{(r)} u^{-r-1}.
}

Passing to the quotient $X(\mfsp_2) \onto Y(\mfsp_2)$ ($T(u) \mapsto \mcT(u)$) where $\mcT(u+2)^t = \mcT^{-1}(u)$ and keeping the same notation for the new generators leads to $k_{-1}(u+2) = k_1^{-1}(u)$ and $k(u) = k_1(u-2) k_1(u) = k_{-1}^{-1}(u) k_{-1}^{-1}(u+2)$ in $Y(\mfsp_2)$ (compare \eqref{Ysp2:GDt} and \eqref{Ysp2:inv}).

\begin{prop} \label{P:72}
The map $\Phi :  \mcY_{2} \to Y(\mfsp_2) $ given by
\eq{ \label{Ysp2:iso}
\Phi \;:\; \begin{cases}
h \mapsto \frac{1}{2} k^{(0)} , \qq J(h) \mapsto \frac{1}{4} \left(k^{(1)} - \frac{1}{2}(k^{(0)})^2 + \frac{1}{2}(e^{(0)}f^{(0)}+f^{(0)}e^{(0)}) \right)  , \\[.2em]
\hspace{0.5mm} e \mapsto \frac{1}{2} f^{(0)} , \qq J(e) \mapsto  \frac{1}{4} \left(f^{(1)} - \frac{1}{4}(f^{(0)}k^{(0)} + k^{(0)} f^{(0)})\right), \\[.2em]
f \mapsto \frac{1}{2} e^{(0)} , \qq J(f) \mapsto  \frac{1}{4} \left(e^{(1)} - \frac{1}{4}(e^{(0)}k^{(0)} + k^{(0)} e^{(0)})\right)
\end{cases}
}
is an algebra isomorphism.
\end{prop}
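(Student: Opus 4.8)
The plan is to prove two separate things: that $\Phi$ is a well-defined algebra homomorphism, and that it is bijective. Throughout I would pass from the series identities of Proposition \ref{P:71} to relations among the coefficients $k^{(r)},e^{(r)},f^{(r)}$, by expanding \eqref{Ysp2:1}--\eqref{Ysp2:5} in powers of $u^{-1}$ and $v^{-1}$ and using $\frac{a(u)-a(v)}{u-v}=-\sum_{p,q\ge0}a^{(p+q)}\,u^{-p-1}v^{-q-1}$, valid for any series $a(u)=\sum_{r\ge0}a^{(r)}u^{-r-1}$. For instance the last identity in \eqref{Ysp2:5} yields $[e^{(p)},f^{(q)}]=-2k^{(p+q)}$, while \eqref{Ysp2:3}--\eqref{Ysp2:4} give $[k^{(0)},f^{(0)}]=4f^{(0)}$ and $[k^{(0)},e^{(0)}]=-4e^{(0)}$. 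These low-order relations are exactly what the verification needs.

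For the homomorphism property I would check the defining relations \eqref{Y2:Lie}, \eqref{Y2:LJ}, \eqref{Y2:JJJ} for the images, in increasing order of difficulty. The $\mfsl_2$-relations \eqref{Y2:Lie} are immediate: writing $H=\tfrac12k^{(0)}$, $E=\tfrac12f^{(0)}$, $F=\tfrac12e^{(0)}$, the three relations just quoted give $[H,E]=2E$, $[H,F]=-2F$ and $[E,F]=H$. The mixed relations \eqref{Y2:LJ} need only the next coefficient of \eqref{Ysp2:1}--\eqref{Ysp2:5} (those producing $k^{(1)},e^{(1)},f^{(1)}$ together with their products with $k^{(0)},e^{(0)},f^{(0)}$), after which each identity collapses to a short commutator computation; this is where the quadratic correction terms in $\Phi(J(h)),\Phi(J(e)),\Phi(J(f))$ first enter and must be tracked carefully.

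The main obstacle is the cubic closure relation \eqref{Y2:JJJ}, namely $[J(h),[J(e),J(f)]]=(J(f)\,e-fJ(e))\,h$. Its verification is a genuinely lengthy computation: I would first assemble all coefficient relations through order two coming from Proposition \ref{P:71} (in particular $[e^{(1)},f^{(1)}]=-2k^{(2)}$ and the quadratic identities arising from $[f(u),f(v)]$ and $[e(u),e(v)]$), then substitute the explicit images of $J(h),J(e),J(f)$ from \eqref{Ysp2:iso} and simplify. The precise shape of the corrections $-\tfrac12(k^{(0)})^2+\tfrac12(e^{(0)}f^{(0)}+f^{(0)}e^{(0)})$ in $\Phi(J(h))$ and $-\tfrac14(x^{(0)}k^{(0)}+k^{(0)}x^{(0)})$ in $\Phi(J(e)),\Phi(J(f))$ is engineered so that the lower-order terms generated by the triple commutator cancel exactly against those produced by $(J(f)\,e-fJ(e))\,h$; checking this cancellation is where essentially all of the work resides, and it is the step most likely to require patience with signs and symmetrizations.

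Finally, for bijectivity I would use the associated graded with respect to the filtrations $\deg(x)=0,\ \deg(J(x))=1$ on $\mcY_2$ and $\deg k^{(r)}=\deg e^{(r)}=\deg f^{(r)}=r$ on $Y(\mfsp_2)$. The formulas \eqref{Ysp2:iso} show $\Phi$ is filtered, so it induces $\gr\Phi\colon\gr\mcY_2\to\gr Y(\mfsp_2)$. By the standard structure theorems, $\gr\mcY_2\cong U(\mfsl_2[t])$ and $\gr Y(\mfsp_2)\cong U(\mfsp_2[t])$, and on the generating Lie subalgebras $\gr\Phi$ sends the degree-$0$ and degree-$1$ symbols $\bar h,\bar e,\bar f,\overline{J(h)},\overline{J(e)},\overline{J(f)}$ to the nonzero multiples $\tfrac12\bar k^{(0)},\tfrac12\bar f^{(0)},\tfrac12\bar e^{(0)},\tfrac14\bar k^{(1)},\tfrac14\bar f^{(1)},\tfrac14\bar e^{(1)}$ of the corresponding generators of $\mfsp_2[t]$. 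Since these slices generate the two current algebras and $\gr\Phi$ preserves the grading, $\gr\Phi$ restricts to a degree-preserving surjection $\mfsl_2[t]\to\mfsp_2[t]$ which is bijective in each loop degree (each of dimension three), hence a Lie algebra isomorphism; therefore $\gr\Phi$ is an algebra isomorphism, and so is $\Phi$.
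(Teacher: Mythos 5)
Your proposal is correct, but it takes a genuinely different route from the paper. The paper's proof is a two-line reduction to known results: it observes that relations \eqref{Ysp2:3}--\eqref{Ysp2:5} of Proposition \ref{P:71} are precisely the defining relations of Drinfeld's second realization of $Y(\mfsl_2)$ under the identification $X^-(u)=e(2u)$, $X^+(u)=f(2u)$, $H(u)=k(2u)$, and that \eqref{Ysp2:iso} is exactly Drinfeld's formula from \cite{Dr3} for the isomorphism between the first and second realizations, so the homomorphism property is inherited from \emph{loc.\ cit.}; bijectivity is then deduced from the Poincar\'e--Birkhoff--Witt theorem for $Y(\mfsp_2)$ of \cite{AMR}. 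You instead verify the defining relations \eqref{Y2:Lie}, \eqref{Y2:LJ}, \eqref{Y2:JJJ} directly from the coefficient form of Proposition \ref{P:71} (your expansion $\frac{a(u)-a(v)}{u-v}=-\sum_{p,q\ge 0}a^{(p+q)}u^{-p-1}v^{-q-1}$ and the low-order relations $[e^{(p)},f^{(q)}]=-2k^{(p+q)}$, $[k^{(0)},f^{(0)}]=4f^{(0)}$, $[k^{(0)},e^{(0)}]=-4e^{(0)}$ are all correct, including the swap $e\leftrightarrow f$ in the images), and you obtain bijectivity from the associated graded, which is a sound and more explicit substitute for the paper's appeal to PBW. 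Two caveats on cost. First, the verification of the cubic relation \eqref{Y2:JJJ} that you defer is exactly the lengthy computation that the citation of \cite{Dr3} packages away, so your route trades a reference for a substantial calculation (one known to succeed, since these are Drinfeld's formulas). Second, your identification $\gr\,\mcY_{2}\cong U(\mfsl_2[t])$ is itself a nontrivial theorem of Drinfeld on flatness of the Yangian as a deformation, so the bijectivity half of your argument still rests on a result of the same depth as what the paper cites; moreover, the PBW theorem of \cite{AMR} is stated for the RTT generators with $\deg\tau_{ij}^{(r)}=r-1$, so to get $\gr\,Y(\mfsp_2)\cong U(\mfsp_2[t])$ for your filtration $\deg k^{(r)}=\deg e^{(r)}=\deg f^{(r)}=r$ you should add the remark that each Gaussian coefficient differs from the corresponding $\tau_{ij}^{(r+1)}$ by products of strictly lower filtered degree, so the two filtrations coincide and the symbols match.
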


\begin{proof}
Relations \eqref{Ysp2:3}-\eqref{Ysp2:5} in Proposition \ref{P:71} are those provided by Drinfeld's second realization of $Y(\mfsl_2)$ \cite{Dr3} via the identification $X^-(u) = e(2u)$, $X^+(v) = f(2v)$ and $H(u) = k(2u)$. The formula for $\Phi$ is the same as the one given in \textit{loc.\ cit.} for the isomorphism between the first and second realizations (that is, between $\mcY_{2}$ and $Y(\mfsl_2)$): it follows that $\Phi$ is a homomorphism. That it is an isomorphism is a consequence of the Poincar\'e-Birkhoff-Witt Theorem for $Y(\mfsp_2)$ \cite{AMR}. 
\end{proof}

We will further make use of the following identities:
\spl{ \label{k-k}
2k^{(0)}_{-1} &= -k^{(0)}, \qq 2k^{(1)}_{-1}=-k^{(1)}- k^{(0)} +\tfrac{3}{4} (k^{(0)})^2, \\
2k^{(2)}_{-1} &= -k^{(2)} -2k^{(1)} + \tfrac{3}{2} k^{(1)} k^{(0)} + \tfrac{3}{2} (k^{(0)})^2 - \tfrac{5}{8}  (k^{(0)})^3  ,
}
which follow by equating coefficients of $u^{-r}$ with $r=1,2,3$ on both sides of $k(u) = k_{-1}^{-1}(u) k_{-1}^{-1}(u+2)$.


\subsection{Isomorphism $Y(\mfsp_2,\mfgl_1)^{tw}\cong\mcY^+_{2}$}

We will show the isomorphism $Y(\mfsp_2,\mfgl_1)^{tw}\cong\mcY^+_{2}$ (here $\mcG = {\rm diag}(1,-1)$) by identifying these as subalgebras of the Yangian of $\mfsl_2$.  Recall that the power series of generators of $Y(\mfsp_2,\mfgl_1)^{tw}$ are denoted by $\si_{ij}(u)$ and they are the entries of the matrix $\Si(u)$. The Gaussian decomposition $\Si(u)=\msF(u)\,\msK(u)\,\msE(u)$ of $\Si(u)$ can be expressed as in  \eqref{Ysp2:GD}. Its transpose and inverse are of the same form as \eqref{Ysp2:GDt} and \eqref{Ysp2:inv}, respectively. The symmetry relation \eqref{RES} 
$$
\Si(u)^t = - \Si(2-u) - \frac{\Si(u)-\Si(2-u)}{2u-2} - \frac{\Tr(\Si(u))\cdot I}{2u-4} 
$$
implies 
\eqa{
\msf(u)\,\msk_{-1}&(u) = \msf(2-u)\,\msk_{-1}(2-u) , \qquad \msk_{-1}(u)\,\mse(u) =  \msk_{-1}(2-u)\,\mse(2-u) \el
& \msk_1(u) + \msf(u)\,\msk_{-1}(u)\,\mse(u) = -\frac{\msk_{-1}(u) + (u-2)\, \msk_{-1}(2-u)}{u-1} . \nn
}
Therefore,
\eq{
\Si(u) = \left(\begin{array}{cc}
\msk_{-1}(u) & \msk_{-1}(u)\,\mse(u) \\
\msf(u)\,\msk_{-1}(u) & \frac{1}{1-u}\big({\msk_{-1}(u)+(2-u)\,\msk_{-1}(2-u)}\big)
\end{array}\right) . \label{Ysp2u1:S(u)}
}
We rewrite the relation $\Si(u)\,\Si(-u)=I$ as $\Si(u)=\Si^{-1}(-u)$, giving
$\msk^{-1}_1(-u)=\msk_1(u)+\msf(u)\, \msk_{-1}(u)\, \mse(u)$. Then, by combining the relations above, we find
$$
\msk^{-1}_1(u) = \frac{\msk_{-1}(-u) - \left(u+2\right) \msk_{-1}(u+2)}{u+1},
$$
hence we need to consider only the power series of generators $\mse(u)$, $\msf(u)$ and $\msk_{-1}(u)$. We set $\msk(u)=\msk_{-1}(u)$ and use the same series expansion as in \eqref{Ysp2:exp}:
\eq{ \label{Ysp2u1:exp}
\msf(u) = \mysum_{r\geq0} \msf^{(r)}  u^{-r-1}, \qquad
\msk(u) = 1+ \mysum_{r\geq0} \msk^{(r)}  u^{-r-1}, \qquad
\mse(u) = \mysum_{r\geq0} \mse^{(r)}  u^{-r-1}.
}

\begin{prop} \label{P:75}
The map $\Phi^+ : \mcY^+_{2}  \to Y(\mfsp_2,\mfgl_1)^{tw}$ given by
\eq{ \label{Ysp2u1:iso} 
\Phi^+ \;:\;  k \mapsto -\tfrac{1}{2} \msk^{(0)} , \quad  F \mapsto -\tfrac{1}{8} \mse^{(1)}  ,  \quad  E \mapsto \tfrac{1}{8} \msf^{(1)}   
}
is an algebra isomorphism.
\end{prop}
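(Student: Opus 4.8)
The plan is to deduce the proposition from the isomorphism $\Phi\colon\mcY_2\to Y(\mfsp_2)$ of Proposition~\ref{P:72} together with the embedding $\varphi^+\colon\mcY^+_2\hookrightarrow\mcY_2$ of \eqref{Y2+:emb}. Both the source and the target of $\Phi^+$ are naturally subalgebras of the untwisted objects: $\mcY^+_2$ sits inside $\mcY_2$ via $\varphi^+$, while $Y(\mfsp_2,\mfgl_1)^{tw}$ sits inside $Y(\mfsp_2)$ via the inclusion $\iota$ coming from \eqref{S=TGT}. The strategy is to show that $\Phi$ identifies these two subalgebras, i.e.\ that the composite embedding $\Phi\circ\varphi^+\colon\mcY^+_2\to Y(\mfsp_2)$ takes values in $\iota\big(Y(\mfsp_2,\mfgl_1)^{tw}\big)$ and that its corestriction is exactly the map $\Phi^+$ of \eqref{Ysp2u1:iso}. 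Once this is established, $\Phi^+$ is automatically an algebra homomorphism, since $\mcY^+_2$ is generated by $k,E,F$ and their images lie in the subalgebra $Y(\mfsp_2,\mfgl_1)^{tw}$, and it is injective because $\Phi$ and $\varphi^+$ are.

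The technical heart is to make $\iota$ explicit in Gaussian coordinates. Writing $\Si(u)=\mcT(u-1)\,\mcG\,\mcT(-u+1)^{t_-}$ with $\mcG=\mathrm{diag}(1,-1)$, and substituting the Gaussian decomposition \eqref{Ysp2:GD}--\eqref{Ysp2:inv} of $\mcT(u)$ into the entries recorded in \eqref{Ysp2u1:S(u)}, one expands in $u^{-1}$ and reads off the generators $\msk^{(0)},\mse^{(1)},\msf^{(1)}$ of \eqref{Ysp2u1:exp} in terms of the coefficients $k^{(r)},e^{(r)},f^{(r)}$ of \eqref{Ysp2:exp}. At order $u^{-1}$ the $(-1,-1)$ entry yields $\iota(\msk^{(0)})=2k^{(0)}_{-1}=-k^{(0)}$ by \eqref{k-k}; hence $\iota(\Phi^+(k))=-\tfrac12\iota(\msk^{(0)})=\tfrac12 k^{(0)}=\Phi(h)=\Phi(\varphi^+(k))$. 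Next I would push the expansion of the off-diagonal entries $\si_{1,-1}(u)=\msf(u)\,\msk(u)$ and $\si_{-1,1}(u)=\msk(u)\,\mse(u)$ to order $u^{-2}$, invert $\msk(u)$, and extract $\iota(\msf^{(1)})$ and $\iota(\mse^{(1)})$. Comparing these with the images $\Phi(\varphi^+(E))$ and $\Phi(\varphi^+(F))$, computed directly from \eqref{Ysp2:iso} and \eqref{Y2+:emb}, then verifies $\iota(\Phi^+(E))=\Phi(\varphi^+(E))$ and $\iota(\Phi^+(F))=\Phi(\varphi^+(F))$. This establishes the commutative square on generators, so that $\Phi^+$ is a well-defined injective homomorphism given by \eqref{Ysp2u1:iso}.

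For surjectivity I would pass to associated graded algebras with respect to the filtrations $\deg k=0$, $\deg E=\deg F=1$ on $\mcY^+_2$ and $\deg\si^{(r)}_{ij}=r-1$ on $Y(\mfsp_2,\mfgl_1)^{tw}$; with these choices $\Phi^+$ is filtered, since $\msk^{(0)},\msf^{(1)},\mse^{(1)}$ have degrees $0,1,1$. By \eqref{iso-sp-4} we have $Y(\mfsp_2,\mfgl_1)^{tw}\cong\mcS\mcB(2,1)$, whose associated graded is the twisted current algebra $U(\mfsl_2[x]^\rho)$; under the analogue of \eqref{Y->U}, the symbols of $\msk^{(0)}$ and of $\mse^{(1)},\msf^{(1)}$ are nonzero multiples of the Cartan element $h$ in degree $0$ and of the root vectors $ex,fx$ in degree $1$. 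Since $\mfsl_2[x]^\rho$ is generated as a Lie algebra by its degree $0$ and degree $1$ homogeneous components $\langle h\rangle$ and $\langle ex,fx\rangle$, these three symbols generate $\gr\,Y(\mfsp_2,\mfgl_1)^{tw}$; thus $\gr\,\Phi^+$ is surjective, and therefore so is $\Phi^+$. Combined with injectivity, this yields the isomorphism, in accordance with the isomorphism of symmetric pairs \eqref{iso:1}.

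The main obstacle is the order-$u^{-2}$ computation in the second step: extracting $\msf^{(1)}$ and $\mse^{(1)}$ from the nonlinear Gaussian expression for $\Si(u)$ requires careful bookkeeping of the two spectral shifts $u\mapsto u-1$ and $u\mapsto-u+1$, of the noncommutativity of $k^{(0)}$ with $e^{(0)},f^{(0)}$ (so that the quadratic corrections appear in symmetrized form), and of the relations \eqref{k-k} relating $k^{(r)}_{\pm1}$ to $k^{(r)}$. Everything else is formal once the square $\iota\circ\Phi^+=\Phi\circ\varphi^+$ has been checked on the three generators.
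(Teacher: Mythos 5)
Your overall strategy---making $\iota$ explicit in Gaussian coordinates and comparing with the composite through $\mcY_2$---is the same as the paper's, and your associated-graded argument for surjectivity is a legitimate (in fact more explicit) substitute for the generation statement that the paper leaves implicit. However, there is a genuine error at the heart of your argument: the square you propose to verify, $\iota\circ\Phi^+=\Phi\circ\varphi^+$, does \emph{not} commute. The spectral shift $u\mapsto u-1$ in the definition $\Si(u)=\mcT(u-1)\,\mcG\,\mcT(1-u)^t$ produces, after transporting through Drinfeld's isomorphism $\Phi$, the automorphism $\omega$ of $\mcY_2$ given by $\omega(x)=x$, $\omega(J(x))=J(x)+\tfrac12x$, and the correct identity is $\iota\circ\Phi^+=\Phi\circ\omega\circ\varphi^+$. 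Concretely, carrying out the order-$u^{-2}$ expansion you outline gives $\iota(\mse^{(1)})=-2e^{(1)}-[k^{(0)}_{-1},e^{(0)}]$ and $\iota(\msf^{(1)})=2f^{(1)}+k^{(0)}_{-1}f^{(0)}+3f^{(0)}k^{(0)}_{-1}$, from which one computes
\eqn{
\Phi^{-1}\big(\iota(\Phi^+(F))\big)=J(f)+\tfrac14(fh+hf)+\tfrac12 f=\omega(\varphi^+(F)), \qq
\Phi^{-1}\big(\iota(\Phi^+(E))\big)=J(e)-\tfrac12\,eh=\omega(\varphi^+(E)),
}
which differ from $\varphi^+(F)=J(f)+\tfrac14(fh+hf)$ and $\varphi^+(E)=J(e)-\tfrac14(eh+he)$ by $\tfrac12 f$ and $\tfrac12 e$ respectively. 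So the verification described in your second paragraph would fail, and with it the claimed well-definedness and injectivity of $\Phi^+$.

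This is not a cosmetic mismatch that a rearrangement removes: the two subalgebras you want $\Phi$ to identify are genuinely different, since $\iota\big(Y(\mfsp_2,\mfgl_1)^{tw}\big)=\Phi\big(\omega(\varphi^+(\mcY^+_2))\big)$, and if this coincided with $\Phi\big(\varphi^+(\mcY^+_2)\big)$ then $f=2\big(\omega(\varphi^+(F))-\varphi^+(F)\big)$ would lie in $\varphi^+(\mcY^+_2)$, which is impossible (by the PBW filtration, the degree-zero part of $\varphi^+(\mcY^+_2)$ is $\C[h]$, while at the graded level $\mfp=\C e\oplus\C f$ only occurs in odd $x$-degree). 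The repair is exactly what the paper does: replace $\varphi^+$ by $\omega\circ\varphi^+$ throughout. Since $\omega$ is an automorphism of $\mcY_2$, the composite $\Phi\circ\omega\circ\varphi^+$ is still an injective homomorphism, the two subalgebras then coincide on generators, and the remainder of your argument---including the surjectivity step via $\gr$, which differs from the paper's implicit appeal to generation of $Y(\mfsp_2,\mfgl_1)^{tw}$ by $\msk^{(0)},\mse^{(1)},\msf^{(1)}$---goes through unchanged.
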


\begin{proof}
The embedding $\iota: Y(\mfsp_2,\mfgl_1)^{tw} \into Y(\mfsp_2)$ given by $\Si(u) \mapsto \mcT(u-1)\,\mcG\,\mcT(1-u)^t$, where $\mcG={\rm diag}(1,-1)$, can be made more explicit using \eqref{Ysp2u1:S(u)}:
\eqn{
\msk(u+1) &\mapsto k_{-1}(u)\,e(u)\,f(-u)\,k_{-1}(-u)+k_{-1}(u)\,f(-u)\,k_{-1}(-u)\,e(-u)+k_{-1}(u)\,k_{-1}^{-1}(2-u), \el
\msf(u+1)\, \msk(u+1) &\mapsto  f(u)\,k_{-1}(u)\,e(u)\,f(-u)\,k_{-1}(-u)+f(u)\,k_{-1}(u)\,f(-u)\,k_{-1}(-u)\,e(-u) \el
& \hspace{3.5cm} + f(u)\,k_{-1}(u)\,k_{-1}^{-1}(2-u)+k_{-1}^{-1}(u+2)\,f(-u)\,k_{-1}(-u) , \el
\msk(u+1)\,\mse(u+1) &\mapsto -k_{-1}(u)\,e(u)\,k_{-1}(-u)-k_{-1}(u)\,k_{-1}(-u)\,e(-u).
}
In particular, 
$$
\msk^{(0)} \mapsto 2k_{-1}^{(0)}, \qquad \msf^{(1)} \mapsto  2f^{(1)} + k_{-1}^{(0)} f^{(0)}+3f^{(0)}k_{-1}^{(0)}, \qquad \mse^{(1)} \mapsto -2e^{(1)} - (k^{(0)}_{-1}e^{(0)}-e^{(0)}k^{(0)}_{-1}).
$$
and $\msf^{(0)}\mapsto 0$, $\mse^{(0)}\mapsto 0$, $\msk^{(1)}\mapsto 2 (k^{(0)}_{-1}k^{(0)}_{-1}+2k^{(0)}_{-1})$. By \eqref{k-k} we can substitute $k^{(0)}_{-1}$ with $-\frac{1}{2}k^{(0)}$. Then, using $[k^{(0)},f^{(0)}]=4f^{(0)}$ and $[k^{(0)},e^{(0)}]=-4e^{(0)}$, which follow from \eqref{Ysp2:iso}, we obtain
$$
\msk^{(0)} \mapsto -k^{(0)}, \qquad \msf^{(1)} \mapsto  2f^{(1)} - ( k^{(0)} f^{(0)}+f^{(0)}k^{(0)}) + 2 f^{(0)}, \qquad \mse^{(1)} \mapsto -2e^{(1)} - 2e^{(0)}k^{(0)}.
$$
The composite of the embedding $\iota$ with the isomorphism $\Phi^{-1}: Y(\mfsp_2) \iso \mcY_{2}$ given in Proposition \ref{P:72} sends $\msk^{(0)}, \mse^{(1)}$ and $\msf^{(1)}$ to $-2h, -8J(f)-2(fh+hf) - 4 f$ and $8J(e) - 2(eh+he) + 4 e$ respectively. These elements of $\mcY_{2}$ are the images of $-2k,-8F$ and $8E$ under the composite of the embedding $\varphi^+:\mcY_{2}^+ \into \mcY_{2}$ given in \eqref{Y2+:emb} with the automorphism $\omega$ of $\mcY_{2}$ given by $\omega(x)=x, \, \omega(J(x)) = J(x) + \frac{1}{2} x$ for $x=e,f,h$. It follows that $\mcY^+_{2}$ is isomorphic to $Y(\mfsp_2,\mfgl_1)^{tw}$, that $\Phi^+$ is an algebra isomorphism and that the following diagram is commutative:
$$
\xymatrix{ 
& \mcY^+_{2} \ar[d]^{\Phi^+} \ar[rr]^{\varphi^+} & & \mcY_{2} \ar[d]_{\omega} \\ 
& Y(\mfsp_2,\mfgl_1)^{tw} \ar[r]^{\iota}  &  Y(\mfsp_2) \ar[r]^{\Phi^{-1}} & \mcY_{2} \\
} \qq
$$ 
\end{proof}

\subsection{Isomorphism $Y(\mfsp_2,\mfsp_2)^{tw}\cong\mcY^-_{2}$}

We will show the isomorphism $Y(\mfsp_2,\mfsp_2)^{tw}\cong\mcY^-_{2}$ (here $\mcG$ is the $2\times 2$ identity matrix) in essentially the same way as in the section above. The Gaussian decomposition of $\Si(u)=\msF(u)\,\msK(u)\,\msE(u)\in \End(\C^2) \ot Y(\mfsp_2,\mfsp_2)^{tw}[[u^{-1}]]$ is of the same form as before, only this time $\si^{(0)}_{ij}=\delta_{ij}$. The symmetry relation \eqref{RES} 
$$
\Si(u)^t = \Si(2-u) - \frac{\Si(u)-\Si(2-u)}{2u-2} + \frac{2\,\Si(2-u)-\Tr(\Si(u))\cdot I}{2u-4} 
$$
implies 
\eqn{
\msk_{-1}(u)\,\mse(u) =  \frac{u}{2-u}&\,\msk_{-1}(2-u)\,\mse(2-u) , \qq \msf(u)\,\msk_{-1}(u) = \frac{u}{2-u}\,\msf(2-u)\,\msk_{-1}(2-u) , \\
& \msk_1(u) + \msf(u)\,\msk_{-1}(u)\,\mse(u) = \frac{1}{u-1}  ( \msk_{-1}(u) - u\, \msk_{-1}(2-u)). 
}
We thus obtain
\eq{
\Si(u) = \left(\begin{array}{cc}
\msk_{-1}(u) & \msk_{-1}(u)\,\mse(u) \\
\msf(u)\,\msk_{-1}(u) & \frac{1}{1-u}\big({\msk_{-1}(u)-u\,\msk_{-1}(2-u)}\big)
\end{array}\right) . \label{Ysp2sp2:S(u)}
}
The unitarity relation gives $\msf(u)\, \msk_{-1}(u)\, \mse(u)=\msk^{-1}_1(-u)-\msk_1(u)$. By combining the relations above we find $\msk^{-1}_1(u) = -\tfrac{1}{u+1} (\msk_{-1}(-u)+ u\,\msk_{-1}(u+2))$. As previously, we need to consider only the elements $\mse(u)$, $\msf(u)$ and $\msk_{-1}(u)$. We set $\msk(u)=\msk_{-1}(u)$ and use the series expansion \eqref{Ysp2u1:exp}.

\begin{prop} \label{P:76}
The map $\Phi^- :\mcY^-_{2} \to  Y(\mfsp_2,\mfsp_2)^{tw}$ given by
\eq{ \label{Ysp2sp2:iso} 
\Phi^- \;:\; \begin{cases}
h \mapsto -\frac{1}{2}\msk^{(0)} , \qquad  
G(h) \mapsto  -\frac{1}{8}\msk^{(2)} - \frac{1}{8}(4\msk^{(0)} + 4\msC - \msC \msk^{(0)}) , \\[.25em]
\hspace{0.4mm} 
e \mapsto \frac{1}{4}\msf^{(0)}  , \qquad\quad  
G(e) \mapsto \frac{1}{16}\msf^{(2)} - \frac{1}{16}(4\msf^{(0)} + 4\msk^{(0)}\msf^{(0)} + (\msk^{(0)})^2\msf^{(0)} - \msC \msf^{(0)}), \\[.25em]
f \mapsto \frac{1}{4}\mse^{(0)}, \qquad\hspace{2.8mm}  
G(f)  \mapsto \frac{1}{16}\mse^{(2)} - \frac{1}{16}(4\mse^{(0)} + 4\mse^{(0)}\msk^{(0)} + \mse^{(0)}(\msk^{(0)})^2 - \msC \mse^{(0)}) ,
\end{cases} 
}
where $\msC = \frac{1}{8}\left(\mse^{(0)}\msf^{(0)} + \msf^{(0)}\mse^{(0)} + 2(\msk^{(0)})^2\right)$ is an  algebra isomorphism.
\end{prop}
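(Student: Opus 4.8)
The plan is to mirror the proof of Proposition \ref{P:75}: I would realize both $\mcY^-_2$ and $Y(\mfsp_2,\mfsp_2)^{tw}$ as subalgebras of $\mcY_2$ through embeddings already available, and then verify that the assignment \eqref{Ysp2sp2:iso} is exactly the induced identification of generators. The first step is to make the embedding $\iota : Y(\mfsp_2,\mfsp_2)^{tw} \hookrightarrow Y(\mfsp_2)$, $\Si(u) \mapsto \mcT(u-1)\,\mcT(1-u)^t$ (the case $\mcG=I$ of \eqref{S=TGT}), explicit on matrix entries by combining the Gaussian form \eqref{Ysp2sp2:S(u)} on the left with the decompositions \eqref{Ysp2:GD}--\eqref{Ysp2:inv} of $\mcT(u)$ on the right. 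Expanding in powers of $u^{-1}$ then produces the images $\iota(\msk^{(0)}), \iota(\mse^{(0)}), \iota(\msf^{(0)})$, $\iota(\msk^{(2)}), \iota(\mse^{(2)}), \iota(\msf^{(2)})$ and $\iota(\msC)$ as explicit noncommutative polynomials in the Gaussian generators $k_{-1}^{(r)}, e^{(r)}, f^{(r)}$ of $Y(\mfsp_2)$.

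Next I would bring these polynomials to a normal form: use the identities \eqref{k-k} to eliminate the $k_{-1}^{(r)}$ in favour of the $k^{(r)}$, and the commutation relations of Proposition \ref{P:71} to order the remaining factors. Then I would apply the inverse of the isomorphism $\Phi : \mcY_2 \iso Y(\mfsp_2)$ of Proposition \ref{P:72}. Since $\Phi$ is prescribed only on the degree $\le 1$ generators $h,e,f,J(h),J(e),J(f)$, I would first express the second-order coefficients $k^{(2)}, e^{(2)}, f^{(2)}$ through the Drinfeld second-realization relations \eqref{Ysp2:3}--\eqref{Ysp2:5}, which determine them recursively as polynomials in the degree $\le 1$ generators; this is precisely the content of the first-to-second realization isomorphism underlying Proposition \ref{P:72}. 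The result is a formula for $\Phi^{-1}\circ\iota$ applied to each of the six coefficients above as an element of $\mcY_2$ that is at most quadratic in $J(h),J(e),J(f)$.

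Finally I would match these against the images of $h,e,f$ and $G(h),G(e),G(f)$ under the embedding $\varphi^- : \mcY^-_2 \hookrightarrow \mcY_2$ of \eqref{Y2-:emb}, allowing for composition with an automorphism of $\mcY_2$ of the same shape as the automorphism $\omega$ used in Proposition \ref{P:75} (fixing $h,e,f$ and shifting each $J(x)$ by a multiple of $x$). Verifying that $\Phi^{-1}\circ\iota\circ\Phi^-$ agrees on all six generators with this composite forces the two embeddings $\Phi^{-1}\circ\iota$ and $(\text{aut})\circ\varphi^-$ to have the same image in $\mcY_2$; since both are injective and $Y(\mfsp_2,\mfsp_2)^{tw}$ is generated by the coefficients occurring in \eqref{Ysp2sp2:iso}, it follows that $\Phi^-$ is the isomorphism identifying $\mcY^-_2$ with $Y(\mfsp_2,\mfsp_2)^{tw}$, which I would record in a commutative square exactly as in Proposition \ref{P:75}. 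In particular the well-definedness of $\Phi^-$ as an algebra homomorphism is automatic from this identification, since $\Phi^{-1}\circ\iota$ is injective.

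The main obstacle will be the degree-two matching in the middle step. Because the generators $G(x)$ sit in filtration degree $2$ while $\mcY_2$ is generated in degrees $0$ and $1$, the images $\varphi^-(G(x)) = [J(x'),J(x'')] + \tfrac14\big([J(x),C]-x\big)$ are genuinely quadratic in the $J$-generators, so the coefficients $\msk^{(2)}, \mse^{(2)}, \msf^{(2)}$ must be fully expanded through the second-realization relations before any comparison is possible. Keeping track of the cubic correction terms in \eqref{Ysp2sp2:iso} -- those involving the Casimir $\msC$ and $(\msk^{(0)})^2$ -- together with the ordering ambiguities from noncommutativity, is where the bulk of the computation lies and where coefficient and sign errors are most likely; everything else is a routine, if lengthy, transcription of the argument for Proposition \ref{P:75}.
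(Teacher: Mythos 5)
Your proposal is correct and follows essentially the same route as the paper: make the embedding $\iota:\, Y(\mfsp_2,\mfsp_2)^{tw} \into Y(\mfsp_2)$ explicit via the Gaussian decompositions, rewrite with \eqref{k-k} and Proposition \ref{P:71}, pull back through $\Phi^{-1}$, and match against $\omega\circ\varphi^-$ to deduce that the two embeddings into $\mcY_2$ have the same image, so that $\Phi^-=(\Phi^{-1}\circ\iota)^{-1}\circ\omega\circ\varphi^-$ is the desired isomorphism. The only (inessential) difference is one of economy: the paper exploits that $\mcY_2^-$ is generated by $h,e,f,G(h)$ and $Y(\mfsp_2,\mfsp_2)^{tw}$ by $\msk^{(0)},\mse^{(0)},\msf^{(0)},\msk^{(2)}$, so the image comparison only requires the single matching $\Phi^{-1}(\iota(\msk^{(2)}))=-8\,\omega(\varphi^-(G(h)))-4(2h-2C+Ch)$, with the $\mse^{(2)}$, $\msf^{(2)}$ computations done afterwards merely to confirm the stated formulas for $G(f)$, $G(e)$, whereas you propose to verify all six generators.
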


\begin{proof}
The main ideas of the proof are essentially the same as in the case of Proposition \ref{P:75}. We need to make more explicit the embedding $\iota: Y(\mfsp_2,\mfsp_2)^{tw} \into Y(\mfsp_2)$ given by $\Si(u) \mapsto \mcT(u-1)\, \mcG\, \mcT(1-u)^t$:
\eqn{
\msk(u+1) &\mapsto -k_{-1}(u)\,e(u)\,f(-u)\,k_{-1}(-u)+k_{-1}(u)\,f(-u)\,k_{-1}(-u)\,e(-u)+k_{-1}(u)\,k_{-1}^{-1}(2-u) \\
\msf(u+1)\, \msk(u+1) &\mapsto  -f(u)\,k_{-1}(u)\,e(u)\,f(-u)\,k_{-1}(-u)+f(u)\,k_{-1}(u)\,f(-u)\,k_{-1}(-u)\,e(-u) \\
& \hspace{3.5cm} + f(u)\,k_{-1}(u)\,k_{-1}^{-1}(2-u)-k_{-1}^{-1}(u+2)\,f(-u)\,k_{-1}(-u) , \\
\msk(u+1)\,\mse(u+1) &\mapsto k_{-1}(u)\,e(u)\,k_{-1}(-u)-k_{-1}(u)\,k_{-1}(-u)\,e(-u).
}
In particular, by \eqref{k-k}, we find
\eqn{
\msf^{(0)} \mapsto 2 f^{(0)} , \qquad \msf^{(1)} \mapsto 2(2f^{(0)} + f^{(0)} k^{(0)} ) , \qquad \mse^{(0)} \mapsto 2 e^{(0)} , \qquad \mse^{(1)} \mapsto 2(2e^{(0)} + k^{(0)} e^{(0)}), \\
\msk^{(0)} \mapsto - k^{(0)}, \qquad \msk^{(1)} \mapsto  e^{(0)} f^{(0)} +  f^{(0)} e^{(0)}  - 2k^{(0)} + \tfrac{1}{2}(k^{(0)})^2 , \hspace{2.7cm}
}
and
$$
\msk^{(2)} \mapsto - k^{(2)} + [e^{(1)}, f^{(0)}] - e^{(0)} f^{(1)} -  f^{(1)}e^{(0)} + k^{(0)}k^{(1)}  +   2(k^{(0)})^2 - \tfrac{1}{2}(k^{(0)})^3 + 2e^{(0)} f^{(0)} - 3k^{(0)}.
$$
Consider the following diagram where $\omega:\mcY_{2} \lra \mcY_{2}$ is given by $\omega(x) =x, \, \omega(J(x)) = J(x) + \frac{1}{2}x, \, x\in \mcY_{2}$: 
$$
\xymatrix{ 
& \mcY^-_{2} \ar[d]^{\Phi^-} \ar[rr]^{\varphi^-} & & \mcY_{2} \ar[d]_{\omega} \\ 
& Y(\mfsp_2,\mfsp_2)^{tw} \ar[r]^{\iota}  &  Y(\mfsp_2) \ar[r]^{\Phi^{-1}} & \mcY_{2} \\
} \qq
$$
\smallskip

\noindent It follows from the defining relation of $Y(\mfsp_2,\mfsp_2)^{tw}$ ((4.4) in \cite{GR}) combined with \eqref{Ysp2sp2:S(u)} that $Y(\mfsp_2,\mfsp_2)^{tw}$ is generated by $\msk^0, \mse^{(0)}, \msf^{(0)}$ and $\msk^{(2)}$. Therefore, the image of the embedding $ \Phi^{-1} \circ \iota$ is the subalgebra of $ \mcY_{2}$ generated by $e,f,h$ and $\Phi^{-1}(\iota(\msk^{(2)}))$. 

The Yangian $\mcY^-_{2}$ is generated by $h,e,f$ and $G(h)$: see \eqref{Y2-:LG}. From the definition of $\omega$, the embedding $\varphi^-$ given in \eqref{Y2-:emb} and after computing that 
\eqn{ 
\Phi^{-1}(\iota(\msk^{(2)})) & = - 8[J(e),J(f)] - 8(J(h) + \tfrac{1}{4}[J(h),C]) - 4 (2h - 2C + Ch) \\
& = -8\omega(\varphi^-(G(h))) - 4( 2h - 2C + Ch) ,
}
(where $C$ is the Casimir element), we can see that the image of the embedding $\omega \circ \varphi^-$ is the same as the image of $ \Phi^{-1} \circ \iota$.  Therefore, $ \mcY^-_{2}$ and $Y(\mfsp_2,\mfsp_2)^{tw}$ are isomorphic and $\Phi^-$ defines an isomorphism of algebras which makes the diagram above commutative.

The images of the elements $\mse^{(2)}$ and $\msf^{(2)}$ are obtained by the repeating the same steps as above. In particular, we find
\eqn{
\mse^{(2)} &\mapsto 2 e^{(2)} - \tfrac{1}{2} \big([k^{(0)}, e^{(1)}] +  e^{(0)} k^{(1)} +3 k^{(1)} e^{(0)}\big) - 2 \big(e^{(0)} f^{(0)} +f^{(0)} e^{(0)} -4 k^{(0)} - (k^{(0)})^2 -5 \big)e^{(0)} , \\
\msf^{(2)} &\mapsto 2 f^{(2)} -\tfrac{1}{2} \big([f^{(0)}, k^{(1)}] + k^{(0)} f^{(1)}+3 f^{(1)} k^{(0)}\big) - f^{(0)}  \big(e^{(0)} f^{(0)} + f^{(0)} e^{(0)} - 6 k^{(0)} - 2 (k^{(0)})^2 - 2\big) ,
}
leading to
\eqn{ 
\Phi^{-1}(\iota(\mse^{(2)})) & = 16\omega(\varphi^-(G(f))) + 8 ( 2f - 4 fh + 2fh^2 - Cf) , \\
\Phi^{-1}(\iota(\msf^{(2)})) & = 16\omega(\varphi^-(G(e))) + 8 ( 2e - 4 he + 2h^2e - Ce) .
}
\end{proof}

We want to end this section by emphasizing an open question regarding the commutation relations of the generators $\msf(u)$, $\mse(u)$, $\msk(u)$. For the corresponding elements of the (non-twisted) Yangian the commutation relations are given by Proposition \ref{P:71}. It would be very interesting to find an equivalent set of relations for the twisted Yangian. An elegant set of such relations would be a good starting point in constructing an analogue of Drinfeld's new presentation for twisted Yangians \cite{Dr3}.

\newpage



\end{document}